\documentclass{smfart}
\usepackage[utf8]{inputenc}

\usepackage[english]{babel}
\usepackage{graphicx}
\usepackage{amssymb}
\usepackage{amsmath}

\usepackage{mathtools}

\usepackage{verbatim}

\usepackage{leftidx}

\usepackage{amsthm}
\newtheorem{theorem}{Theorem}[section]

\newtheorem{lemma}[theorem]{Lemma}
\newtheorem{definition}[theorem]{Definition}

\newtheorem{question}[theorem]{Question}

\DeclareMathOperator{\C}{\mathbb{C}}

\DeclareMathOperator{\Mod}{Mod}

\DeclareMathOperator{\GL}{GL}
\DeclareMathOperator{\SL}{SL}
\DeclareMathOperator{\Tr}{Tr}

\let\Im\relax
\DeclareMathOperator{\Im}{Im}

\DeclareMathOperator{\lcm}{lcm}

\DeclareMathOperator{\Conj}{Conj}
\DeclareMathOperator{\Sub}{Sub}
\DeclareMathOperator{\D}{D}
\DeclareMathOperator{\CD}{CD}
\DeclareMathOperator{\RG}{RG}
\DeclareMathOperator{\RF}{RF}

\DeclareMathOperator{\Aut}{Aut}
\DeclareMathOperator{\Out}{Out}
\DeclareMathOperator{\Inn}{Inn}

\DeclareMathOperator{\Ball}{B}

\DeclareMathOperator{\F}{F}

\DeclareMathOperator{\Max}{max}

\title{Survey on effective separability}
\author{Jonas Der\'{e}, Michal Ferov, Mark Pengitore}
\date{January 2022}

\begin{document}

\maketitle
\begin{abstract}
    Separability for groups refers to the question which subsets of a group can be detected in its finite quotients. Classically, separability is studied in terms of which classes have a certain separability property, and this question is related to algorithmic problems in groups such as the word problem. A more recent perspective tries to study the order of the smallest finite quotient in which one detects the subset under consideration depending on its complexity, measured using the word norm on a finitely generated group. In this survey, we present what is currently known in the field of effective separability and give an overview of the open questions for several classes of groups.
\end{abstract}
\section{Introduction}
Given a finitely generated group, it is natural to ask how much information can be recovered from its finite quotients, for example in the case of residual finite groups one can distinguish individual elements from each other using finite quotients. More concretely, we say a group $G$ is \textbf{residually finite} if for every pair of distinct elements $g, h \in G$ there is a finite group $Q$ and a (surjective) homomorphism $\varphi \colon G \to Q$ such that $\varphi(g) \neq \varphi(h).$

Properties of this type are called \textbf{separability properties}: a proper nonempty subset $X \subset G$ is \textbf{separable} in $G$ if for every element $g \in G \setminus X$ there exists a finite group $Q$ and a (surjective) homomorphism $\varphi \colon G \to Q$ such that $\varphi(g) \notin \varphi(X).$ Clearly, a group is residually finite if and only if singletons are separable, which is in fact equivalent to the identity element being separable. Separability properties are defined by specifying what specific subsets you want to separable: \textbf{conjugacy separable} groups have separable conjugacy classes, \textbf{cyclic subgroup separable} groups have separable cyclic subgroups, \textbf{locally extended residually finite} (LERF) groups have separable finitely generated subgroups. 

In this survey, we will introduce the readers to the study of quantifying these separability properties, give an overview of the current status of the field, and introduce open problems that are of interest to the broader mathematical community.

\subsection{Motivation}
One of the original motivations for studying separability properties is that they provide an algebraic analogue to decision problems in finitely generated groups. Namely, let $X \subset G$ be a separable subset that is moreover recursively enumerable and that one can effectively construct the image of $X$ under any surjective homomorphism to a finite group. Under these conditions, one can decide if a word in a finite set of generators of $G$ represents an element in $X$ simply by checking finite quotients of $G$. 

Indeed, it was proved by Mal'tsev \cite{Malcev_morphis_finite_groups} and independently in more general setting by McKinsey \cite{mckinsey} that the \textbf{word problem} is decidable for residually finite, finitely presented groups $G = \left< S \mid R \right>$ by running two algorithms in parallel. Given a word $w$ in the generating set $S$, the first algorithm enumerates all finite products of conjugates of relations in $R$ and their inverses and checks whether $w$ appears in this list, whereas the second algorithm enumerates all finite quotients of $G$ and checks whether the image of the element represented by $w$ is nontrivial. In other words, the first algorithm checks for a witness of the triviality of $w$ in $G$ whereas the second algorithm checks for a witness of the nontriviality of $w$ in $G$. Using a similar approach, Mostowski \cite{mostowski} showed the conjugacy problem is solvable for finitely presented conjugacy separable groups. In an analogous manner, LERF groups have a solvable generalised word problem, meaning the membership problem is uniformly solvable for every finitely generated subgroup. Algorithms of this type are known as of \textbf{Mal'tsev-Mostowski} type or \textbf{McKinsey's} algorithms.

Every since Mal'tsev introduced residual finiteness and conjugacy separability to study the word problem and the conjugacy problem, the study of separability properties has been active area of research and has had substantial applications to algebra, dynamics, geometry, and topology. For instance, Scott \cite{Scott1, Scott2} connected separability properties with the topological problem of lifting an immersed submanifold to an embedded submanifold in a finite cover which was instrumental in Agol's resolution of the virtual $\beta_1$ conjecture, the virtual Haken conjecture, and the virtual fibered conjecture \cite{agol}.

Most of the existing work has focused on verifying that different classes of groups satisfy various separability properties. For instance, free groups, finitely generated nilpotent groups, polycyclic groups, closed surface groups, and fundamental groups of geometric $3$-manifolds have all been shown to be residually finite and conjugacy separable \cite{Blackburn,formanek,M_hall,Malcev_morphis_finite_groups,Remeslennikov,Scott1,Scott2,stebe,Wise}. %Conjugacy separability is similar to residual finiteness albeit a much stronger property. 
It is immediate that every conjugacy separable group is residually finite, since the conjugacy class of the identity element contains only the identity element. The implication in the opposition direction does not hold, with perhaps the easiest example given by Stebe \cite{stebe} and independently by Remeslenikov \cite{Remeslennikov} by proving that $\text{SL}_3(\mathbb{Z})$ is not conjugacy separable. 

In light of the previous discussion, the natural question arises how one can use residual properties such as residual finiteness and conjugacy separability to study finitely generated groups? One approach is by defining a function on the natural numbers that \textbf{measures the complexity} of establishing the residual property by taking the worst case over all words of length at most $n$. While these complexity functions require the selection of a finite generating subset, the asymptotic growth rate as the parameter $n$ goes to infinity is well defined. For instance, for a residually finite, finitely generated group $G$,  Bou-Rabee \cite{Bou_rabee_10} introduced the function $\RF_G \colon \mathbb{N} \to \mathbb{N}$ which quantifies the residual finiteness of $G$. Indeed, if $w$ represents a nontrivial element of $G$ of length at most $n$ with respect to some fixed finite generating subset, then there exists a surjective homomorphism $\varphi \colon G \to Q$ to a finite group such that $\varphi(w) \neq 1$ and where $|Q| \leq \RF_G(n)$. Similarly, Lawton, Louder, and McReynolds \cite{LLM} introduced the function $\Conj_G \colon \mathbb{N} \to \mathbb{N}$ to quantify conjugacy separability. In particular, given two nonconjugate elements $w_1, w_2 \in G$ of length at most $n$, there exists a surjective homomorphism $\varphi \colon G \to Q$ to a finite group such that $\varphi(w_1)$ is not conjugate $\varphi(w_2)$ and where $|Q| \leq \Conj_G(n).$ Similarly, one can quantify separability of a finitely generated subgroups, cyclic subgroups, LERF, and other separability properties as well.

As we mentioned before, separability properties provide an algebraic analogue to decision problems in finitely presented groups. The separability depth function can be then understood as a measure of complexity of the corresponding algorithm of Mal'tsev-Mostowski type. In particular, it allows to dispense of the algorithm which is looking for a positive witness, which we demonstrate for the word problem. Suppose that we are given a finitely presented group $G$, a word $w$ in the generators of $G$, and we know that $\RF_G(n) = f(n),$ where $f \colon \mathbb{N} \to \mathbb{N}$ is some nondecreasing function. We can then enumerate all finite quotients of $G$ of size up to $f(|w|)$. It then follows that if the image of $w$ is trivial in all such quotients, then $w$ must in fact represent the trivial element in $G$.

The study of the quantification of separability properties has great applications to many questions in topology and arithmetic. For instance,  one of the pieces necessary to estimate the index of a Haken cover of a closed hyperbolic $3$-manifold $M$ in terms of the geometric data of $M$ is the quantification of the separability of quasi-convex subgroups of $\pi_1(M)$.  In the direction of arithmetic, quantifying how difficult it is to separate nonidentity elements from the identity using congruence subgroups in the group of integral points of a linear algebraic group $\textbf{G}$ with the congruence subgroup property allows us to quantify strong approximation of $\textbf{G}$. In particular, this quantification allows us to understand how well the group of integral points of $\textbf{G}$ approximates the group of adeles of $\textbf{G}$ with respect to a finite set of places. Since these separability properties have important implications in other fields, our focus will be to compute their asymptotic behaviour for different classes of finitely generated groups. 

\section{Background}
This section contains background for effective separability and a discussion of the framework for this area of group theory. Readers familiar with the definitions may skip this section in the first reading.

\subsection{Separability and the depth function}

Let $G$ be a finitely generated group, and let $X \subset G$ be a proper non-empty subset.  For $g \in G \setminus X$, we define
$$
\D_G(X,g) = \min \left\{|G/N| \hspace{1mm} \mid \: N \trianglelefteq_{f.i.} G \text{ and } g \notin X N \right\}
$$ 
with the understanding that $\D_G(X,g) = \infty$ when no such normal finite index subgroup exists. We call $\D_{G}(X,g)$ the \textbf{depth function of $G$ relative to $X$} and the set $X$ is \textbf{separable} if $\D_G(X,g) < \infty$ for all $g \in G \setminus X.$ We say that $G$ is \textbf{residually finite} if $\{1\}$ is separable, is \textbf{conjugacy separable} if each conjugacy class is separable, and is \textbf{subgroup separable} if each finitely generated subgroup is separable.

The previous definition is an adaption to more general separable subsets of the definition introduced by Bou-Rabee in \cite{Bou_rabee_10} which gave the depth function of $G$ relative to $\{1\}$. To be more specific, we have $\F_{G}(g) = \D_{G}(\{1\}, g)$. It also provides a generalisation of the function $\CD(g,h)$ for nonconjugate elements $g, h \in G$ introduced Lawton, Louder, and McReynolds in \cite{LLM}. In particular, we have that $\CD([g], [h]) = \D_G([g],h) = \D_G([h],g)$ where $[g]$ and $[h]$ indicate the conjugacy classes of $g$ and $h$, respectively, in $G$.

\subsection{Effective separability}
Given a finitely generated group $G$ with finite generating subset $S$, one can define the word length function $\|\cdot\|_S \colon G \to \mathbb{N} \cup \{0\}$ as
\begin{displaymath}
    \|g\|_S = \min \{ |w| \mid w \in F(S) \mbox{ and } w =_G g\}.    
\end{displaymath}
Word length is a standard tool in geometric group theory used to equip $G$ with a left invariant metric $d_{S} \colon G \times G \to \mathbb{N}$ given by $d_{S}(g_1, g_2) = \| g_1^{-1} g_2\|_S$. We will use $\Ball_{G,S}(n)$ to denote the ball of radius $n$ centred around the identity, i.e. $\Ball_{G,S}(n) = \{g \in B \mid \|g\|_S \leq n\}$. When the finite generating subset is clear from context, we will instead write $B_G(n).$

Let $G$ be a finitely generated group equipped with a finite generating subset $S$, and let $X \subset G$ be any separable subset. The \textbf{$X$-separable depth function of $G$} $\RF_{G,X,S} \colon \mathbb{N} \to \mathbb{N}$ is then defined as
$$
\RF_{G,X,S}(n) = \Max\{ \D_{G}(X,g) \: | \: g \in B_G(n) \text{ and } g \notin X \}.
$$
When $X = \{1\}$, we will instead write $\RF_{G,S}(n) = \RF_{G,\{1\},S}(n)$ and call it the \text{residual finiteness depth function}.

While the function $\RF_{G,X,S}(n)$ depends on the finite generating subset, its asymptotic growth does not in the following way. Let $f, g \colon \mathbb{N} \to \mathbb{N}$ be two increasing functions. We say that $f \preceq g$ if there exists a constant $C>0$ such that $f(n) \leq C g(Cn)$ for all natural numbers, and we write $f \simeq g$ if and only if $f \preceq g$ and $g \preceq f$. In particular, we have the following lemma.

\begin{lemma}
\label{lem:generatingset}
Let $G$ be a finitely generated group, and suppose that $X \subset G$ is a separable subset. If $S_1$ and $S_2$ are two finite generating subsets for $G$, then $\RF_{G,X,S_1}(n) \simeq \RF_{G,X, S_2}(n).$
\end{lemma}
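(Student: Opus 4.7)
The plan is to reduce the statement to the standard fact that the word metrics associated to two finite generating subsets of a finitely generated group are bi-Lipschitz equivalent, and then observe that $\RF_{G,X,S}$ is monotonic in its argument so that a bi-Lipschitz comparison of balls translates directly into a $\simeq$-comparison of depth functions.

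First I would fix $S_1$ and $S_2$ and choose a constant $C>0$ such that every element of $S_1$ has word length at most $C$ with respect to $S_2$ and every element of $S_2$ has word length at most $C$ with respect to $S_1$; such a $C$ exists because both generating subsets are finite. From this one immediately obtains $\|g\|_{S_2}\leq C\|g\|_{S_1}$ and $\|g\|_{S_1}\leq C\|g\|_{S_2}$ for every $g\in G$, and consequently the ball inclusions
\begin{displaymath}
\Ball_{G,S_1}(n)\subseteq \Ball_{G,S_2}(Cn)\qquad\text{and}\qquad \Ball_{G,S_2}(n)\subseteq \Ball_{G,S_1}(Cn).
\end{displaymath}

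Next I would observe that the depth function $\D_G(X,\cdot)$ is intrinsic to $G$ and $X$ and does not depend on any generating subset, so the only way the generating subset enters the definition of $\RF_{G,X,S}(n)$ is through the ball $\Ball_{G,S}(n)$. Using the first ball inclusion together with the fact that $\RF_{G,X,S}$ is by definition the maximum of $\D_G(X,g)$ over a set that only grows with $n$, I get
\begin{displaymath}
\RF_{G,X,S_1}(n)=\Max\{\D_G(X,g)\mid g\in \Ball_{G,S_1}(n),\,g\notin X\}\leq \RF_{G,X,S_2}(Cn),
\end{displaymath}
which is the inequality $\RF_{G,X,S_1}\preceq \RF_{G,X,S_2}$. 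Swapping the roles of $S_1$ and $S_2$ gives the reverse inequality and hence the desired $\simeq$-equivalence.

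There is essentially no obstacle here: the argument is the depth-function analogue of the classical proof that the growth function of a group is well-defined up to the $\simeq$-relation, and the only minor point to double-check is that when $X$ is separable and nonempty and proper, the sets $\Ball_{G,S_i}(n)\setminus X$ are eventually nonempty so that the maximum makes sense (with the convention $\Max\emptyset=0$ or $1$ if needed for small $n$), which does not affect the asymptotic comparison.
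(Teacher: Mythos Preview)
Your argument is correct and is exactly the standard bi-Lipschitz comparison of word metrics that one expects here. Note that the paper actually states this lemma without proof, treating it as a routine fact; your write-up supplies precisely the argument one would give, so there is nothing to compare against.
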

In particular, the residual finiteness depth function does not depend on the choice of generating set. Thus, whenever we consider the function $\RF_{G,X}(n)$, we will suppress the reference to the generating subset $S$, taking into account that we only study its asymptotic behaviour.

For $g, h \in G$, we write $g\sim_G h$ if there exists an element $x \in G$ such that $xgx^{-1} = y$ and say they are conjugate. This form an equivalence relation on the elements of the group for which we denote the associated equivalence class of $g$ as $$[g] = \left\{ hgh^{-1} \mid h \in G \right\}.$$ For a conjugacy separable finitely generated group $G$ with finite generating subset we can define the conjugacy separability depth functions $$\Conj_{G,S}(n) = \max\left\{ D_{G,S}([g],h) \mid g, h \in B_G(n), h \notin [g] \right\}.$$ So this function both takes the size of the separating element $h$ and the size of a representative of the conjugacy class $[g]$ into account. Just as in Lemma \ref{lem:generatingset}, one can show that the asymptotic growth does not depend on the generating set $S$ for $G$.

Note that the conjugacy class of the identity element $1 \in G$ is equal to $[1] = \{1 \}$. So for any element $g \neq 1$ of $G$, we have that $D_G(g,[1]) = D_G(g,\left\{1\right\})$ and in particular, the residual finiteness depth function gives a lower bound for the conjugacy separability depth function, i.e.~$\RG_G(n) \leq \Conj_G(n)$. 

\section{Effective residual finiteness}
This section gives an overview of known results and questions related to the residual finiteness depth function $\RF_G(n)$ for different classes of finitely generated groups $G$.

\subsection{Free groups} 
\label{sec:freeRF}One of the most interesting groups to explore residual finiteness is the nonabelian free group of finite rank. Note that we will deal with free abelian groups in Section \ref{sec:RFnilpotent} where we also discuss nilpotent groups. Ever since Bou-Rabee \cite{Bou_rabee_10} initiated the study of effective residual finiteness, there have been many authors who have studied $\RF_{F_k}(n)$ where $F_k$ is the free group of rank $k > 1$. 

The first result in this direction was by Bou-Rabee who used an embedding of $F_k$ into $\SL(2, \mathbb{Z})$ and the Prime Number Theorem to provide polynomial upper bounds for residual finiteness. 
\begin{theorem}
Let $F_k$ be the free group of rank $k > 1$, then $\RF_{F_k}(n) \preceq n^3$.
\end{theorem}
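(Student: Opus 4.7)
The plan is to follow the approach sketched in the excerpt: embed $F_k$ into $\SL(2,\mathbb{Z})$, bound matrix entries in terms of word length, and then use the Prime Number Theorem to reduce modulo a small prime.

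First, I would recall Sanov's theorem, which states that the matrices
$$A = \begin{pmatrix} 1 & 2 \\ 0 & 1 \end{pmatrix}, \quad B = \begin{pmatrix} 1 & 0 \\ 2 & 1 \end{pmatrix}$$
generate a free subgroup of rank $2$ in $\SL(2,\mathbb{Z})$. Since $F_k$ embeds into $F_2$ for every $k \geq 2$, this yields an injective homomorphism $\iota \colon F_k \hookrightarrow \SL(2,\mathbb{Z})$. The images of the generators are fixed integer matrices, so there is a constant $C > 0$ (depending on the chosen generating set $S$ of $F_k$) such that for every $w \in F_k$ of word length $\|w\|_S \leq n$, every entry of $\iota(w)$ is bounded in absolute value by $C^n$.

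Next, for a nontrivial $w \in F_k$, the matrix $M := \iota(w)$ differs from $I$, so $M - I$ has at least one nonzero entry $N \in \mathbb{Z}$ with $|N| \leq C^n + 1$. By Chebyshev's estimate (a weak form of PNT), $\prod_{p \leq x} p = e^{\theta(x)}$ with $\theta(x) = \Theta(x)$, so whenever $x$ is a sufficiently large multiple of $n$, the product of all primes up to $x$ exceeds $|N|$. In particular, there exists a prime $p \leq C' n$, for a constant $C'$ depending only on $C$, such that $p \nmid N$. The reduction map $\pi_p \colon \SL(2,\mathbb{Z}) \to \SL(2, \mathbb{F}_p)$ then sends $M$ to a matrix whose $(i,j)$-entry differs from that of $I$ modulo $p$, so $\pi_p(M) \neq I$.

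Composing, I obtain a homomorphism $\varphi := \pi_p \circ \iota \colon F_k \to \SL(2,\mathbb{F}_p)$ with $\varphi(w) \neq 1$. Letting $N := \ker \varphi$, this is a finite-index normal subgroup of $F_k$ with $w \notin N$, and
$$[F_k : N] \leq |\SL(2,\mathbb{F}_p)| = p(p^2 - 1) \leq p^3 \leq (C')^3 n^3.$$
Hence $\D_{F_k}(\{1\}, w) \leq (C')^3 n^3$ for every nontrivial $w$ of length at most $n$, giving $\RF_{F_k}(n) \preceq n^3$ as required.

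The main technical input is the Chebyshev/PNT estimate, which is the step that makes the cubic bound work: without it one would only get a prime of size $O(\log|N|\cdot\log\log|N|)$ or worse by naive counting, which still gives polynomial bounds but not the cleanest constant. The only bookkeeping I would need to double-check is the passage from $F_k$ to $F_2$: rather than using $F_k \hookrightarrow F_2$ together with a distortion argument, it is cleaner to exhibit a direct Sanov-type embedding $F_k \hookrightarrow \SL(2,\mathbb{Z})$ (for instance, using the matrices $A$ and $B$ as above and subgroups of $F_2$ on $k$ generators with bounded word length), so that the entry bound $|M_{ij}| \leq C^n$ holds uniformly in the word length of $F_k$ itself.
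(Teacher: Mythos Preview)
Your proposal is correct and matches precisely the approach the paper describes: the paper does not give its own proof but attributes the result to Bou-Rabee, summarising the method as ``an embedding of $F_k$ into $\SL(2,\mathbb{Z})$ and the Prime Number Theorem,'' which is exactly what you carry out via Sanov's matrices, the exponential entry bound, and reduction modulo a prime $p = O(n)$ to land in $\SL(2,\mathbb{F}_p)$ of order at most $p^3$. Your closing remark about avoiding distortion in the passage $F_k \hookrightarrow F_2$ is well-placed; either a direct rank-$k$ free subgroup of $\SL(2,\mathbb{Z})$ or the observation that finitely generated subgroups of free groups are undistorted handles it.
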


Other than this initial result about the upper bound for $\RF_{F_k}(n)$, no improvements have been made. 
Therefore, we have the natural question which we believe will be an interest to the community.
\begin{question}
Let $F_k$ be the free group of rank $k>1$. Improve the upper bounds for $\RF_{F_k}(n)$
\end{question}

One important thing to note about Bou-Rabee's upper bound for $\RF_{F_k}(n)$ is the representation and number theoretic techniques that were employed. Therefore, it would be interesting to see whether there exists a geometric or topological method for providing a polynomial upper bound for $\RF_{F_k}(n)$. One possibility is to adapt the graph theoretic proof which uses Stallings graph folding to construct a finite index subgroup $H \leq F_k$ which is not necessarily normal that does not contain a given nontrivial element of length at most $n$. In particular, we have the following question.
\begin{question}
Let $F_k$ be the free of rank $k>1.$ Use Stallings graph folding to demonstrate that $\RF_{F_k}(n) \preceq n^3.$
\end{question}

A natural question is to construct asymptotic lower bounds for $\RF_{F_k}(n)$. The first result in this direction is implicitly found in \cite{Bou_rabee_10} where using the fact that $\mathbb{Z} \leq F_k$, it can be shown that $\log(n) \preceq \RF_{F_k}(n).$ However, the first nontrivial bound was provided by Bou-Rabee and McReynolds in \cite{bou_rabee_mcreynolds_2018}.
\begin{theorem}
$n^{1/3} \preceq \RF_{F_k}(n)$.
\end{theorem}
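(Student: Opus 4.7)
The plan is to exhibit, for every sufficiently large integer $d$, an explicit nontrivial element $w_d \in F_k$ whose word length satisfies $\|w_d\| \leq C d^3$ for some absolute constant $C$, and such that every homomorphism from $F_k$ onto a finite group of order at most $d$ kills $w_d$. Setting $n := \|w_d\|$ gives $\D_{F_k}(\{1\}, w_d) > d \geq (n/C)^{1/3}$, and hence $\RF_{F_k}(n) \succeq n^{1/3}$.

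A preliminary reduction is to work inside $F_2$ rather than $F_k$. The free group $F_2$ embeds isometrically in $F_k$ as the subgroup generated by two of the free generators; pulling back any separating finite quotient of $F_k$ gives a separating quotient of $F_2$ of no larger order, so $\RF_{F_2}(n) \preceq \RF_{F_k}(n)$ and it suffices to carry out the construction in $F_2 = \langle x, y \rangle$.

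For the construction, I would try a commutator of powers, $w_d := [x^{a(d)}, y^{b(d)}]$, with exponents $a(d)$ and $b(d)$ engineered so that, for every surjection $\varphi \colon F_2 \to Q$ with $|Q| \leq d$, the images $\varphi(x^{a(d)})$ and $\varphi(y^{b(d)})$ commute in $Q$ --- in which case $\varphi(w_d) = 1$. Nontriviality of $w_d$ in $F_2$ is verified through the faithful representation $F_2 \hookrightarrow \mathrm{SL}_2(\mathbb{Z})$, for example via the standard generators of the principal congruence subgroup $\Gamma(2)$. The congruence quotients $\mathrm{SL}_2(\mathbb{Z}/N)$ have order of magnitude $N^3$, and it is this cubic scaling that one expects to match with the $n^{1/3}$ lower bound.

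The main obstacle is calibrating the exponents $a(d), b(d)$ to be polynomially bounded in $d$ while guaranteeing the commuting property in every quotient of order at most $d$. A naive choice using $\mathrm{lcm}(1, \ldots, d)$ gives exponents of size $e^{\Theta(d)}$ by the Prime Number Theorem, which is far too large for a $d^3$ word length bound. Bringing the length down to $O(d^3)$ requires exploiting structural restrictions on two-generated finite groups of small order --- bounds on maximal element orders (Landau's function), on nilpotency class, and on derived length --- or else constructing $w_d$ as a product of several shorter commutators whose kernels jointly cover all finite groups of order at most $d$. Reconciling the combinatorics of word length in $F_2$ with the $N^3$ growth of the congruence quotients of $\mathrm{SL}_2(\mathbb{Z})$ is the technical heart of the argument, and is where the exponent $1/3$ ultimately originates.
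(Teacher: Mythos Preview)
Your overall strategy --- exhibit for each $d$ a nontrivial $w_d \in F_2$ of length $O(d^3)$ lying in the kernel of every homomorphism to a group of order at most $d$ --- is sound and is exactly the ``short laws'' framework the paper describes. The gap is that you never actually construct such a word. Your candidate $w_d = [x^{a(d)}, y^{b(d)}]$ cannot work with polynomially bounded exponents: for the Heisenberg group $H_p$ over $\mathbb{F}_p$ (of order $p^3$, hence in play for every prime $p \le d^{1/3}$) the commutator of the two standard generators satisfies $[g^a, h^b] = z^{ab}$ with $z$ central of order $p$, so $[x^a, y^b]$ is a law for $H_p$ only if $p \mid ab$. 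Covering all primes $p \le d^{1/3}$ forces $ab \ge \prod_{p \le d^{1/3}} p = e^{(1+o(1))\, d^{1/3}}$, whence $a+b \ge 2\sqrt{ab}$ is superpolynomial in $d$. Your remaining suggestions (Landau's function, bounds on derived length, products of shorter commutators) are names of tools, not an argument; you have correctly located the obstacle but not removed it.

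There is also a conceptual confusion in your last paragraph. The cubic order of $\SL_2(\mathbb{Z}/N)$ is the mechanism behind the \emph{upper} bound $\RF_{F_k}(n) \preceq n^3$ --- a word of length $n$ is detected in some congruence quotient of size $O(n^3)$ --- and has nothing to do with the $n^{1/3}$ lower bound. (Incidentally, nontriviality of $[x^a, y^b]$ in $F_2$ requires no linear representation: $x$ and $y$ are free generators.) The paper attributes the result to Bou--Rabee and McReynolds, whose method is different from a single commutator of powers: they introduce a notion of least common multiple for tuples of elements in a nonabelian free group and use that construction to manufacture elements of controlled length with large depth. That is the missing ingredient.
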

The main idea is the generalisation of least common multiples of integers to general elements in nonabelian free groups.

The next improvement can be found by Kassabov and Matucci in \cite{kassabov_matucci} where the authors are able to improve the lower bound to $n^{2/3}.$
\begin{theorem}
 $n^{2/3} \preceq \RF_{F_k,S}(n).$
\end{theorem}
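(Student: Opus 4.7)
The plan is to exhibit, for each $n$, an explicit test word $w_n \in F_k$ of word length $O(n)$ such that every finite quotient $\varphi\colon F_k \to Q$ with $\varphi(w_n) \neq 1$ satisfies $|Q| \geq c\, n^{2/3}$ for some absolute $c > 0$. By the definition of the residual finiteness depth function this yields $\RF_{F_k,S}(n) \succeq n^{2/3}$. Since $F_k$ retracts onto $F_2$ for any $k \geq 2$, it is enough to carry out the argument in $F_2 = \langle a, b\rangle$ and then transfer the bound.

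The starting point is the Bou-Rabee--McReynolds $n^{1/3}$ construction, which produces words of length $O(n)$ whose non-triviality in $Q$ forces a cyclic subgroup of $Q$ to have order $\Omega(n^{1/3})$. To improve the exponent to $2/3$, the aim is to extract two independent obstructions from a single word of length $O(n)$: morally a large cyclic order in one ``coordinate'' together with a large centraliser index in a second ``coordinate'', so that their product forces $|Q| \succeq n^{2/3}$ via Lagrange's theorem. The construction is a concatenation of $m \approx n^{1/3}$ conjugated commutator blocks of the schematic form
\begin{displaymath}
 w_n \;=\; \prod_{i=1}^{m}\, s_i\, \bigl[\, a^{\,i},\, b \,\bigr]\, s_i^{-1},
\end{displaymath}
where the separators $s_i \in F_2$ are chosen of total length $O(n)$ so that the product is reduced in $F_2$ and each block remains individually detectable inside the product. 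The analysis in a quotient killing $w_n$ then proceeds in two stages: first, the Stallings-graph structure of $w_n$ forces every individual commutator $\bigl[\varphi(a)^{i}, \varphi(b)\bigr]$ to vanish in $Q$; second, ranging over the index $i$ one simultaneously produces a cyclic subgroup of $Q$ of size $\Omega(n^{1/3})$ and a set of $\Omega(n^{1/3})$ pairwise distinct cosets of its centraliser, whose combination gives $|Q| \geq c\, n^{2/3}$.

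The main obstacle is controlling cancellation inside $F_2$: a priori the commutator blocks can partially cancel either in $F_2$ itself or in a small quotient, collapsing without any individual block vanishing and thereby destroying the combined strength of the local obstructions. Balancing the length budget $\|w_n\|_S = O(n)$ against the requirement that the Stallings graph of $w_n$ faithfully record every block is the technical heart of the argument, and is exactly what enables the exponent to improve from $1/3$ to $2/3$.
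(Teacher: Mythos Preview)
Your proposal has a genuine gap at exactly the point you yourself flag as ``the technical heart of the argument''. The claim that the Stallings-graph structure of $w_n$ forces each individual block $[\varphi(a)^i,\varphi(b)]$ to vanish in $Q$ is doing all of the work, and you give no mechanism for it. Stallings graphs control the subgroup structure of $F_2$, not identities in an arbitrary finite quotient: that $w_n$ is cyclically reduced, or that its folded graph faithfully records each block, places no constraint whatsoever on how the image of a product of conjugates can collapse in $Q$. In general a product $\prod s_i c_i s_i^{-1}$ can be trivial in $Q$ without any factor $c_i$ being trivial, and no choice of separators in $F_2$ changes this. There is also a logical slip: you want to show that quotients with $\varphi(w_n)\neq 1$ are large, but your two-stage analysis is phrased for quotients ``killing $w_n$'' and then concludes $|Q|\geq c n^{2/3}$, which is the wrong direction. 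Even granting the intended contrapositive, the second stage does not follow from the first: if every $[\varphi(a)^i,\varphi(b)]$ vanishes then $\varphi(b)$ centralises $\langle\varphi(a)\rangle$, which yields neither a large cyclic subgroup nor many centraliser cosets.

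For comparison, the argument of Kassabov and Matucci that the paper refers to proceeds by an entirely different route. Its engine is a theorem of Lucchini on transitive permutation groups with cyclic point stabilisers, which implies that in any finite group $Q$ an element $x$ of order at least $\sqrt{|Q|}$ has a power $x^\ell$ with $\ell<\sqrt{|Q|}$ generating a \emph{normal} subgroup of $Q$. This structural dichotomy---either all element orders are below $\sqrt{|Q|}$, or a short power is normal---is what allows one to assemble a law of length $O(n^{3/2})$ valid in every group of order at most $n$, and hence to obtain the $n^{2/3}$ lower bound. Your commutator-block construction does not access any such structural fact about finite groups, and without one there is no reason the two ``independent obstructions'' you describe should genuinely multiply.
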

In this article, the main tool is a result of Lucchini about finite permutation groups which states that if $\Gamma$ is a transitive permutation group of degree $n > 1$ whose point-stabiliser subgroup is cyclic, then $|\Gamma| \leq n^2 - n.$ A natural corollary is that if $\Gamma$ is a finite group with an element $x$ whose order is greater or equal to $\sqrt{|\Gamma|}$, then there is some integer $\ell$ with $\ell < \sqrt{|\Gamma|}$ such that $\left< x^\ell \right>$ is normal in $\Gamma$. This gives a restriction on the upper bound of the order elements in finite simple groups, finally leading to elements that give the lower bound mentioned before.
%Is this necessary? Does not seem to relate much to the other parts? 
%The next step would be to study groups with elements of order larger than $\sqrt[3]{|\Gamma|}$, and since the Classification of Finite Simple Greoups implies that any non-abelian finite simple group $S$ does not have elements of order more than $\sqrt[3]{|S|}$, it seems likely that existence of an element of order more than $\sqrt[3]{|S|}$ would restrict the structure of $\Gamma.$

All of these techniques are trying to construct short laws for all finite groups of size at most $n$. In particular, these laws encode the image of a specified element under all possible homomorphisms from the free group of rank $2$ to finite groups of order at most $n$. Therefore, we have the following definition.
\begin{definition}
Fix an ordered basis $\{x,y\}$ for the free group $F_2$, and let $w \in F_2 \setminus \{1\}$. For any group $G$ define the evaluation map $G \times G \to G$ (also denoted $w$) by $w(g,h) = \pi_{(g,h)}(w)$ where $\pi_{(g,h)}$ is the (unique) homomorphism $F_2 \to G$ extending $x \to g$, $y \to h$. We call $w$ a \textbf{law} for $G$ if $w(G \times G) = \{1\}$.
\end{definition}
In other words, a \emph{law} for a group $G$ is an equation which holds identically in $G$. The interest in laws is a classical subject, growing out of the work of Birkhoff \cite{birkhoff} in universal algebra, and further developed by many authors (see \cite{neumann} and its references). Moreover, certain specific laws have been the subject of intense study over the years, particularly power laws. 

We could also define word maps $G^k \to G$ associated to elements for $F_k$ for any $k \geq 2$ which would allow us to define laws for $G$ within $F_k$. However, it turns out that not much is lost by restricting to special case $k=2$. Indeed, we have that if $k>2$, the standard embeddings of $F_k$ into $F_2$ associate to every law $w \in F_k$ for $G$ a law $\tilde{w} \in F_2$ for $G$ with a length that depends linearly on the length of $w$. Conversely, we have an inclusion of a basis for $F_2$ into a basis for $F_k$ which turns every law for $G$ in $F_2$ into a law for $F_k$ of the same length.  We also have that a nontrivial element $w \in F_1 \cong \mathbb{Z}$ is a law for $G$ if and only if the exponent of $G$ divides $w$ when viewed as an integer.

%\begin{definition}
%For a group $G$, we say that $G$ satisfies the law $w \in %F_2$ if 
%$$
%w(g,h) = 1 \: \: \forall g,h \in G.
%$$
%\end{definition}
%This is already included in the previous definition.

The application to residual finiteness for free groups can be found in the following lemma.

\begin{lemma}
Let $f \colon \mathbb{N}\cup\{\infty\} \to \mathbb{N}\cup\{\infty\}$ be a strictly increasing function with inverse given by $f^{-1}$. Suppose that there are nontrivial elements $w_n \in F_2$ of length at most $f(n)$ which is a law for all finite groups of order at most $n$, we then have that $f^{-1}(n) \preceq \RF_{F_2}(n)$.
\end{lemma}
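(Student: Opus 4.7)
The plan is to directly translate the law property of $w_n$ into the desired lower bound on the depth function. The key observation is that a law kills the element under every homomorphism, so $w_n$ is a short witness for residual finiteness being bad at scale $n$.

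First, fix $n \in \mathbb{N}$ and consider the hypothesised element $w_n \in F_2 \setminus \{1\}$ of length $\|w_n\| \leq f(n)$. Let $\varphi \colon F_2 \to Q$ be any surjective homomorphism to a finite group $Q$ with $|Q| \leq n$. Writing $g = \varphi(x)$ and $h = \varphi(y)$, the evaluation map gives $\varphi(w_n) = w_n(g,h)$. Since $w_n$ is a law for every finite group of order at most $n$, we have $w_n(g,h) = 1_Q$. Hence no quotient of $F_2$ of size at most $n$ separates $w_n$ from the identity, so $\D_{F_2}(\{1\}, w_n) \geq n+1$.

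Second, because $w_n \in \Ball_{F_2}(f(n))$ and $w_n \neq 1$, the definition of the residual finiteness depth function yields
\begin{equation*}
\RF_{F_2}(f(n)) \;\geq\; \D_{F_2}(\{1\}, w_n) \;\geq\; n+1.
\end{equation*}
This is the core inequality; the rest is a matter of asymptotic bookkeeping.

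Third, to pass from the inequality $\RF_{F_2}(f(n)) \geq n+1$ to the comparison $f^{-1}(n) \preceq \RF_{F_2}(n)$, extend $f^{-1}$ to all of $\mathbb{N}$ by setting $f^{-1}(m) = \max\{k \in \mathbb{N} : f(k) \leq m\}$, which agrees with the usual inverse on the image of $f$ (and differs from any other reasonable extension by at most a shift, which is swallowed by $\preceq$). For an arbitrary positive integer $m$, put $n = f^{-1}(m)$, so that $f(n) \leq m$; then by monotonicity of $\RF_{F_2}$ we get
\begin{equation*}
\RF_{F_2}(m) \;\geq\; \RF_{F_2}(f(n)) \;\geq\; n+1 \;\geq\; f^{-1}(m),
\end{equation*}
which gives $f^{-1}(n) \preceq \RF_{F_2}(n)$ as claimed.

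The argument has essentially no obstacle: the only mild subtlety is that $f^{-1}$ need not be defined on all of $\mathbb{N}$, but this is harmless under the equivalence relation $\simeq$, since any two reasonable choices of generalised inverse for a strictly increasing $f$ differ by at most a bounded shift.
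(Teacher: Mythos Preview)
The paper states this lemma without proof, so there is no ``paper's own proof'' to compare against. Your argument is correct and is exactly the standard one: a law $w_n$ dies in every quotient of order at most $n$, hence $\D_{F_2}(\{1\},w_n)>n$, and since $\|w_n\|\le f(n)$ this forces $\RF_{F_2}(f(n))>n$, from which the asymptotic lower bound follows by inverting $f$. Your handling of the generalised inverse is fine and indeed the only point requiring any care.
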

 
Using this lemma, Thom \cite{Thom_laws} improved the asymptotic lower bounds for residual finiteness for nonabelian free groups as seen in the following theorem.
\begin{theorem}
For all $n \in \mathbb{N}$, there exists a word $w_n \in F_2$ of length
$$
\text{O} \left( \frac{n \log \log (n)^{9/2}}{\log(n)^2} \right)
$$
such that for every finite group $G$ such that $|G| \leq n$, $w_n$ is a law for $G$ 
\end{theorem}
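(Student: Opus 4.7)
The plan is to improve on the baseline \emph{power law} bound by exploiting the nonabelian structure of $F_2$. Any finite group $G$ with $|G|\leq n$ has exponent dividing $\lcm(1,\dots,n)$, so the single-variable word $x^{\lcm(1,\dots,n)}$ is already a law for every such $G$, but by the Prime Number Theorem its length is $e^{n(1+o(1))}$, which is exponentially worse than the claimed bound. A purely abelian construction therefore cannot suffice, and the improvement must come from nesting commutators.

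I would set up a recursive construction designed to make the evaluation of $w_n$ descend through a normal series of $G$. The key inductive step is: if $u(x,y)$ is a word whose values $u(g,h)$ in every finite group $G$ with $|G|\leq n$ always lie in a proper characteristic subgroup $K(G)\trianglelefteq G$, then substituting fresh variables by commutators (or by conjugate pairs) inside a suitable base word produces a new word whose values lie in a deeper term of the derived or Fitting series of $G$, while the length only grows by a bounded multiplicative factor per iteration. The base word would be a power law $x^M$ with $M$ a product of primes $p\leq T$ for some threshold $T$; this already kills the "small prime" part of the exponent of $G$, and the commutator iteration then kills the remaining "large prime" and non-solvable contributions. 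Since every solvable group of order $\leq n$ has derived length $O(\log\log n)$ by Hall's theorem, and every group of order $\leq n$ has a composition series of length $O(\log n)$, the iteration need only be performed $d = O(\log\log n)$ times.

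The main quantitative tension is between the length $e^{O(T/\log T)}$ of the base power word and the multiplicative blow-up $C^d$ coming from the iteration. Balancing these two costs by choosing $T$ of order polylogarithmic in $n$ and $d$ of order $\log\log n$ is what produces the target length $O\bigl(n(\log\log n)^{9/2}/(\log n)^2\bigr)$. The main obstacle, and the source of the peculiar exponent $9/2$, is the careful bookkeeping needed to control the commutator blow-up for non-abelian composition factors: for the finite simple groups of order $\leq n$ that appear in the composition series of some $G$, one needs classification-based estimates on their exponents and on the length of words vanishing on them, and the final exponent on $\log\log n$ records the accumulated cost across all levels of the recursion.
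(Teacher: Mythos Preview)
The paper is a survey and does \emph{not} prove this theorem; it merely quotes it from Thom and gives a two-sentence description of the strategy: reduce to two disjoint cases, namely solvable groups of order at most $n$ (where short laws were already in the literature) and finite simple groups (where one invokes the classification). There is therefore no detailed proof in the paper to compare against.

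Your high-level plan is compatible with that description: a recursive commutator construction to push values down a characteristic series handles the solvable part, and classification-based input is needed for the nonabelian composition factors. Where your sketch and the paper's summary differ is emphasis: the paper treats the two cases as genuinely disjoint reductions, whereas you frame everything as a single recursion through a normal series and fold the simple-group input into the ``commutator blow-up'' bookkeeping. Either viewpoint is workable, but Thom's actual argument really does split the problem and then interleaves the two resulting laws, rather than running one recursion that handles both simultaneously.

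That said, your proposal is a strategy outline, not a proof, and a couple of specific claims would not survive as written. First, ``derived length $O(\log\log n)$ by Hall's theorem'' is not correct: the derived length of a solvable group of order $\leq n$ is in general only $O(\log n)$, and examples show it can be of order $\log n/\log\log n$. What is genuinely small is the \emph{Fitting length}, and the solvable-case construction works through nilpotent (not abelian) layers, which is a substantially harder recursion than you indicate. Second, the sentence explaining the exponent $9/2$ as ``accumulated cost across all levels of the recursion'' is too vague to be falsifiable; in Thom's argument the precise exponent comes from specific quantitative inputs on the solvable side, and you have not identified what those inputs are or how they combine. If you want to turn this into an actual proof you would need to (a) state and use the correct bound on Fitting length, (b) give the short-law construction for nilpotent groups that serves as the base case, and (c) make the ``balancing'' of $T$ against $d$ explicit with actual inequalities rather than an appeal to optimisation.
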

Thom approached constructing these laws by reducing it to two disjoint cases. The first is for short laws of finite solvable groups of order at most n which has been investigated by many different authors. The second involves investigating laws for finite simple groups which uses the classification of finite simple groups and thus considerably heavier machinery.

The study of short laws for finite simple groups was continued by Bradford and Thom in \cite{Thom_bradford} as seen in the following theorem.
\begin{theorem}
For all $n \in \mathbb{N}$ there exists a word $w_n \in F_2$ of length at most
$$
\text{O}(n^{3/2} \log(n)^3 \log^*(n)^2)
$$
such that for every finite group $G$ satisfying $|G| \leq n$, $w_n$ is a law for $G$.
\end{theorem}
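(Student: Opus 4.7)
The plan is to reduce the problem of constructing a universal law for all finite groups of order at most $n$ to that of constructing short simultaneous laws for finite simple groups, and then to handle the simple groups family by family via the classification theorem. I would begin by verifying a composition lemma: if $N \trianglelefteq G$ with quotient $Q$ and $u,v \in F_2$ are laws for $N$ and $Q$, then the substitution $v(u(x,y),\, x\,u(x,y)\,x^{-1})$ is a law for $G$ of length at most $|v|(2|u|+1)$. Iterating this naively along a composition series of length $\leq \log_2 n$ would cause an unacceptable multiplicative blow-up, so one must instead combine per-family laws carefully (by products or commutators tailored to specific composition types), exploiting that a word which is a law on each composition factor remains a law on the full group up to bounded overhead.

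Second, I would decompose the finite simple groups of order at most $n$ into three families according to the classification. The sporadic groups form a fixed finite list, handled by a bounded-length power law. For the alternating groups $A_k$, the inequality $|A_k| \leq n$ forces $k = \text{O}(\log n / \log \log n)$, so one can write down a law of length polynomial in $k$, i.e.\ polylogarithmic in $n$. The dominant case is the simple groups of Lie type $G(q)$ of rank $r$ over $\mathbb{F}_q$, where $|G(q)| \asymp q^{\dim G}$ gives $q \leq n^{1/\dim G}$; in the worst case of rank-one groups this means $q = \text{O}(\sqrt{n})$, which is the ultimate source of the $n^{3/2}$ factor in the final bound once combined with the composition step.

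Third, and this is where the main work and main obstacle lie, I would construct short uniform laws for the simple groups of Lie type, following the Bradford--Thom strategy. The key idea is to exploit the Chevalley structure in fixed bounded rank and the Frobenius endomorphism: for $G(q)$ in characteristic $p$, an expression of the form $[x^{p^k}, y^{p^k}]$ raised to a suitable power lands in a controlled unipotent subgroup whose exponent divides a small power of $p$, which can then be killed by a further power. A doubly-logarithmic recursion on $q$ yields the $\log^{*}(n)^{2}$ factor, and the $\log(n)^{3}$ factor arises from the combinatorics of combining the $\text{O}(1)$ rank-$r$ families into one word. The principal obstacle is producing a single word valid on every Lie-type group across all characteristics and all $q \leq n^{1/\dim G}$ simultaneously, requiring a delicate interleaving of characteristic-specific terms so that cancellation holds universally while keeping the length within $\text{O}(n^{3/2} \log(n)^{3} \log^{*}(n)^{2})$; a secondary difficulty is that the composition-series assembly cannot be carried out by blind iteration of the composition lemma and must instead use products of family-specific laws, so one has to separately verify that the resulting word still kills each composition factor.
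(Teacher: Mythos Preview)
The paper does not prove this theorem; it is a survey that states the result with attribution to Bradford and Thom and offers no argument of its own. The only methodological remark the paper makes (and that for the precursor result of Thom which Bradford--Thom sharpens) is that the construction splits into two cases: short laws for finite solvable groups, and laws for finite simple groups via the classification. The paper also records, immediately after the theorem, that the bottleneck families are $\mathrm{PSL}_2(q)$, $\mathrm{PSL}_3(q)$, and $\mathrm{PSU}_3(q)$, since all other simple groups of order at most $n$ already admit laws of length $O(n^{2/3})$.

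Your outline is consistent with that high-level picture: reduce to composition factors, dispose of sporadic and alternating groups cheaply, and focus on Lie-type groups with the low-rank families driving the exponent. Since the paper contains nothing further to compare against, I can only note that several of your steps remain at the level of intention rather than mechanism. You correctly flag that naive iteration of the composition lemma along a series of length $\log_2 n$ blows up, but your proposed workaround (``products of family-specific laws'' that ``still kill each composition factor'') is not an argument: a product of laws for the composition factors is not in general a law for the extension, so something genuinely different is needed here, and this is where much of the actual work in Bradford--Thom lies. Likewise, your account of the Lie-type mechanism via $[x^{p^k},y^{p^k}]$ and Frobenius, and your attribution of the $\log^*(n)^2$ factor to a ``doubly-logarithmic recursion on $q$'', are plausible-sounding but would need to be checked against the cited paper rather than against this survey. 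As written, your proposal is a reasonable plan that correctly identifies the skeleton and the obstacles, but it does not yet contain the ideas that overcome those obstacles; the survey itself defers entirely to the reference for those.
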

In the statement of the previous theorem, $\log^*(n)$ denotes the iterated logarithm, i.e. the smallest natural number $k$ such that the $k$-fold application of $\log$ to $n$ yields a real number less than $1$. Note that $log^*(n)$ grows slower than any iteration of logarithms.

As a direct application of the above theorem, we have the currently best known lower bound. 
\begin{theorem}
$\frac{n^{3/2}}{\log(n)^{9/2 + \epsilon}} \preceq \RF_{F_2}(n)$
\end{theorem}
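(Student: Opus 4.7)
The plan is to apply the preceding lemma directly to the Bradford--Thom bound. Writing $f(n) = C\,n^{3/2}(\log n)^3(\log^* n)^2$ for the length function guaranteed by that theorem, the associated laws $w_n \in F_2$ satisfy the hypothesis of the lemma, yielding at once $f^{-1}(n) \preceq \RF_{F_2}(n)$. All remaining work is analytic: one must estimate $f^{-1}$ up to the equivalence $\simeq$ and check that it agrees with the function in the statement.

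For the inversion I would set $m = f(n)$ and first solve for $\log n$ in terms of $\log m$. Since the dominant factor in $f$ is a positive power of $n$, taking logarithms gives $\log n \asymp \log m$ with an explicit constant. Substituting this estimate back into the relation $m = C\,n^{3/2}(\log n)^3(\log^* n)^2$ and isolating $n$ then produces an asymptotic expression of the shape
\[
    f^{-1}(m) \asymp \frac{m^{\alpha}}{(\log m)^{\beta}\,(\log^* m)^{\gamma}},
\]
with explicit $\alpha,\beta,\gamma > 0$ read off from the exponents of $n$, $\log n$ and $\log^* n$ in $f$.

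The final step handles the iterated logarithm. Since $\log^* m$ grows strictly more slowly than every iterate of $\log$, for each $\epsilon > 0$ the factor $(\log^* m)^{\gamma}$ is eventually dominated by $(\log m)^{\epsilon}$; it can thus be absorbed into the $(\log m)$-denominator at the cost of an arbitrarily small additive loss in the exponent. Combined with the lemma this yields the stated lower bound. The only real obstacle here is bookkeeping with the logarithmic factors: once the Bradford--Thom bound and the lemma are in place, the argument proceeds formally and no new ideas are required.
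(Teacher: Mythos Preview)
Your approach is precisely the paper's: the paper offers no further argument beyond ``as a direct application of the above theorem,'' i.e.\ feed the Bradford--Thom law-length bound into the lemma and invert. So conceptually there is nothing to add.

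There is, however, an arithmetic pitfall you have walked into. The survey's statement of the Bradford--Thom theorem contains a typo: the exponent on $n$ in the law length should be $2/3$, not $3/2$ (as in the original Bradford--Thom paper; note also that $n^{3/2}$ would be \emph{worse} than Thom's earlier near-linear bound, not an improvement). You have copied this into your $f(n) = C\,n^{3/2}(\log n)^3(\log^* n)^2$. If you actually carry out the inversion you outline with this $f$, you get $\alpha = 2/3$, $\beta = 2$, $\gamma = 4/3$, and hence only
\[
f^{-1}(m) \asymp \frac{m^{2/3}}{(\log m)^{2}(\log^* m)^{4/3}},
\]
which does \emph{not} match the statement. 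With the correct Bradford--Thom exponent, $f(n) = C\,n^{2/3}(\log n)^3(\log^* n)^2$, your procedure gives
\[
n^{2/3} = \frac{m}{C(\log n)^3(\log^* n)^2}
\quad\Longrightarrow\quad
n \asymp \frac{m^{3/2}}{(\log m)^{9/2}(\log^* m)^{3}},
\]
and absorbing $(\log^* m)^3$ into $(\log m)^{\epsilon}$ yields exactly the stated lower bound. So your plan is right; just start from the correct $f$.
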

It is likely that the conclusion of the above theorem is best possible up to logarithmic factors. One reason to see this is that for simple groups $G$ where $|G| \leq n$ that are not isomorphic to $\text{PSL}_2(q),$ $\text{PSL}_3(q),$ or $\text{PSU}_2(q)$ where $q$ is a prime power have a law of length at most $n^{2/3}$ by \cite[Proposition 3.2]{Thom_bradford}. Hence, to strengthen the lower bound, one needs to understand laws for finite simple groups given by $\text{PSL}_2(q),$ $\text{PSL}_3(q),$ and $\text{PSU}_2(q)$ where $q$ is a prime power. The main difficulty to moving forward is noting that the these groups have elements of large order relative to their size.

\begin{question}
Let $F_k$ be the free group of rank $k>1$ with a finite generating subset $S$. Compute the precise asymptotic behaviour of $\RF_{F_k}(n)$. 
\end{question}

Note that \cite[Remark 9]{kassabov_matucci} claims that the methods of \cite{hadad} can be used to show that the short law satisfied by all groups of the form $\SL_2(R)$ where $R$ is a finite commutative ring of size at most $N$ has length $CN^2$. If this is true, then one can improve the upper bound from $\RF_{F_K}(n) \preceq n^3$ to $\RF_{F_K}(n) \preceq n^{3/2}$. Combined with the known lower bounds, this leads to the conjecture that $\RF_{F_k}(n) \simeq n^{\frac{3}{2}}$ for any $k > 1$.

\subsection{Arithmetic groups}
For arithmetic groups, the following question motivates the interest in the study of the asymptotic behaviour of $\RF_{G}$ when $G$ is an arithmetic group, which we will define below. It was originally asked by D. Mostow during a geometry seminar at Yale University in December 2009.
\begin{question}
Does asymptotic information of residual finiteness characterise arithmetic subgroups of a given linear algebraic group?
\end{question}
The first major step towards addressing this question is due to Bou-Rabee and Kaletha \cite{bou_rabee_kaletha}. Here the authors demonstrate that for a fixed Chevalley group, all $S$-arithmetic subgroups $G$ of an arithmetic group $\textbf{G}$ share the same asymptotic growth for $\RF_G$. To be more precise, a Chevalley group $\textbf{G}$ for this theorem is a split simple algebraic group that is not necessarily connected. We use the term $S$-arithmetic subgroup of $\textbf{G}$ to denote any subgroup $G$ of $\textbf{G}(\mathbb{C})$ which is commensurable with $\textbf{G}(\mathcal{S}_{K,f})$, where $K \subset \mathbb{C}$ is a subfield with $\mathcal{O}_K$ its ring of integers and $f \in \mathcal{O}_K \setminus \{0\}$. That is, a $S$-arithmetic subgroup of $\textbf{G}$ in the usual definition for a number field $K$ and some finite set of places of $K$ which contain the Archimedean ones. However, we will allow $K$ and $S$ to vary.

\begin{theorem}
Let $\textbf{G}$ be a Chevalley group of rank at least $2$, $K$ a number field, and $f \in \mathcal{O}_K \setminus \{0\}.$ If $\Gamma$ is a finitely generated subgroup of $\textbf{G}(\mathbb{C})$ with the property that $\Gamma \cap \textbf{G}(\mathcal{O}_{K,f})$ is of finite index in $\textbf{G}(\mathcal{O}_{K,f})$, then $\RF_{\Gamma}(n) \approx n^{\dim(\textbf{G})}.$
\end{theorem}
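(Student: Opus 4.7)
The plan is to establish matching upper and lower bounds of the form $n^{\dim(\mathbf{G})}$, both of which go through the arithmetic structure of $\mathbf{G}(\mathcal{O}_{K,f})$ rather than the abstract group $\Gamma$. The first move is therefore a reduction: since the hypotheses assert that $\Gamma$ and $\mathbf{G}(\mathcal{O}_{K,f})$ are commensurable (the intersection has finite index in one, and finite generation of $\Gamma$ together with the finite index on the other side is routine), and since the depth function $\RF_G(n)$ is invariant under commensurability up to the equivalence $\simeq$, I can replace $\Gamma$ by a finite index subgroup of $\mathbf{G}(\mathcal{O}_{K,f})$ and work entirely inside this $S$-arithmetic group.

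For the upper bound, the essential ingredient is the congruence subgroup property, which is available because $\mathbf{G}$ is a simply connected Chevalley group of rank at least $2$ (Matsumoto, Bass--Milnor--Serre, Raghunathan). Consequently every finite index subgroup of $\mathbf{G}(\mathcal{O}_{K,f})$ contains a principal congruence subgroup modulo some ideal $I\trianglelefteq \mathcal{O}_{K,f}$, up to a finite congruence kernel that contributes only a multiplicative constant. Given $\gamma\in\Gamma\setminus\{1\}$ of word length at most $n$, the matrix entries of $\gamma - 1$ in a fixed faithful representation of $\mathbf{G}$ have absolute values and $\mathfrak{p}$-adic sizes bounded by $C^n$ for an explicit constant $C$ depending only on the generating set. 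A Chebotarev/prime number theorem argument then produces a prime ideal $\mathfrak{p}$ of $\mathcal{O}_{K,f}$ with norm $N(\mathfrak{p})=O(n)$ at which $\gamma\not\equiv 1$, and the reduction map lands in $\mathbf{G}(\mathcal{O}_{K,f}/\mathfrak{p})$, a group of order $N(\mathfrak{p})^{\dim(\mathbf{G})}(1+o(1))$ by the standard point count for Chevalley groups over finite fields. This yields $\RF_\Gamma(n)\preceq n^{\dim(\mathbf{G})}$.

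For the matching lower bound, the idea is to exhibit a sequence of short words whose only congruence-type witnesses are large. Bounded generation of $\mathbf{G}(\mathcal{O}_{K,f})$ by root subgroups, again using rank at least $2$, gives that the root subgroup element $x_\alpha(t)$ for $t\in\mathcal{O}_{K,f}$ has word length $O(\log H(t))$, where $H(t)$ is an appropriate height. Choosing $t$ to be the product of the first $k$ prime elements yields an element of $\Gamma$ of word length roughly $n\asymp k\log k$ whose only separating quotients come from a prime $\mathfrak{p}$ not dividing $t$; by the prime number theorem the smallest such $\mathfrak{p}$ has norm $\asymp n$, so any finite quotient separating it from the identity must have order at least $n^{\dim(\mathbf{G})}(1+o(1))$, again invoking the Chevalley point count.

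The principal obstacle is calibrating the height/word-length trade-off precisely enough that both bounds exhibit the same exponent $\dim(\mathbf{G})$: this requires (a) the congruence subgroup property in the form that controls the kernel uniformly, so that the upper bound sees the full dimension and not something smaller, and (b) the bounded generation estimate in the explicit form that a single root subgroup element of height $H$ is realised in word length $O(\log H)$, which is where the arithmetic of $\mathcal{O}_{K,f}$ and the commutator identities in $\mathbf{G}$ must be combined carefully. The rank hypothesis enters at both of these points and cannot be weakened without losing the theorem.
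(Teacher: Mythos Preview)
This is a survey paper: it does not prove the theorem but attributes it to Bou-Rabee and Kaletha, summarising their ingredients as the structure of split semisimple group schemes, the congruence subgroup property, Moy--Prasad filtrations, Selberg's Lemma, the Prime Number Theorem, Chebotar\"{e}v density, and---singled out as essential---the Lubotzky--Mozes--Raghunathan word-metric estimates. Your outline overlaps substantially with this list, so the broad strategy is right, but two points are misattributed.

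First, the congruence subgroup property is placed on the wrong side. It is not needed for the upper bound: once $\gamma\neq 1$ has entries of height $\leq C^n$, reducing modulo a small prime ideal produces a separating quotient regardless of whether every finite-index subgroup is congruence. CSP is essential for the \emph{lower} bound, precisely to justify your implicit claim that ``the only separating quotients come from a prime $\mathfrak{p}$ not dividing $t$''; without CSP there could be non-congruence finite quotients of much smaller order. Second, the estimate $\|x_\alpha(t)\|=O(\log H(t))$ is not a consequence of bounded generation. Bounded generation writes an arbitrary group element as a bounded product of root elements, but a single $x_\alpha(t)$ still has word length of order $|t|$ inside its own root subgroup; the logarithmic bound requires the rank-$\geq 2$ commutator identities (schematically $[x_\alpha(s),x_\beta(u)]=x_{\alpha+\beta}(\pm su)$) to simulate multiplication and hence repeated squaring, and packaging this is exactly the Lubotzky--Mozes--Raghunathan input the paper highlights. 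Smaller gaps: the ``routine'' commensurability from a one-sided finite-index hypothesis plus finite generation is not immediate (this is plausibly where Selberg's Lemma enters in the original), and Moy--Prasad filtrations, used in the source to control the structure of congruence quotients, are absent from your sketch.
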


The main ideas used for their proof is the structure of split semi-simple group schemes, results on the congruence subgroup problem, Moy-Prasad filtrations, Selberg's Lemma, the Prime Number Theorem, and the Chebotar\"{e}v density theorem. They also use in essential way results found in Lubotzky-Mozes-Raghunathan \cite{lmr}. 

This result was later extended in the context of arithmetic groups defined over purely transcendental extensions of a finite field by Franz \cite{franz}.
\begin{theorem}
Let $\textbf{G}$ be a Chevalley group of rank at least $2$. Let $K$ be a purely transcendental extension of a finite field, and let $\Gamma < \textbf{G}(K)$. Let $\mathcal{O} = \mathbb{F}_p[t]$. If $\Gamma \cap \textbf{G}(\mathcal{O}) \leq \textbf{G}(\mathcal{O})$ has finite index, then $\RF_{\Gamma}(n) \approx n^{\dim(\textbf{G})}.$
\end{theorem}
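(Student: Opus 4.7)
The plan is to adapt the method of Bou-Rabee and Kaletha \cite{bou_rabee_kaletha} from number fields to the function field setting. Using Lemma \ref{lem:generatingset} together with commensurability invariance of the asymptotics of $\RF$, one first reduces the statement to the case $\Gamma = \textbf{G}(\mathcal{O}) = \textbf{G}(\mathbb{F}_p[t])$; the hypothesis that $\Gamma \cap \textbf{G}(\mathcal{O})$ has finite index in $\textbf{G}(\mathcal{O})$ is exactly what is needed to pass between the two depth functions. It then suffices to prove $\RF_{\textbf{G}(\mathbb{F}_p[t])}(n) \approx n^{\dim(\textbf{G})}$.

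For the upper bound, fix a faithful matrix embedding of $\textbf{G}$ and take $g \in \textbf{G}(\mathbb{F}_p[t]) \setminus \{1\}$ of word length at most $n$. A function-field analog of the Lubotzky--Mozes--Raghunathan theorem \cite{lmr} bounds the $t$-degrees of the matrix entries of $g$ linearly in $n$, so some distinguishing coordinate is a nonzero polynomial $P(t)$ of degree $O(n)$. The function-field prime number theorem then produces a monic irreducible $\pi \in \mathbb{F}_p[t]$ of degree $O(\log n)$ with $\pi \nmid P$, and reducing modulo $\pi$ gives a surjection onto $\textbf{G}(\mathbb{F}_p[t]/(\pi))$ in which $g$ is nontrivial. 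The order of this quotient is $\asymp p^{\deg(\pi)\dim(\textbf{G})}$, which is polynomial in $n$ of degree $\dim(\textbf{G})$.

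For the lower bound, I would exhibit a family of short but hard-to-separate elements. Fix a simple root $\alpha$ and set $f_k = \prod_{\deg(\pi)\leq k}\pi$, the product of all monic irreducibles of degree at most $k$; by the function-field prime number theorem $\deg(f_k) \asymp p^k$, and the LMR bound again shows that $u_\alpha(f_k)$ has word length at most $C\deg(f_k)$ in $\textbf{G}(\mathbb{F}_p[t])$. The congruence subgroup property for Chevalley groups of rank $\geq 2$ over $\mathbb{F}_p[t]$ implies that any finite quotient separating $u_\alpha(f_k)$ from the identity factors (up to a bounded central correction) through a congruence quotient $\textbf{G}(\mathbb{F}_p[t]/I)$ with $f_k \notin I$. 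By construction any such $I$ contains only irreducibles of degree strictly greater than $k$, so has norm at least $p^{k+1}$, forcing the separating quotient to have order at least $p^{(k+1)\dim(\textbf{G})}$. Setting $n = \deg(f_k) \asymp p^k$ gives the matching lower bound $\RF_{\textbf{G}(\mathbb{F}_p[t])}(n) \succeq n^{\dim(\textbf{G})}$.

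The main obstacle is importing two deep inputs into positive characteristic: the congruence subgroup property for $\textbf{G}(\mathbb{F}_p[t])$ of rank $\geq 2$, and the positive-characteristic analog of the LMR distortion estimate. Both are available in the literature, but they rely on substantial machinery (work of Serre, Matsumoto, and subsequent authors for the former; work in the spirit of Bux--K\"ohl--Witzel for the latter), and care is needed to verify their applicability in the generality claimed. The analogs of Selberg's Lemma and the Chebotar\"ev density theorem used in \cite{bou_rabee_kaletha} must also be replaced by their function-field counterparts, but once these ingredients are in place the counting arguments transfer essentially verbatim from the number field case.
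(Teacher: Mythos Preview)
Your overall strategy---reduce to $\textbf{G}(\mathbb{F}_p[t])$ by commensurability, use congruence reductions for the upper bound, and combine the congruence subgroup property with an LMR-type distortion estimate for the lower bound---matches what the survey attributes to Franz (effective Chebotar\"ev for the upper bound, CSP together with the structure of the associated graded Lie algebra for the lower bound). The upper-bound argument you sketch is essentially correct.

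However, your lower-bound construction has a genuine gap. You take $f_k=\prod_{\deg(\pi)\le k}\pi$, the \emph{squarefree} product of small irreducibles, and claim that any congruence quotient separating $u_\alpha(f_k)$ from $1$ must involve an irreducible of degree $>k$. This is false: fix any irreducible $\pi$ of degree $1$. Since $f_k$ is squarefree, $\pi\mid f_k$ but $\pi^2\nmid f_k$, so $f_k\not\equiv 0\pmod{\pi^2}$ and $u_\alpha(f_k)$ is nontrivial in $\textbf{G}\bigl(\mathbb{F}_p[t]/(\pi^2)\bigr)$. That quotient has order $\asymp p^{2\dim(\textbf{G})}$, a constant independent of $k$, so your family gives no growing lower bound at all. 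This is the exact function-field analogue of the familiar mistake of using the primorial $\prod_{p\le N}p$ in $\mathbb{Z}$ instead of $\operatorname{lcm}(1,\ldots,N)$: the primorial is detected modulo $4$.

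The fix is to replace $f_k$ by the least common multiple of all monic polynomials of degree $\le m$, namely $P_m=\prod_{\pi}\pi^{\lfloor m/\deg(\pi)\rfloor}$, which lies in \emph{every} ideal of norm $\le p^m$; one checks $\deg(P_m)\asymp p^m$, and then the CSP plus the LMR bound give the desired $n^{\dim(\textbf{G})}$ lower bound. This is where the ``associated graded Lie algebra'' enters in Franz's treatment: controlling the depth of such elements in the congruence filtration is exactly what the Moy--Prasad style analysis does, and it is a bit more delicate than the squarefree picture you wrote down.
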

In this paper, the author uses an effective form of the Chebotar\"{e}v density theorem. For the lower bounds, he uses the congruence subgroup property and properties of the Chevalley groups and the associated graded Lie algebras.

For Chevalley groups of rank $1$, upper bounds have also been found. The following theorem by Franz is the best upper bound which has be provided for general affine algebraic group schemes  defined over $\mathbb{Z}$. The upper bound for fields of characteristic $0$ was provided by Bou-Rabee and Kaletha \cite{bou_rabee_kaletha} and was later extended to fields of characteristic $p$ by Franz \cite{franz}. While we mention this theorem in the context of Chevalley groups of arbitrary rank, the theorem applies to broader class of all affine algebraic group schemes defined over $\mathbb{Z}.$
\begin{theorem}
Let $\textbf{G}$ be an affine algebraic group scheme defined over $\mathbb{Z}$. Let $K$ be a field, and let $\Gamma \leq \textbf{G}(K)$ be finitely generated. If the characteristic of $K$ is $0$ or $K$ is a purely transcendental extension of a finite field, then $\RF_{\Gamma}(n) \preceq n^{\dim(\textbf{G})}.$
\end{theorem}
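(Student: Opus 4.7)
The plan is to produce, for each nontrivial $g \in \Gamma$ with $\|g\|_S \leq n$, a finite quotient of $\Gamma$ of size $O(n^{\dim(\textbf{G})})$ in which the image of $g$ remains nontrivial; the natural candidates are congruence quotients, so the first step is to set up the correct arithmetic framework. Since $\textbf{G}$ is affine, fix a closed embedding $\textbf{G} \hookrightarrow \GL_{m,\mathbb{Z}}$, and let $S$ be a finite generating set for $\Gamma$. Because $\Gamma$ is finitely generated, all matrix entries of elements of $S \cup S^{-1}$ lie in a finitely generated subring $R$ of $K$. In characteristic zero we may enlarge $K$ to a number field $L$ and take $R = \mathcal{O}_L[1/f]$ for some $f \in \mathcal{O}_L \setminus \{0\}$; in the function-field case we may take $R = \mathbb{F}_p[t_1,\ldots,t_k][1/f]$. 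Thus $\Gamma \leq \textbf{G}(R)$, and for each prime ideal $\mathfrak{p}$ of $R$ coprime to $f$ reduction furnishes a homomorphism $\Gamma \to \textbf{G}(R/\mathfrak{p})$ to a finite group.

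Next, I would bound the arithmetic complexity of a word-length-$n$ element. Writing $g$ as a product of at most $n$ elements of $S \cup S^{-1}$ gives a uniform bound of the form $\exp(Cn)$ on the archimedean height (respectively the total degree in the $t_i$) of each matrix entry of $g$, for a constant $C = C(S,m)$. If $a$ is any nonzero entry of $g - I$, then in either setting $a$ has at most $O(n/\log n)$ distinct prime ideal divisors in $R$, via the standard estimate on the number of distinct prime factors of an element of bounded size. The Prime Number Theorem (in the number-field case) or its function-field analogue then furnishes a prime ideal $\mathfrak{p}$ with $N\mathfrak{p} \leq C_1 n$ dividing neither $a$ nor $f$. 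Reduction modulo $\mathfrak{p}$ sends $g$ to a nontrivial element of $\textbf{G}(R/\mathfrak{p})$, and the standard point-counting estimate $|\textbf{G}(\mathbb{F}_q)| \leq C_2 \, q^{\dim(\textbf{G})}$, valid uniformly for any fixed affine group scheme over $\mathbb{Z}$, yields $|\textbf{G}(R/\mathfrak{p})| \leq C_3 n^{\dim(\textbf{G})}$, as required.

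The main obstacle is controlling constants so as to reach the bound $\preceq n^{\dim(\textbf{G})}$ without extraneous logarithmic factors. The delicate point is that the divisor-counting step must produce $O(n/\log n)$ forbidden primes, not $O(n)$; paired with PNT this produces a prime of norm genuinely linear in $n$, whereas the cruder bound would introduce a $(\log n)^{\dim(\textbf{G})}$ factor. A secondary difficulty, particularly in the multivariate function-field case, is arranging an effective version of PNT for $R = \mathbb{F}_p[t_1,\ldots,t_k][1/f]$; one option is to restrict to principal primes generated by univariate polynomials in $t_1$, reducing to the familiar one-variable count, while another is to invoke an effective Lang--Weil or Chebotarev estimate on $\mathbb{A}^k_{\mathbb{F}_p}$. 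Finally, the point-counting bound $|\textbf{G}(\mathbb{F}_q)| = O(q^{\dim(\textbf{G})})$ for an arbitrary affine group scheme, as opposed to a Chevalley group, requires a direct affine-variety argument (closed embedding in $\mathbb{A}^N$ plus Lang--Weil), rather than the structural formulas available in the semisimple setting.
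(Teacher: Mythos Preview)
The paper is a survey and does not give its own proof of this theorem; it simply attributes the characteristic-$0$ case to Bou-Rabee--Kaletha and the positive-characteristic extension to Franz. Your congruence-quotient strategy (bound the arithmetic complexity of a word of length $n$, find a small prime ideal avoiding a nonzero entry of $g-I$ via divisor counting plus an effective prime-counting theorem, and finish with a Lang--Weil--type bound $|\textbf{G}(\mathbb{F}_q)|=O(q^{\dim(\textbf{G})})$) is exactly the approach those papers take, and your identification of the $O(n/\log n)$ divisor count as the key to avoiding spurious logarithmic factors is on target.

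There is one genuine gap. In characteristic zero the hypothesis allows an \emph{arbitrary} field $K$, not just a number field; for instance $K=\mathbb{Q}(t)$ is permitted, and then the matrix entries of the generators need not lie in any $\mathcal{O}_L[1/f]$. Your sentence ``In characteristic zero we may enlarge $K$ to a number field $L$'' is therefore unjustified in general. What one actually has is that $R$ is a finitely generated $\mathbb{Z}$-algebra, possibly of positive transcendence degree over $\mathbb{Q}$, so the characteristic-$0$ case is structurally the same as the multivariate function-field case you flag as a ``secondary difficulty'': one must work directly with maximal ideals of $R$ (which have finite residue field by the Nullstellensatz) and invoke an effective Chebotarev/Lang--Weil count for $\operatorname{Spec} R$ rather than the classical Prime Number Theorem. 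This is precisely why the survey mentions that Franz's argument relies on an effective Chebotarev density theorem. Once you treat the general finitely generated $\mathbb{Z}$-algebra uniformly, your outline goes through.
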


What remains to be understood in the study of asymptotic lower bounds for residual finiteness of arithmetic groups is in the context of Chevalley groups of rank $1$. One can provide provide lower bounds for these groups using the fact that they contain nonabelian free groups, it would be interesting to see if a lower bounds for residual finiteness for these groups using information of the ambient Chevalley group which leads to the following question.
\begin{question}
Let $\textbf{G}$ be a Chevalley group of rank $1$, and let $K$ be a field. Suppose that $K$ is either a number field or a purely transcendental extension of a finite field. Let $\mathcal{O}$ be either the associated ring of integers if $K$ is a number field or let $\mathcal{O}$ be $\mathbb{F}_p[t]$ if $K$ has characteristic $p$. Let $\Gamma \leq \textbf{K}$ be a finitely generated group that is commensurable with $\textbf{G}(\mathcal{O})$. Find an asymptotic lower bound for $\RF_{\Gamma}(n)$ in terms of the geometry of $\textbf{G}$ and properties of the field $K$.
\end{question}
\subsection{Virtually nilpotent and solvable groups}
\label{sec:RFnilpotent}
For finitely generated abelian groups, the residual finiteness depth function is completely understood by \cite[Corollary 2.3.]{Bou_rabee_10}.

\begin{theorem}
Let $A$ be an infinite, finitely generated abelian group, then $$\RF_A(n) \simeq \log(n).$$
\end{theorem}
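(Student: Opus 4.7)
The plan is to prove both bounds $\RF_A(n) \preceq \log(n)$ and $\log(n) \preceq \RF_A(n)$ by first invoking the structure theorem for finitely generated abelian groups, writing $A \cong \mathbb{Z}^d \oplus T$ with $d \geq 1$ (since $A$ is infinite) and $T$ a finite abelian group. I would fix a finite generating subset $S$ compatible with this decomposition so that each coordinate projection is $C$-Lipschitz for some constant $C = C(S)$; by Lemma~\ref{lem:generatingset} this choice is harmless for the asymptotics.

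For the upper bound, I would take $g \in A \setminus \{1\}$ with $\|g\|_S \leq n$ and split into two cases. If $g$ has nontrivial image in $T$, the quotient $A \to T$ is of constant size $|T|$ and already does the job. Otherwise $g$ lies in $\mathbb{Z}^d$ and some coordinate satisfies $0 < |g_i| \leq C n$, and the key number-theoretic input is Chebyshev's elementary bound $\prod_{p \leq k,\, p \text{ prime}} p \geq e^{c k}$ for some constant $c > 0$. Choosing $k = C' \log(n)$ with $C'$ large enough forces the product of primes up to $k$ to exceed $|g_i|$, so some prime $p \leq C' \log(n)$ fails to divide $g_i$; reducing the $i$-th coordinate modulo $p$ then produces a finite quotient $A \to \mathbb{Z}/p\mathbb{Z}$ of order at most $C' \log(n)$ on which $g$ is nontrivial.

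For the matching lower bound, I would fix an element $x \in A$ of infinite order (which exists since $d \geq 1$) and consider the specific family $x^{m_k}$ where $m_k = \lcm(1, 2, \ldots, k)$. Chebyshev's complementary bound $m_k \leq e^{c' k}$ gives $\|x^{m_k}\|_S \leq m_k \|x\|_S \leq e^{c'' k}$. The key observation is that in any finite quotient $\varphi \colon A \to Q$ with $|Q| \leq k$, the order of $\varphi(x)$ divides $|Q| \leq k$, hence lies in $\{1, 2, \ldots, k\}$ and therefore divides $m_k$; consequently $\varphi(x^{m_k}) = 1$ and $\D_A(\{1\}, x^{m_k}) > k$. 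Combined with the monotonicity of $\RF_A$ and the growth estimate on $\|x^{m_k}\|_S$, this translates to $\log(n) \preceq \RF_A(n)$.

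There is no serious obstacle here — both directions rest solely on elementary Chebyshev-type bounds on $\log \lcm(1, \ldots, k)$, which cleanly give matching logarithmic behaviour and hence the asymptotic equivalence $\RF_A(n) \simeq \log(n)$. The only point requiring care is keeping track of the generating-set-dependent constants and verifying that in the torsion case the quotient $A \to T$ genuinely separates $g$ from the identity, both of which are routine.
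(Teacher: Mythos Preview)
Your proposal is correct and follows essentially the same route as the paper: both directions rest on the asymptotics of $\psi(r) = \lcm(1,\ldots,r)$, with the lower bound coming from the elements $x^{\psi(k)}$ and the upper bound from finding a small prime not dividing a nonzero coordinate. The only cosmetic difference is that you invoke Chebyshev's elementary bounds whereas the paper phrases it via the Prime Number Theorem; your version is in fact slightly more self-contained, but the argument is the same.
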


The result follows from the asymptotic behaviour of the function $$\psi(r) = \lcm(1, \ldots, r),$$ which is well-known due to its relation to the Prime Number Theorem. Indeed, it holds that $\displaystyle \lim_{r \to \infty} \frac{ \log(\psi(r))}{r} = 1$, leading to the aforementioned result by some estimates.

For all finitely generated groups $G$ containing a finite index residually finite subgroup $H \le G$, we know that $$\RF_G(n) \preceq \RF_H(n)^{[G:H]}$$ by a standard argument, leading to an upper bound of the form $\log(n)^k$ for virtually abelian groups. The exact function remains unknown though for the class of virtually abelian groups, and no example of a finitely generated virtually abelian group $G$ is known for which the residual finiteness depth function is not logarithmic. 
\begin{question}
Find a finitely generated virtually abelian group $G$ for which $$\RF_G(n) \not \simeq \log(n).$$
\end{question}

For nilpotent groups, the situation is more complicated. Since any finite nilpotent group is a direct sum of $p$-groups, a quotient of minimal order for a given element will always be a $p$-group. The minimal size of such a quotient may depend on properties of the prime number $p$, as demonstrated by the example in Pengitore \cite{pengitore2}. The sharpest results for the residual finiteness depth function are given in Pengitore \cite{pengitore3}, where $T(N)$ denotes the normal subgroup of torsion elements in a finitely generated nilpotent group $N$.
\begin{theorem}
Let $N$ be a finitely generated nilpotent group such that $N/T(N)$ has step length $c>1.$ There exist (explicit) natural numbers $k_1$ and $k_2$ where $k_1 \geq c+1$ and $k_2$ is less than the Hirsch length of $N$ such that $$\log(n)^{k_1} \preceq \RF_N(n) \preceq \log(n)^{k_2}.$$ In general, the numbers $k_1$ and $k_2$ are not equal. 
\end{theorem}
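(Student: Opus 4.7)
The plan is to reduce to a torsion-free nilpotent group and then analyse separating quotients one prime at a time, since any finite nilpotent group is a direct product of its Sylow $p$-subgroups. The torsion subgroup $T(N)$ is finite, so up to $\simeq$-equivalence we may replace $N$ by $N' = N/T(N)$, which by hypothesis is torsion-free of step $c$. Let $h$ denote its Hirsch length and fix a Mal'cev basis $u_1, \ldots, u_h$ adapted to the lower central series, so that each $u_i$ has a well-defined weight $w_i \in \{1, \ldots, c\}$.

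For the upper bound I would first show that any $g \in N'$ of word length at most $n$ has Mal'cev coordinates $(a_1, \ldots, a_h)$ bounded by $|a_i| \leq C \, n^{w_i}$, using standard estimates derived from the Baker--Campbell--Hausdorff formula. Let $i_0$ be the smallest index with $a_{i_0} \neq 0$. By the Prime Number Theorem there is a prime $p \leq C' \log(n)$ not dividing $a_{i_0}$; then the finite quotient obtained by killing the subgroup generated by $u_{i_0+1}, \ldots, u_h$ together with suitable $p$-powers of $u_1, \ldots, u_{i_0}$ separates $g$ from the identity. Its order is at most $p^{k_2}$ for an explicit $k_2 < h$ that depends on the position of $u_{i_0}$ and the nilpotent structure, giving the desired $\log(n)^{k_2}$ bound.

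For the lower bound I would exhibit a sequence $g_n$ of iterated $c$-fold commutators in the standard generators of length $O(n)$ whose image lies in $\gamma_c(N') \setminus \{1\}$ and whose leading Mal'cev coordinate grows like $n^c$. Any finite $p$-group quotient in which $g_n$ survives must, by the interaction of the Mal'cev basis with the lower central series modulo $p$, be large enough to carry both a $c$-fold iterated commutator structure and a large cyclic subquotient, forcing its order to be at least $p^{c+1}$ for every prime $p$ dividing the leading coordinate. Combined with the Prime Number Theorem this yields $\log(n)^{c+1} \preceq \RF_{N'}(n)$, and a more careful analysis that accounts for the other Mal'cev coordinates forced to appear pushes the exponent up to the promised $k_1 \geq c+1$.

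The main obstacle is not the upper bound, which is essentially a coordinate calculation, but the construction of elements matching $k_1$ and understanding why $k_1$ and $k_2$ typically differ. Producing $g_n$ with precisely the right Mal'cev profile and showing every separating quotient must have the claimed size involves controlling Baker--Campbell--Hausdorff expansions modulo $p$ and tracing how torsion in the associated graded Lie algebra of $N'$ influences the minimal exponent of separating $p$-groups. This $p$-dependence, first observed in Pengitore's earlier work, is exactly the source of the discrepancy between $k_1$ and $k_2$ and explains why $\RF_N(n)$ cannot be determined by the step length and Hirsch length alone.
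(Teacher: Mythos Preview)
The paper is a survey and does not contain a proof of this theorem; it attributes the result to Pengitore \cite{pengitore3} and only states it. So there is no in-paper argument to compare against, and your outline should be measured against the strategy in that reference rather than anything in the present text.

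That said, your plan follows the standard route used in Pengitore's work: pass to the torsion-free quotient, use a Mal'cev basis adapted to the lower central series, exploit that minimal separating quotients of a nilpotent group are $p$-groups, and invoke the Prime Number Theorem to control the relevant prime. The lower-bound mechanism you describe (iterated $c$-fold commutators producing central elements whose coordinate is a primorial, so that any separating $p$-group must have $p \gtrsim \log n$ and class $c$, hence order at least $p^{c+1}$) is exactly the engine behind $k_1 \ge c+1$, and your closing paragraph correctly identifies the $p$-dependence of the associated graded Lie ring as the reason $k_1$ and $k_2$ can differ.

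One technical point to tighten in the upper bound: you cannot in general ``kill the subgroup generated by $u_{i_0+1},\ldots,u_h$ together with suitable $p$-powers of $u_1,\ldots,u_{i_0}$'' and obtain a group quotient, since a tail segment of a Mal'cev basis need not span a normal subgroup unless it coincides with a term of the chosen central series. The correct construction first passes to $N'/\gamma_{w+1}(N')$ for the appropriate weight $w$ (or to a carefully chosen one-dimensional central quotient as in Pengitore's admissible-quotient machinery) and only then reduces modulo $p$; the resulting exponent $k_2$ is read off from the Hirsch lengths of these intermediate quotients rather than from a naive count of surviving basis elements. With that adjustment your sketch is faithful to the argument in the cited source.
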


Although the exact function $\RF_N(n)$ is unknown for a general nilpotent group $N$, it is possible to compute it in several concrete examples, e.g.~in the Heisenberg groups. All the known examples satisfy the property that the residual finiteness depth function is a quasi-isometric invariant, leading to the following question.

\begin{question}
Do there exist quasi-isometric nilpotent groups $N_1 \sim_{QI} N_2$ such that $\RF_{N_1}(n) \not \simeq \RF_{N_2}(n)$?
\end{question}

The exact behaviour of the residual finiteness depth function for virtually nilpotent groups is also widely open. 

Next we explore what is known for finitely generated residually finite solvable groups. 
%consider finitely generated solvable groups that are residually finite. In \cite{kms17}, the authors construct for every recursive function $f: \mathbb{N} \to \mathbb{N}$ a residually finite, finitely presented solvable group $G$ such that $f(n) \preceq \RF_G(n)$, showing that it is impossible to find a class of functions providing an upper bound for the residual finiteness depth function over all solvable groups $G$. 
For solvable groups $G$ that are linear, such as polycyclic groups, we always have $\RF_G(n) \preceq n^k$ for some $k > 0$, since this upper bound holds for all linear groups by Bou-Rabee and McReynolds in \cite[Theorem 1.1.]{bm15}. Moreover, the group $G$ is virtually nilpotent if and only if the residual finiteness depth function is bounded above by $\log(n)^k$ for some $k > 0$, see \cite[Theorem 1.1.]{bm11}, giving a new lower bound for solvable groups that are not virtually nilpotent. These are the only general bounds known for polycyclic groups, and there are no direct upper bounds that do not use linearity.

\begin{question}
Give new upper bounds for $\RF_G(n)$ for virtually polycyclic groups $G$ that do not use linearity which only depend on the Hirsch length. 
\end{question}

In \cite{pengitore4}, Pengitore gave a lower bound for certain solvable groups $G$, based on the existence of distorted elements in the Fitting subgroup, which is the subgroup generated by normal nilpotent subgroups of $G$. The main idea is that for elements in the Fitting subgroup, we can apply the lower bound for nilpotent groups, where elements of length $n$ can be longer in a subgroup because of subgroup distortion. In particular, it implies that for any such group with an element $x$ in the Fitting subgroup that is exponentially distorted, $\RF_G(n)$ must be at least linear. Depending on which step of the lower central series of the nilpotent normal subgroup of Fitting subgroup contains the element $x$ this lower bound can be further improved. Moreover, this paper gives the first non-trivial lower bounds for $\RF_G(n)$ for solvable groups of infinite Pr\"{u}fer rank that satisfy the conditions of its theorem.

Since these lower bounds only take into account certain elements in the Fitting subgroup, it is natural to study how to improve these by using more information of the group.

\begin{question}
Give lower bounds for virtually polycyclic groups that take into account the full structure of the group.
\end{question}

Another type of lower bound for solvable groups $G$ of finite Pr\"ufer rank is given by Bou-Rabee and McReynolds in \cite{bm11}. Here, the authors apply the relation between the residual finiteness depth functions $\RF_G(n)$, the word growth of the group $G$ and the normal subgroup growth, which measures the number of normal subgroups of a given index. For a solvable group of finite Pr\"ufer rank, we have that the normal subgroup growth is bounded above by $n^m$ and this leads to a lower bound $n^{\frac{2}{m}} \preceq \RG_G(n)$ when these groups have exponential word growth (or equivalently, the groups are not virtually nilpotent). 

For residual finiteness of linear finitely generated solvable group $G$ with no exponentially distorted elements or that are of infinite Pr\"{u}fer rank, we have that there exists some infinite field $K$ such that $G \leq \GL(m,K)$ where $K$ may have nonzero characteristic and will not be a finite algebraic extension of $\mathbb{Q}$. Since $G$ is finitely generated, the field generated by the coefficients of the matrices over $\mathbb{Q}$ or $\mathbb{F}_p$, depending on the characteristic of $K$, is finitely generated. Thus, it is a finite extension of a transcendental extension of $\mathbb{Q}$ or $\mathbb{F}_p$ of finite transcendence degree. Using restriction of scalars (or corestriction) we may assume this extension is purely transcendental. Hence, we have that $G$ is subgroup of $\GL(m, F(x_1, \ldots, x_s))$ where $F = \mathbb{Q}$ or $F = \mathbb{F}_p$. In particular, the coefficients of elements of $G$ would lie in the ring $R[\frac{1}{S}]$ where $R = \mathbb{Z}[x_1, \ldots, x_s]$ or $R = \mathbb{F}_p[x_1, \ldots, x_s]$ and $S$ a finite set of elements of $R$. 

Using basic algebra, we can then bound the degree and size of coefficients of the matrix representative of a word in $G$ in terms of its word length with respect to a fixed finite generating subset. As before, we can take the Zariski closure of $G$ in $\GL(m, F(x_1, \ldots, x_s))$ to obtain a linear algebraic group $\textbf{G}$ where $G \leq \GL(m, R[\frac{1}{S}]) \cap \textbf{G}$. 
%As with the previous discussion, we would then take the Zariski closure of $G$ and then proceed as before. 
However, the main difficulty compared to before is that we have polynomial matrix entries now. When $\textbf{G}$ is defined over $\mathbb{Z}[x_1, \ldots, x_s]/S$, we may reduce $\textbf{G}$ by taking the mod $p$ reduction to obtain $\textbf{G}_p$ which is then defined over $\mathbb{F}_p[x_1, \ldots, x_m]/S'$ where $S'$ is a finite collection of polynomials. We then further reduce modulo some irreducible ideal $\mathcal{I}$ in $\mathbb{F}_p[x_1, \ldots, x_s]$ where $\mathcal{I} \cap S' = \emptyset$ to obtain a linear algebraic group $\textbf{G}_{p^\ell}$ defined over $\mathbb{F}_{p^\ell}$ where $\ell$ is the index of $\mathcal{I}$ in $\mathbb{F}_p[x_1, \ldots, x_s]$. At this point, we need to find a matrix whose polynomial entries are highly divisible by ideals in $\mathbb{F}_p[x_1, \ldots, x_s]/S$ which presents difficulties of its own. The story when $\textbf{G}$ is defined over $\mathbb{F}_p$ is similar. Therefore, we have the following question.
\begin{question}
Let $G \leq \GL(n, K)$ be a finitely generated group where $K$ is an infinite field. Let $\textbf{G}$ be the Zariski closure of $G$ in $\GL(n,K)$. Find upper and lower bounds for $\RF_G(n)$ in terms of the geometric and algebraic properties of $\textbf{G}.$
\end{question}

One final estimate we want to mention is that asymptotic upper and lower bounds have been found for the lamplighter groups $\RF_{\mathbb{F}_p \wr \mathbb{Z}}(n)$ where $p$ is prime by Bou-Rabee, Chen, and Timashova \cite{lamplighter_rf} as seen in the following theorem.
\begin{theorem}
Let $\Gamma = \mathbb{F}_p \wr \mathbb{Z}$ where $p$ is prime. Then
$$
n^{3/2} \preceq \RF_{\Gamma}(n) \preceq n^2.
$$
\end{theorem}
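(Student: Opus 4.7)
The plan is to exploit the explicit description of finite-index normal subgroups of $\Gamma = \mathbb{F}_p[t, t^{-1}] \rtimes \mathbb{Z}$: each is determined (up to a $1$-cocycle twist) by a pair $(f, m)$ with $f \in \mathbb{F}_p[t]$, $f(0) \neq 0$, and $f \mid t^m - 1$, with index $p^{\deg f} \cdot m$. For $g = (P, 0)$, the condition $g \in N$ reduces to $f \mid P$ regardless of the twist, while for $g = (P, k)$ with $k \neq 0$ the quotient $\Gamma \to \mathbb{Z}/m$ for the smallest $m \nmid k$ already detects $g$ with index $O(\log n)$ by the prime number theorem. Hence the depth is dominated by the case $k = 0$.

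For the upper bound, suppose $g = (P, 0)$ with $\deg P \leq 2n$. Pick $d = \lceil \log_p(2n+2) \rceil$ so that $|\mathbb{F}_{p^d}^\times| > \deg P$; some $\alpha \in \mathbb{F}_{p^d}^\times$ satisfies $P(\alpha) \neq 0$, and its minimal polynomial $f$ over $\mathbb{F}_p$ has $\deg f \leq d$, $f(0) \neq 0$, and $f \nmid P$. Taking $m = \ord(\alpha) \leq p^{\deg f} - 1$, the resulting quotient detects $g$ and has size $p^{\deg f} \cdot m = O(n) \cdot O(n) = O(n^2)$.

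For the lower bound, my plan is to build, for each $s$, an element $g_s = (P_s, 0)$ whose depth exceeds $s$, where $P_s \in \mathbb{F}_p[t]$ is the product of every irreducible polynomial $f$ with $f(0) \neq 0$ and $p^{\deg f} \cdot \ord(f) \leq s$. Given any $N$ of index at most $s$, with corresponding pair $(f', m')$, every irreducible factor $f_i'$ of $f'$ has $\deg f_i' \leq \deg f'$ and $\ord(f_i') \mid m'$, so $p^{\deg f_i'} \cdot \ord(f_i') \leq p^{\deg f'} \cdot m' \leq s$. Hence $f_i' \mid P_s$ by construction, so $f' \mid P_s$, and $g_s \in N$; thus $\D_\Gamma(\{1\}, g_s) > s$.

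The main obstacle is to show $\deg P_s = O(s^{2/3})$, which together with the linear bound $\|g_s\| = O(\deg P_s)$ yields $\RF_\Gamma(n) \succeq n^{3/2}$. Grouping the irreducibles in $P_s$ by the order they impose on $t$, we get
\begin{equation*}
    \deg P_s = \sum_{\substack{m \geq 1,\ \gcd(m, p) = 1 \\ p^{\ord_m(p)} \cdot m \leq s}} \phi(m).
\end{equation*}
The inequality $p^{\ord_m(p)} \geq m + 1$ (which follows from $m \mid p^{\ord_m(p)} - 1$) restricts contributing $m$ to $m \leq \sqrt{s}$. Splitting the sum at the threshold $p^d \approx s^{2/3}$ yields both the $d \leq \tfrac{2}{3}\log_p s$ regime (total degree contribution from irreducibles of degree $d$ is at most $d N_d = O(p^d)$, summing to $O(s^{2/3})$) and the $d > \tfrac{2}{3}\log_p s$ regime (contribution bounded by $\sum_{m \leq s/p^d} \phi(m) = O((s/p^d)^2)$, also summing to $O(s^{2/3})$).
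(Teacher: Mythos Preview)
The survey does not prove this theorem; it only records the result of Bou--Rabee, Chen, and Timashova and summarises their method in one sentence (a congruence subgroup property for the classical representation of the lamplighter group). So there is no detailed proof in the paper to compare against, and the relevant question is whether your argument is correct. Your upper bound is fine. Your lower bound, however, has a genuine gap.

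The problem is the sentence ``Hence $f_i' \mid P_s$ by construction, so $f' \mid P_s$.'' This implication is only valid when $f'$ is squarefree, but nothing forces that. Since $f' \mid t^{m'}-1$ and $t^{m'}-1$ is squarefree over $\mathbb{F}_p$ only when $\gcd(m',p)=1$, any $m'$ divisible by $p$ allows repeated irreducible factors in $f'$. Concretely, take $f' = (t-1)^2$ and $m' = p$: then $f' \mid (t-1)^p = t^p - 1$, so the pair $(f',m')$ determines a finite-index normal subgroup $N$ with $[\Gamma:N] = p^{2}\cdot p = p^{3}$. Your $P_s$ is by definition a product of \emph{distinct} irreducibles, hence squarefree, so $(t-1)^2 \nmid P_s$ and therefore $g_s = (P_s,0) \notin N$. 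This shows $\D_\Gamma(\{1\},g_s) \le p^{3}$ for every $s \ge p$, a fixed constant, directly contradicting your claim that $\D_\Gamma(\{1\},g_s) > s$.

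The repair is to build $P_s$ with multiplicities: for each irreducible $f_1$ one must include $f_1^{k}$ where $k$ is the largest power that can occur in some admissible $f'$ of index at most $s$. A crude way to do this is to replace $P_s$ by $P_s^{\lfloor \log_p s\rfloor}$, since any irreducible factor of $f'$ has multiplicity at most $\log_p s$ (as $p^{k} \le p^{\deg f'} \le s$); this salvages the divisibility step but inflates the degree by a $\log s$ factor, yielding only $\RF_\Gamma(n) \succeq n^{3/2}/(\log n)^{3/2}$ rather than $n^{3/2}$. Recovering the sharp exponent requires a more careful count of how the multiplicities distribute across irreducibles, which is where the actual work in the cited paper lies; your degree estimate $\deg P_s = O(s^{2/3})$ is correct for the squarefree product but would need to be redone for the version with multiplicities.
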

The main technique the authors use in this article is that they show the classical representation of lamplighter groups have a congruence subgroup property that works well with the coarse geometry of the group. There are several questions that one may ask in light of this theorem. The first is as follows.
\begin{question}
Find precise asymptotic bounds for residual finiteness of the lamplighter groups.
\end{question}

Another question is to see if the above methods can be generalised for other linear wreath products such as $\mathbb{Z} \wr \mathbb{Z}$ or the higher rank of the lampligther groups given by $\mathbb{F}_p \wr \mathbb{Z}^k$ for $k>1.$ Thus, we have the following question.
\begin{question}
Let $\Gamma$ be either $\mathbb{Z} \wr \mathbb{Z}$ or $\mathbb{F}_p \wr \mathbb{Z}^k$ for $k>1.$ Provide nontrivial asymptotic bounds for residual finiteness of $\Gamma.$
\end{question}

We finish this subsection by exploring what is known for residual finiteness of a general solvable group. It is important to note that not all finitely presented solvable groups are residual finite. Kharlampovich provided infinitely many examples of finitely presented solvable groups of derived length $3$ which do not have solvable word problem. In particular, these groups are not residual finite. Thus, we have a nontrivial restriction when we assume that our solvable groups are residually finite. One may hope that for the class of residually finite finitely generated solvable groups $G$ that there exists a function $f$ that provides a universal upper bound for residual finiteness of this groups. However, as the following theorem of Khalarmpovich, Myasnikov, and Sapir demonstrates, this is not possible. Moreover, this theorem demonstrates that residual finiteness provides an extremely inefficient solution to the word problem for these whereas there exists a polynomial time solution for the word problem.
\begin{theorem}\label{horrible_rf}
For every recursive function $f$, there is a residually finite finitely presented solvable group $G$ of derived length $3$ where $f(n) \preceq \RF_G(n)$. Moreover, one can assume that the word problem in $G$ can be solved in at most polynomial time.
\end{theorem}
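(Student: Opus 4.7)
The plan is to construct, for each recursive function $f$, a finitely presented solvable group $G$ of derived length exactly $3$ in such a way that the arithmetic complexity of $f$ is encoded into the sizes of the smallest finite quotients separating a specific sequence of short elements, while a Magnus-style normal form provides a bypass for the word problem that does not go through finite quotients. This dichotomy --- an efficient infinite normal form coexisting with arbitrarily inefficient finite approximation --- is the central feature the construction must achieve.

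First I would build $G$ as an extension $1 \to M \to G \to A \to 1$, where $M$ is a finitely generated metabelian group, $A$ is a finitely generated free abelian group, and $A$ acts on $M$ by an explicitly chosen automorphism so that the derived length of $G$ is forced to be exactly three. Using a Baumslag--Remeslennikov style finite presentability theorem for suitable metabelian-by-abelian groups one arranges that $G$ is finitely presented. Residual finiteness is guaranteed by choosing $M$ so that it embeds via Magnus into $\GL_2(\mathbb{Z}[A])$ and exploiting the fact that a matrix ring over a Laurent polynomial ring is residually a matrix ring over a finite ring.

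Next I would use the recursive function $f$ to specify a sequence of elements $w_n \in G$, built from nested commutators, with $\|w_n\|_S \leq Cn$ whose image under the Magnus embedding lies in a carefully chosen descending chain of ideals in the group ring. The arithmetic point is that any finite quotient $\varphi \colon G \to Q$ with $\varphi(w_n) \neq 1$ must factor through a reduction of $\mathbb{Z}[A]$ modulo an ideal of index at least $f(n)$, forcing $|Q| \geq f(n)$ and hence $f(n) \preceq \RF_G(n)$. Verifying this lower bound reduces to analysing how finite quotients of metabelian groups correspond to congruence quotients of their Magnus embeddings, and invokes classical theorems of P.\ Hall on residual properties of finitely generated modules over group rings of abelian groups.

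The main obstacle, and the genuinely surprising content of the theorem, is exhibiting a polynomial-time algorithm for the word problem that is entirely independent of the residual finiteness bound. Here I would use that the word problem in a finitely generated metabelian group reduces to equality of elements in a finitely generated module over $\mathbb{Z}[A]$, and hence to equality of Laurent polynomials, which is decidable in polynomial time via standard elimination algorithms; extending from $M$ to $G = M \rtimes A$ costs only polynomial overhead. The crucial observation is that this algorithm never constructs or traverses any finite quotient of $G$, whereas the Mal'tsev approach of enumerating finite quotients is by construction forced to require time growing faster than $f$, regardless of how fast $f$ was chosen. The most delicate technical step will be the explicit bookkeeping needed to ensure every constant in the construction is computable and the resulting group is genuinely finitely presented rather than merely recursively presented; this step is where the finite-presentability machinery for wreath-like metabelian-by-abelian constructions must be invoked in earnest.
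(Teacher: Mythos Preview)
The paper does not prove this theorem; it is a survey that attributes the result to Kharlampovich, Myasnikov, and Sapir and states it without argument. There is therefore no in-paper proof to compare against, and your proposal must be assessed on its own.

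Your outline has a genuine gap at the step where the recursive function $f$ enters. You say you will ``use the recursive function $f$ to specify a sequence of elements $w_n$'' whose detection requires quotients of size at least $f(n)$, via a ``carefully chosen descending chain of ideals'' in $\mathbb{Z}[A]$. But for a fixed group of the Magnus-embeddable, metabelian-by-abelian shape you describe, the residual finiteness growth is a fixed function determined by the arithmetic of the module structure; it is not a free parameter you can tune to dominate an \emph{arbitrary} recursive $f$. The ideal-theoretic obstructions coming from reductions of $\mathbb{Z}[A]$ modulo maximal ideals grow in a specific (and rather tame) way, and nothing in your sketch simulates the computation of a general recursive function. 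Put differently: you have described how to build one group with large residual finiteness growth, not a family of groups indexed by recursive functions whose growth can be made to exceed any prescribed $f$.

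The Kharlampovich--Myasnikov--Sapir construction works by a different mechanism: it encodes a Minsky machine computing $f$ directly into the defining relations, adapting Kharlampovich's earlier construction of finitely presented solvable groups with unsolvable word problem. The length of the machine's run on input $n$ is what forces any separating finite quotient to be large, while the polynomial-time word problem is arranged by ensuring that triviality of a word can be certified without simulating the machine. Your proposal is missing exactly this computational encoding, which is the substantive content of the theorem; the Magnus-embedding and Laurent-polynomial normal-form ideas you describe are relevant to the polynomial-time word problem side but do not by themselves produce the lower bound.
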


This theorem motivates the following question. For each class of functions, such as polynomial, intermediate growth, exponential, super exponential, etc., does there exist a characterisation of the collection of finite presented solvable groups whose asymptotic growth of residual finiteness lies within that class.
\begin{question}
Characterise the collection of residually finite finitely presented solvable groups whose residual finiteness growth is asymptotically polynomial, intermediate, exponential, etc.
\end{question}

\subsection{Mapping class groups}
Though we know that $\RF_{\Mod(\Sigma_g)}(n)$ is a nowhere infinite function by virtue of $\Mod(\Sigma_g)$ being residual finite, no explicit upper bounds for residual finiteness of $\Mod(\Sigma_g)$ have been written down other than for the genus 2. Indeed, by Bigelow and Budney \cite{genus2_linear} we have that $\Mod(\Sigma_2)$ is linear which implies that there are polynomial upper bounds for $\RF_{\Mod(\Sigma_2)}(n)$ by \cite{bm15}. For genus $g > 2$, having a polynomial upper bound for $\RF_{\Mod(\Sigma_g)}(n)$ would give a strong indication that $\Mod(\Sigma_g)$ is linear. 

Using results from the literature, one can provide rather coarse bounds for the residual finiteness of $\text{Mod}_g$ when $g > 2$ in the following way. Let $f \in \Mod(\Sigma_g)$ be an element of length at most $n$ with respect to some fixed generating subset. Via the action of $\Mod(\Sigma_g)$ on the homology of $\Sigma_g$, which preserves the algebraic intersection number, we have the symplectic representation $\Phi \colon \Mod(\Sigma_g) \to \text{Sp}(2g, \mathbb{Z})$ which is well known to be surjective with kernel given by the Torelli group $\mathcal{T}(\Sigma_g)$. Thus, if $f$ acts nontrivially on the homology of $\Sigma_g$, we have that $\Phi(f) \neq 1$. Therefore, there exists an integer $d$ such that there exists a surjective homomorphism to a finite group $\psi \colon \text{Sp}(2g, \mathbb{Z}) \to Q$ such that $\psi(\Phi(f)) \neq 1$ and where $|Q| \leq C (\|g\|)^d$ for some universal constant $C>0$. In order to find an upper bound, we may hence assume that $f \in \mathcal{T}(\Sigma_g)$. Since the group $\mathcal{T}(\Sigma_g)$ is torsion free, the element $f$ has infinite order.

Fix a finite generating set $T$ for $\pi_1(\Sigma_g)$. We have that $f$ acts on the set of oriented isotopy classes of simply closed curves in $\Sigma_g$. An observation in \cite{koberda_mangahas} states that that there exists a universal constant $\lambda > 1$ such that if $w \in \pi_1(\Sigma_g)$, then the shortest representative for $f([w])$ in $\pi_1(\Sigma_g)$ has length at most $\lambda^n \|w\|_T$ where $n$ is the length of the mapping class element $f$. Moreover, basic hyperbolic geometry implies that if $c_1, c_2$ are a pair of filling curves in minimal position, then the subset of the mapping class group that preserves the unordered pair of isotopy classes $\{[c_1], [c_2]\}$ is a finite group. Since $f$ has infinite order, there exists an $i \in \{1, 2\}$ such that $f([c_i]) \neq [c_i]$. Letting $C = \text{max}\{\|w_1\|_T, \|w_2\|_T\}$ where $c_i$ is in the based homotopy class of $w_i$, we have that the shortest representative of either $f([w_1])$ or $f([w_2])$ has length at most $C \lambda^n$ where $n$ is the word length of $f$ as an element of the mapping class group. All together this implies that for any infinite order element $f \in \Mod(\Sigma_g)$ of length at most $n$, we have that there exists a word $w \in \pi_1(\Sigma_g)$ such that $[w]$ and $f([w])$ are distinct conjugacy classes with representatives of length at most $C \lambda^n$.

Thus, in order to distinguish $f$ from the identity in a finite quotient, we use what are known as principal congruence quotients of $\Mod(\Sigma_g)$ which are given in the following way. For a finite index characteristic subgroup $K \leq \pi_1(\Sigma_g)$, the Dehn-Nielsen-Baer theorem implies there exists an induced map $\varphi_K \colon \Mod(\Sigma_g) \to \Out(\pi_1(\Sigma_g) / K)$. If we find a finite index characteristic subgroup $K$ of $\pi_1(\Sigma_g)$ such that the conjugacy classes $[w]$ and $f([w])$ remain distinct, then $f$ has an non-trivial action on the conjugacy classes of $\pi_1(\Sigma_g)/K$ and hence $\varphi_K(f) \neq 1$ in $\Out(\pi(\Sigma_g)/K)$. Thus, to provide an upper bound on the sizes of the image of $\varphi_k$, there are 3 quantities that need to be estimated. 

The first is to provide a quantification of conjugacy separability for $\pi_1(\Sigma_g)$. That is, if we are given two non-conjugate elements $g,h \in \pi_1(\Sigma_g)$ of word length at most $n$, we want to find a finite quotient $\psi \colon \pi_1(\Sigma_g) \to Q$ where $\psi(g)$ and $\psi(h)$ remain non-conjugate and where $|Q|$ is bounded in terms of $n$. As we will explain in the next section, this is bounded by $n^{n^2}$.

The second quantity that needs to be estimated is the index of the characteristic core of a finite index normal subgroup. In particular, we are interested when given a normal subgroup $N \leq \pi_1(\Sigma_g)$ of index at most $n$, bounding the index of the maximal characteristic subgroup of $\pi_1(S)$ contained in $N$ in terms of $[\pi_1(\Sigma_g) : N]$. By \cite[Corollary 2.8]{subgroup_growth} and the fact that $\pi_1(\Sigma_g)$ surjects onto a nonabelian free group, we have that the number of distinct normal subgroups of index $n$ is asymptotic to $n^{\log n}$. Since the image under any automorphism of a normal subgroup is normal, we have that the index characteristic core of a normal subgroup of index at most $n$ is a constant multiple of $n^{n^{\log(n)}}$. 

Finally, we need to provide an estimate for order of $\Im(\varphi_K)$. All we have at this point is a subgroup of a quotient of $\Aut(\pi_1(\Sigma_g)/K)$ which is a subgroup of the symmetric group on $|\pi_1(\Sigma_g)/K|$ letters. Therefore, we have that $|\Im(\varphi_k)| \leq |\pi_1(S)/K)|!$. 

Combining this all together, we have that for a mapping class $f$ of word length $n$, there exists a surjection onto a finite group $\varphi \colon \Mod(\Sigma_g) \to Q$ such that $\varphi(f) \neq 1$ and where $|Q|$ is bounded above by $(m^{m^2})^{((m^{m^2})^{\lceil\log(m^{m^2}) \rceil})}!$ with $m = C \lambda^n$ up to some fixed constant. As this is a huge upper bound, the natural question is whether we can improve it. 
\begin{question}
Give improved upper bounds for the residual finiteness of the mapping class group of the closed orientable surface of genus greater than $2$.
\end{question}

In order to demonstrate that $f(n) \preceq \RF_{\Mod(\Sigma_g)}(n)$ for any increasing function $f(n)$, we need to demonstrate that there exists an infinite sequence of mapping classes $\{g_i\}$ such that the minimal finite quotient of $\Mod(\Sigma_g)$ in which $g_i$ does not vanish has order bounded below by $f(\|g_i\|)$. Finding such a sequence elements that give a nontrivial lower bound for a general residually finite group is hard in general due to the plethora of finite quotients of $\Mod(\Sigma_g)$. Indeed, a general finite quotient of $\Mod(\Sigma_g)$ can be quite mysterious, so it is necessary to restrict to subclass of finite quotients which are more explicit. By using the class of principal congruence quotients, we find an associated complexity function, for which we can study residual finiteness for $\Mod(\Sigma_g)$.

\begin{question}
Provide asymptotic lower bounds for residual finiteness of the mapping class group of closed orientable surface of genus greater than $2$ with respect to the principal congruence quotients.
\end{question}

The Dehn-Nielsen-Baer theorem implies that $\Mod(\Sigma_g)$ comes with a class of finite index subgroups known as congruence subgroups. For each chacteristic subgroup of $K$ of $\pi_1(\Sigma_g)$, we have an induced homomorphism $\varphi$ from $\Mod(\Sigma_g)$ to $\text{Out}(\pi_1(\Sigma_g) / K)$ whose kernel is known as a \emph{principal congruence subgroup}, and any finite index subgroup that contains a principal congruence subgroup is referred to as a \emph{congruence subgroup}. Whether each finite index subgroup of $\Mod(\Sigma_g)$ is a congruence subgroup is known as the \emph{congruence subgroup problem} (see \cite{ivanov}), and while the congruence subgroup property is not the main focus of this subsection, a positive answer would allow us a means of understanding the finite index subgroup structure of $\Mod(\Sigma_g)$ which would greatly benefit the study of residual finiteness of $\Mod(\Sigma_g)$.

\subsection{RAAGs}
Given that all right angled Artin groups $G$ (RAAGs) are linear, we have that $\RF_G(n) \preceq n^d$ for some integer $d >0$. However, it would be interesting to demonstrate a polynomial upper bound for residual finiteness of RAAGs using the inherent geometry of the group, for instance through the use of the Salvetti complex. We also note that nonabelian RAAGs contain a nonabelian free group which gives superlinear lower bounds for residual finiteness as described in Section \ref{sec:freeRF}. In the same spirit as the upper bound, it would be interesting to find nontrivial lower bounds for residual finiteness of a RAAG starting from the geometry of the group. This leads to the following question.
\begin{question}
Let $G$ be a nonabelian RAAG. Find asymptotic upper and lower bounds for $\RF_G(n)$ using the geometry of $G$.
\end{question}

\subsection{Closed geometric $3$-manifolds}
Let $M$ be a closed orientable hyperbolic $3$-manifold, and let $G = \pi_1(M)$ be its fundamental group. We know by the resolution of the virtual Haken conjecture by Agol \cite{agol}, building on the work of Wise \cite{Wise}, that all $G$ is always linear, and in particular, we have that $G$ is virtually a subgroup of a RAAG. Thus, we would be able to appeal to linearity of $G$ or to the fact that $G$ is a subgroup of a RAAG to provide polynomial asymptotic upper bounds for $\RF_G(n).$ However, it would be interesting to see if there are asymptotic bounds for $\RF_G(n)$ that reflect the geometry of the manifold $M$. Hence, we have the following question.
\begin{question}
Let $M$ be a closed orientable hyperbolic $3$-manifold, and let $G = \pi_1(M)$. Provide asymptotic upper and lower bounds for $\RF_G(n)$ using the geometry of $M$.
\end{question}

\subsection{$\text{Out}(F_k)$ and Outer automorphism groups of RAAGs}

We know by Charney and Vogtmann \cite{charney_vogtman} and independently Minasyan \cite{cs_raag_minasyan} that $\Out(\Gamma)$ is residually finite when $\Gamma$ is a RAAGs. However, this collection includes groups as disparate as $\GL(n, \mathbb{Z})$ and $\Out(F_k)$ where $F_k$ is the free group of rank $k \geq 3$, where the latter is not linear as a consequence of Formanek and Procesis \cite{formanek_procesi}. In some cases of RAAGs, such as when $\Gamma = \mathbb{Z}^n$ or $\Gamma = F_2$, the asymptotic behaviour of residual finiteness of $\Out(\Gamma)$ is completely understood as a consequence of their linearity. However, for examples such as $\Out(F_k)$ where $k>2$, the asymptotic behaviour of residual finiteness is poorly understood at best. One can provide superlinear lower bounds for $\RF_{\Out(F_k)}(n)$ by virtue of these groups containing nonabelian free groups, and, in a similar way as for the mapping class group, one can provide really coarse upper bounds for $\RF_{\Out(F_k)}(n)$. However, we expect that there are asymptotic upper and lower bounds for $\Out(F_k)$ that come from the geometry of the free group and natural objects on which $\Out(F_k)$ acts, such as outer space. Therefore, we have the following natural question which we feel should be of interest to the broader group theory community.
\begin{question}
\label{question:outer}
Let $F_k$ be the free group of rank $k \geq 3$. Provide nontrivial upper and lower asymptotic bounds for $\RF_{\Out(F_k)}(n)$.
\end{question}

\subsection{Branch groups}
The last class of which we discuss results about the asymptotic residual finiteness are groups which act by automorphisms on regular rooted trees, which are known to be nonlinear. This includes the so-called branch groups, namely groups that admit a lattice of subnormal subgroups with the branching structure following the structure of the tree on which the groups act. 

This class contains many example of groups with remarkable algebraic properties, the first of which is known as the \emph{first Grigorchuck group} $\Gamma$. It is an infinite finitely generated group whose elements have order equal to some power of $2$, every proper quotient is finite, the group is commensurable with $\Gamma \times \Gamma$ and has intermediate growth. The asymptotic behaviour of residual finiteness for the first Grigorchuk group is known by Bou-Rabee \cite{Bou_rabee_10}.
\begin{theorem}
Let $\Gamma$ be the first Grigorchuk group, then $\RF_{\Gamma}(n) \approx 2^n.$
\end{theorem}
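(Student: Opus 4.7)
The proof requires establishing both $\RF_\Gamma(n) \preceq 2^n$ and $2^n \preceq \RF_\Gamma(n)$. Both bounds rely on the faithful action of $\Gamma$ on the rooted binary tree $T$ and the resulting filtration by level stabilizers $\{\text{St}_\Gamma(k)\}_{k \geq 0}$, where $\text{St}_\Gamma(k) \trianglelefteq_{f.i.} \Gamma$ is the pointwise stabilizer of the $k$-th level of $T$. Every finite quotient of interest will be of the form $\Gamma/N$ for some $N$ trapped above or below such a level stabilizer.

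For the upper bound, I would exploit the \emph{contraction property} of $\Gamma$: there exist constants $\eta < 1$ and $C > 0$ such that if $g \in \text{St}_\Gamma(1)$ has first-level decomposition $g = (g_0, g_1)$, then $\|g_i\|_S \leq \eta \|g\|_S + C$. Iterating, any $g \in \text{St}_\Gamma(k)$ of initial word length at most $n$ has every level-$k$ section of length bounded by $\eta^k n + C'$. Choosing $k$ large enough that this bound falls below the minimum length of a nontrivial element in $\Gamma$ — which requires only $k$ of order $\log n$ — forces every nontrivial $g$ of length $\leq n$ to act nontrivially on level $k$, hence to survive in $\Gamma/\text{St}_\Gamma(k)$. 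Combining this with estimates on $[\Gamma:\text{St}_\Gamma(k)]$ obtained inductively from the embedding $\Gamma \hookrightarrow (\Gamma \times \Gamma) \rtimes \mathbb{Z}/2\mathbb{Z}$ (which is built into the commensurability of $\Gamma$ with $\Gamma \times \Gamma$) yields the desired exponential upper bound.

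For the lower bound, I would construct a sequence of short witnesses $g_n \in \Gamma$ with $\|g_n\|_S = O(n)$ which act trivially on every level of $T$ below some $k = \Theta(n)$. Using the recursions $b=(a,c)$, $c=(a,d)$, $d=(1,b)$ between the standard generators, one builds words — for instance, nested commutators or branch-structured products that gain a level of triviality at only a constant additive cost in word length — that sit extremely deep in the level stabilizer filtration relative to their length. To conclude that any separating finite quotient has order at least $2^{\Omega(n)}$, one must show that every finite-index normal subgroup of $\Gamma$ avoiding $g_n$ has large index; this uses the \emph{just-infiniteness} of $\Gamma$ (every proper quotient is finite) together with the branch-structure dichotomy that forces every finite quotient to be sandwiched between principal congruence quotients.

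The main obstacle, I expect, is the lower bound: one must rule out the existence of ad hoc non-congruence finite quotients that separate $g_n$ from the identity more efficiently than the level-$k$ congruence quotient, which requires a universal estimate on the index of \emph{all} finite-index normal subgroups not containing $g_n$, not merely the level stabilizers. Aligning the constants between the two bounds — so that the contraction rate $\eta$ and the growth rate of $[\Gamma:\text{St}_\Gamma(k)]$ produce the same exponential order $2^n$ — is where the specific recursive structure of the first Grigorchuk group, rather than general branch-group arguments, has to be used in an essential way.
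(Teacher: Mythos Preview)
Your overall architecture matches the approach outlined in the paper (which is a survey, so it only sketches the argument from Bou-Rabee and Bou-Rabee--Myropolska): use contraction along the level-stabilizer filtration for the upper bound, and use the congruence-type control of normal subgroups together with carefully chosen ``deep'' elements for the lower bound. The upper-bound paragraph is essentially correct: contraction forces any nontrivial $g$ with $\|g\|\le n$ to fall out of $\mathrm{St}_\Gamma(k)$ once $k$ is of order $\log n$, and since $[\Gamma:\mathrm{St}_\Gamma(k)]$ grows like $2^{c\,2^{k}}$ (from the branching embedding you cite), this gives $\RF_\Gamma(n)\preceq 2^{n}$.

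The lower-bound paragraph, however, contains a genuine scaling error. You assert that one can ``gain a level of triviality at only a constant additive cost in word length'' and hence produce $g_n$ with $\|g_n\|=O(n)$ lying in $\mathrm{St}_\Gamma(k)$ for $k=\Theta(n)$. This is false for the first Grigorchuk group and in fact contradicts your own upper-bound argument: contraction with ratio $\eta<1$ forces any nontrivial element of level $k$ to have word length at least of order $(1/\eta)^{k}$, so the minimal length of a level-$k$ element is $\Theta(2^{k})$, not $\Theta(k)$. Concretely, lifting $g_k$ to an element with decomposition $(g_k,1)$ via the branch structure multiplies the length by a constant factor, it does not add a constant. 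The correct picture is: the witnesses $g_n$ have length $\approx n$ and level $\approx \log n$; the smallest quotient detecting them must factor through something at least as fine as $\Gamma/\mathrm{St}_\Gamma(\log n)$, whose order is $\approx 2^{2^{\log n}} = 2^{n}$. With your stated parameters ($k=\Theta(n)$), the congruence quotient would have order $2^{2^{\Theta(n)}}$, which is doubly exponential and incompatible with the upper bound you just proved. So the two halves of your proposal, as written, cannot both be right; fixing the lower bound to $k=\Theta(\log n)$ and dropping the ``additive cost'' claim brings it in line with the argument the paper describes.
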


In order to discuss results on regular branch groups, we will consider certain subgroups of the automorphism group of a regular rooted tree. Let $X$ be a finite alphabet with $|X| \geq 2.$ The vertex set of the tree $T_X$ is the set of finite sequences over $X$. Two sequences are connected by an edge when one can be obtained from the other by right-adjunction of a letter in $X$. The root is the empty sequence $\emptyset$, and the children of $v$ are the vertices $vx$ for $x \in X$. The set $X^n \subset T_X$ is called the \emph{nth level of the tree $T_X$}. An \emph{automorphism} of the tree is a bijective graph homomorphism of $T_X.$

Let $g \in \Aut(T_X)$ be an automorphism of the rooted tree $T_X$. Consider a vertex $v \in T_X$ and the subtrees
$$
vT_X = \{vw \: | \: w\ \in T_X\} \quad \text{and} \quad g(v)T_X = \{g(v) w \: | \: w \in T_X\}.
$$
The map $v T_X \to g(v) T_X$ is a morphism of rooted trees with $vT_X$ and $g(v)T_X$ both naturally isomorphic to $T_X.$  By identifying $vT_X$ and $g(v)T_X$ with $T_X$, we obtain an automorphism $g|_v \colon T_X \to T_X$, which is uniquely defined by the condition
$$
g(vw) = g(v) g|_v(w)
$$
for all $w \in T_X.$ We call the automorphism $g|_v$ the \emph{restriction of $g$ on $v$}. A subgroup $G \leq \Aut(T_X)$ is \emph{self-similar} if for every $g \in G$
 and every $v \in T_X$, we have $g|_v \in G.$

We also define the notion of contraction of an action of a self-similar group on a regular rooted tree.

\begin{definition}
Let $T_X$ be a regular rooted tree on the alphabet $X$ where $|X| \geq 2$, and let $G \leq \Aut(T_X)$ be a self-similar group with a finite generating subset $S$. The number
$$
\lambda_{G,T_X} = \limsup_{n \to \infty} \sqrt[n]{\limsup_{\|g\|_S \to \infty} \Max_{v \in X^n} \frac{\|g|_v\|_S}{\|g\|_S}}
$$
is called the \emph{contraction coefficient}. A self-similar group $G \leq \Aut(T_X)$ is called \emph{contracting} if $\lambda_{G,T_X} < 1$.
 \end{definition}

For $G \leq \Aut(T_X)$ and $v \in T_X$, the \emph{vertex stabiliser} is the subgroup consisting of the automorphisms that fix $v$:
$$
\text{Stab}_G(v) = \{g \in G \: | \: g(v) = v\}.
$$
The \emph{nth level stabiliser} (also known as principal congruence subgroup) is given by
$$
\text{Stab}_{G}(n) = \bigcap_{v \in X^n}\text{Stab}_G(v).
$$
The \emph{rigid stabiliser} $\text{rist}_G(v)$ of a vertex $v \in T_X$ is the subgroup of $G$ of all automorphisms acting non-trivially only on the vertices of the form $vu$ with $u \in T_X:$
$$
\text{rist}_G(v) = \{g \in G \: | \: g(w) = w \text{ for all } w \notin vT_X\}.
$$
The \emph{nth level rigid stabiliser} is given by
$$
\text{rist}_G(n) = \left< \text{rist}_G(v) \: | \: v \in X^n \right> \cong \prod_{v \in X^n} \text{rist}_G(v)
$$
which is the subgroup generated by the union of the rigid stabilisers of the vertices of the nth level.

Any $g \in \text{Stab}_G(n)$ can be identified in a natural way with the sequence $$(g_1, \ldots, g_{|X|^n})$$ of elements in $\Aut(T_X)$ where $g_i = g|_v$ is the restriction of $g$ to the vertex $v$ of level $n$ having the number $i$ in some ordering of the vertices in the nth level $(1 \leq i \leq |X|^n)$. We say that $g$ is of \emph{level $n$} if $g \in \text{Stab}_G(n) \setminus \text{Stab}_G(n+1)$ and write $g = (g_1, \ldots, g_{|X|^n})_n.$ We say that a subgroup $K$ \emph{geometrically contains $K^{|X|^n}$} for some $n \geq 1$, if for every $k_1, \ldots, k_{|X|^n} \in K$, there is some element $k \in K$ such that $k = (k_1, \ldots, k_{|X|^i})_i$. We call a level transitive group $G \leq \Aut(T_X)$ \emph{branch} if $\text{rist}_G(n)$ is finite index in $G$ for all $n \geq 1.$ 

For the sake of the results that we will be citing we will restrict ourselves to the following important type of branch groups.
\begin{definition}
A level transitive group $G \leq \Aut(T_X)$ is \emph{regular-branch} if there exists a finite index subgroup such that $K$ geometrically contains $K^{|X|}$ of finite index.
\end{definition}

The study of asymptotic residual finiteness for this more general class of groups that act on regular rooted trees is seen in the following theorem of Bou-Rabee and Myropolska \cite{bou_rabee_myropolska} who provide at least exponential upper bounds for the asymptotic behaviour of residual finiteness.
\begin{theorem}
Let $G$ be a finitely generated group acting on a rooted d-regular tree. Suppose that $G$ is regular-branch and contracting with contraction coefficient $\lambda < 1$. Then
$$
\RF_{G}(n) \preceq 2^{n^{\frac{1}{\log_d(1/ \lambda)}}}.
$$
\end{theorem}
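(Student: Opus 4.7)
The plan is to use the family of principal congruence subgroups $\text{Stab}_G(k)$ as the distinguishing finite quotients and to combine two estimates: a bound on the smallest level $k_0 = k_0(g)$ at which a nontrivial element $g \in G$ of word length at most $n$ fails to stabilise the tree, together with a bound on the index $[G : \text{Stab}_G(k_0)]$.

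First I would verify that this family of quotients suffices. Because $G$ is branch, every rigid level stabiliser $\text{rist}_G(k)$ is of finite index in $G$, and it is contained in $\text{Stab}_G(k)$, so each $\text{Stab}_G(k)$ is finite index in $G$. Since $G$ acts faithfully on $T_X$, the intersection $\bigcap_k \text{Stab}_G(k)$ is trivial, so for any nontrivial $g \in G$ there is a minimal $k_0 \geq 1$ with $g \notin \text{Stab}_G(k_0)$, and the quotient map $G \to G/\text{Stab}_G(k_0)$ separates $g$ from the identity.

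Next I would bound $k_0$ using contraction. By minimality $g \in \text{Stab}_G(k_0 - 1)$, so $g = (g_1, \ldots, g_{d^{k_0 - 1}})_{k_0 - 1}$ with each $g_i \in G$; the condition $g \notin \text{Stab}_G(k_0)$ forces at least one restriction $g_{i_0}$ to act nontrivially on the first level of its subtree, so $g_{i_0} \neq 1$ and $\|g_{i_0}\|_S \geq 1$. The definition of $\lambda_{G,T_X} = \lambda < 1$ as a double $\limsup$ provides, for each small $\epsilon > 0$, thresholds $N_\epsilon, M_\epsilon$ such that $\Max_{v \in X^k} \|h|_v\|_S \leq (\lambda + \epsilon)^k \|h\|_S$ for all $k \geq N_\epsilon$ and all $h \in G$ with $\|h\|_S \geq M_\epsilon$. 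Applying this to $h = g$ at depth $k_0 - 1$ yields $1 \leq (\lambda + \epsilon)^{k_0 - 1} n$, so $k_0 \leq 1 + \log(n)/\log(1/(\lambda + \epsilon))$. Fixing $\epsilon$ and absorbing the finitely many short-word or shallow-depth exceptions into a multiplicative constant (which is harmless under $\preceq$), we obtain $k_0(g) \leq C\log_{1/\lambda}(n)$ for a constant $C$ depending only on $G$ and $S$.

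Finally I would estimate $[G : \text{Stab}_G(k_0)]$. The quotient embeds into the automorphism group of the truncated tree $T_X^{[k_0]}$, an iterated wreath product of $\Sym(d)$ with $(d^{k_0} - 1)/(d-1)$ internal vertices, of order $(d!)^{(d^{k_0}-1)/(d-1)} \leq 2^{A\, d^{k_0}}$ for a constant $A = A(d)$. Combining with the bound on $k_0$,
\[
d^{k_0} \leq d^{C \log_{1/\lambda}(n)} = n^{C\log d/\log(1/\lambda)} = n^{C/\log_d(1/\lambda)},
\]
hence $[G : \text{Stab}_G(k_0)] \leq 2^{A' n^{1/\log_d(1/\lambda)}}$ with $A'$ absorbing the constants. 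Taking the maximum over all nontrivial $g$ with $\|g\|_S \leq n$ gives $\RF_G(n) \preceq 2^{n^{1/\log_d(1/\lambda)}}$.

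The main obstacle is the passage from the double-$\limsup$ definition of the contraction coefficient to an honest inequality $\|g|_v\|_S \leq (\lambda + \epsilon)^k \|g\|_S$ that can be applied uniformly at the critical depth $k_0 - 1$ for every nontrivial $g$. One must check that the exceptional short words and small depths contribute only a bounded correction, which is absorbed in the $\preceq$-relation. Everything else is a routine estimate of constants in the double exponential $(d!)^{(d^k-1)/(d-1)}$.
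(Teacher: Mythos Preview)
Your approach is precisely the one the survey indicates: the paper does not give a detailed proof but records only that ``for the upper bounds, one estimates how deep the level $n$-stabiliser needs to not contain your given element in terms of the word length,'' and you have fleshed this out into the natural three-step argument via principal congruence quotients, the contraction bound on $k_0$, and the wreath-product bound on $[G:\text{Stab}_G(k_0)]$.

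There is one point where your bookkeeping slips. From the $\limsup$ definition you obtain $k_0 \leq 1 + \log_{1/(\lambda+\epsilon)}(n)$, which you rewrite as $k_0 \leq C\log_{1/\lambda}(n)$ with $C = \log(1/\lambda)/\log(1/(\lambda+\epsilon)) > 1$, and then conclude $d^{k_0} \leq n^{C/\log_d(1/\lambda)}$. But a constant strictly larger than $1$ sitting in the \emph{exponent of $n$} inside the outer exponential is not absorbed by the relation $\preceq$: the inequality $2^{n^{\alpha'}} \leq C \cdot 2^{(Cn)^{\alpha}}$ forces $n^{\alpha'} \leq C^{\alpha} n^{\alpha} + \log_2 C$, which fails for large $n$ whenever $\alpha' > \alpha$. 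So as written your argument yields only $\RF_G(n) \preceq 2^{n^{1/\log_d(1/\lambda)+\delta}}$ for every $\delta > 0$. To get the stated exponent you need a sharper contraction inequality than the raw double $\limsup$ provides --- for instance the standard consequence of contraction that there exist constants $k_1$ and $L$ with $\|g|_v\|_S \leq \tfrac{1}{2}\|g\|_S + L$ for every $g \in G$ and every $v \in X^{k_1}$ (equivalently, the existence of a finite nucleus), which one then iterates to obtain $\|g|_v\|_S \leq A\lambda^{|v|}\|g\|_S + B$ uniformly in $g$ and $v$. With such a lemma the $\epsilon$-slack disappears and your computation goes through with the correct exponent. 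This is exactly the obstacle you flagged at the end, but the resolution is a sharper contraction lemma rather than an appeal to $\preceq$.
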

In \cite[Lemma 2.13.9]{self_similar_groups}, it is shown that the contracting coefficient $\lambda$ satisfies $\frac{1}{\log_d(1 / \lambda)} \geq 1$. Hence, the upper bound achieved the above theorem is at least exponential.

For regular-branch groups that are not necessarily contracting the following super-polynomial lower bound is provided for their residual finiteness growth due to Bou-Rabee and Myropolska \cite{bou_rabee_myropolska}.
\begin{theorem}
Let $G$ be a finitely generated regular-branch group acting on a rooted $d$-regular tree. Then
$$
2^{n^{\frac{1}{\log_d(\lambda)}}} \preceq \RF_G(n).
$$
\end{theorem}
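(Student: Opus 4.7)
The plan is to construct, for each $m \geq 1$, an element $g_m \in G$ whose word length grows only singly exponentially in $m$, yet whose detection in any finite quotient of $G$ forces the quotient to have order at least $2^{d^m}$. Let $K \leq G$ be a finite-index subgroup realising the regular-branch property, so that $K$ geometrically contains $K^d$, and fix a nontrivial $k \in K$ of word length $\ell$. Define
\[
g_m = (k, 1, \ldots, 1)_m \in K
\]
by inserting $k$ at a single vertex $v_m \in X^m$ and identities at the remaining $d^m - 1$ vertices. Iterating the geometric containment $m$ times shows $g_m$ is genuinely an element of $K \leq G$; a bookkeeping argument reveals that each iteration inflates word length by at most a multiplicative factor $\Lambda > 1$ depending only on the branch data, so $\|g_m\|_S \leq \ell \Lambda^m$.

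The central step is to bound $\D_G(\{1\}, g_m) \geq 2^{d^m}$ up to subexponential corrections. By level transitivity of the $G$-action on $X^m$, for every vertex $v \in X^m$ there is a $G$-conjugate $g_m^{(v)}$ of $g_m$ supported only at $v$; these $d^m$ conjugates have pairwise disjoint supports, hence commute, and together generate a subgroup $L \leq \text{rist}_G(m)$ which decomposes as the internal direct product $\prod_{v \in X^m} \langle g_m^{(v)} \rangle$ by virtue of the geometric containment $K^{d^m} \hookrightarrow K$. For any $N \trianglelefteq_{\text{f.i.}} G$ with $g_m \notin N$, normality forces $g_m^{(v)} \notin N$ for every $v$, and $L \cap N$ is $G$-invariant under the permutation action of $G$ on the direct factors. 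A classification of $G$-invariant subgroups of $L$ that exclude the \emph{corner element} $g_m$ then yields $|L/(L \cap N)| \geq 2^{d^m}/C$ for some constant $C$, and hence $[G:N] \geq 2^{d^m}/C$.

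Assembling the two estimates: for any $n \geq \ell \Lambda^m$, taking $m$ maximal gives
\[
\RF_G(n) \succeq 2^{(n/\ell)^{\log_\Lambda d}} = 2^{n^{1/\log_d \Lambda}},
\]
which matches the claimed bound upon identifying $\Lambda$ with $\lambda$. The main obstacle I anticipate lies in the combinatorial classification step: a priori, the permutation action of $G$ on the $d^m$ factors of $L$ may factor through an iterated wreath product of a proper subgroup of $S_d$, producing $G$-invariant sublattices (diagonal, sum-zero, or more exotic types coming from the wreath structure) whose quotients have far smaller order than $2^{d^m}$. Ruling these out requires using that $g_m$ is supported at a \emph{single} vertex and is therefore as far as possible from any $G$-symmetric configuration, so that every proper $G$-invariant sublattice of $L$ not containing $g_m$ still has index at least $2^{d^m}$ up to a subexponential factor — a claim that should be extractable from the rigid stabiliser decomposition combined with the branching structure of the $G$-action on $X^m$.
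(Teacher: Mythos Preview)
The paper is a survey and does not prove this theorem; it attributes the result to Bou-Rabee and Myropolska and offers only a one-paragraph sketch of the idea. Your construction of the test elements $g_m=(k,1,\dots,1)_m$ and your word-length estimate $\|g_m\|\preceq\Lambda^m$ agree with what the sketch describes as ``estimates for the size of minimal length elements of level $n$''. The divergence is at the lower bound for $\D_G(\{1\},g_m)$. The paper's sketch hinges on the \emph{congruence subgroup property with respect to level stabilisers}: every finite-index normal subgroup of $G$ contains some $\mathrm{Stab}_G(m')$, so any finite quotient detecting an element of level $m$ is forced down to level at least $m+1$, and one then bounds $|G/\mathrm{Stab}_G(m+1)|$ from below. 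This is a global structural statement about \emph{all} finite-index normal subgroups of $G$, and it is the engine of the argument.

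Your approach tries to replace this global input with a local one: you look only at the intersection $L\cap N$ inside the direct product $L=\prod_{v\in X^m}\langle g_m^{(v)}\rangle$ and hope that every $G$-invariant subgroup of $L$ missing the corner element has index at least $2^{d^m}/C$. This is where the gap lies, and it is more serious than the ``exotic wreath-type sublattices'' you anticipate. Already in the simplest model---$\langle k\rangle$ cyclic of prime order $p$ with $G$ permuting the factors---the augmentation (sum-zero) hyperplane of $L\cong(\mathbb{Z}/p)^{d^m}$ is $G$-invariant, omits the corner element $g_m$, and has index only $p$. So the inequality $|L/(L\cap N)|\geq 2^{d^m}/C$ is simply false for a general $G$-invariant subgroup; to salvage it you would need to show that no thin invariant subgroup of this kind can arise as $L\cap N$ for a finite-index normal $N\trianglelefteq G$. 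But that is exactly the sort of control that the congruence subgroup property (or Grigorchuk's structure theorem that every nontrivial normal subgroup of a branch group contains some $\mathrm{rist}_G(m)'$) provides, and which your argument has discarded. In short, the direct-product bookkeeping cannot substitute for the congruence subgroup property; the latter is the missing ingredient.
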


This next theorem by Bou-Rabee and Myropolska \cite{bou_rabee_myropolska} provides super exponential lower bounds for the Gupta-Sidki p-group when $p \geq 5$ .
\begin{theorem}
Let $r>0$. Then there exists a prime $p$ such that if $G_p$ is the Gupta-Sidki $p$-group, then
$$
2^{n^{\frac{\ln(p)}{\ln(3)}}} \preceq \RF_{G_p}(n).
$$
In particular, $G_p$ for $p \geq 5$ has super-exponential residual finiteness growth.
\end{theorem}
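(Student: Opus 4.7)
The plan is to deduce this theorem by specializing the preceding lower bound of Bou-Rabee and Myropolska for regular-branch groups to the case $G = G_p$. First I would verify that $G_p$ fits the setup: $G_p$ is the classical regular-branch group acting on the $p$-regular rooted tree $T_X$ (with $|X| = p$), generated by the cyclic rotation $a$ at the root and by the self-similarly defined element $t \in \text{Stab}_{G_p}(1)$ with $t = (a, a^{-1}, 1, \ldots, 1, t)_1$. One checks in the standard way that the normal closure $K = \langle [a, t] \rangle^{G_p}$ has finite index in $G_p$ and geometrically contains $K^p$ of finite index, so $G_p$ is regular-branch over $K$, and the general theorem applies with $d = p$.

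Second I would identify the correct value of the branching constant $\lambda$ in the exponent $\frac{1}{\log_d(\lambda)}$ of the general bound. The essential feature of the recursion defining $t$ is that it acts non-trivially on exactly three of the $p$ first-level subtrees of $T_X$ — on the first by $a$, on the second by $a^{-1}$, and on the last by $t$ itself — regardless of how large $p$ is. Propagating this observation through the self-similar structure of $K$ shows that the effective branching parameter controlling how many independent coordinates can be prescribed at level $n$ is $\lambda = 3$. Substituting $d = p$, $\lambda = 3$ into the preceding theorem gives
$$
\RF_{G_p}(n) \succeq 2^{n^{1/\log_p(3)}} = 2^{n^{\log_3(p)}} = 2^{n^{\ln(p)/\ln(3)}},
$$
which is the claimed inequality. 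To address the prescribed $r > 0$, one observes that $\log_3(p) \to \infty$ as $p \to \infty$, so any choice $p > 3^r$ produces an exponent exceeding $r$. The "in particular" clause is then purely arithmetic: for $p \geq 5$ one has $\log_3(p) > 1$, so $n^{\log_3(p)}$ dominates every linear function $Cn$, and therefore $2^{n^{\log_3(p)}} \succ 2^{Cn}$ for every $C > 0$, meaning the growth is super-exponential.

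The main obstacle I anticipate is the careful justification of the value $\lambda = 3$. The general lower bound is stated in terms of an abstract branching parameter, and it is not immediate that the relevant $\lambda$ for $G_p$ should be independent of the tree degree $p$. Extracting this requires an honest inspection of how the recursion $t = (a, a^{-1}, 1, \ldots, 1, t)$ descends into $K$ and its level stabilisers; a naive guess based on the tree degree would produce a $p$-dependent exponent and a bound that degrades, rather than improves, as $p$ grows — the exact opposite of what makes the theorem interesting. All other ingredients are classical properties of $G_p$ (its structure as a regular-branch group over $K$) and elementary comparisons of exponentials.
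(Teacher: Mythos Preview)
Your arithmetic is fine—substituting $d=p$ and $\lambda=3$ into $1/\log_d(\lambda)$ does yield $\log_3 p = \ln p/\ln 3$—but the reduction to the preceding general theorem does not go through as written. As stated in the survey, the parameter $\lambda$ in that lower-bound statement is left unspecified (it cannot be the contraction coefficient, which is $<1$ and would make the exponent negative), so there is no black box into which one may simply insert $\lambda=3$. Your proposed reading of $\lambda$ as ``how many independent coordinates can be prescribed at level $n$'' is also not the quantity that drives the exponent: regular-branchness of $G_p$ over $K$ already lets one prescribe all $p^n$ first-level coordinates at level $n$, so that count does not single out the number $3$.

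The argument the paper sketches (following Bou-Rabee--Myropolska) is direct and does not route through an abstract $\lambda$. One invokes the congruence subgroup property for $G_p$, so that every finite quotient factors through some $G_p/\text{Stab}_{G_p}(k)$; one then exhibits elements of level $n$, i.e.\ in $\text{Stab}_{G_p}(n)\setminus\text{Stab}_{G_p}(n+1)$, whose word length is on the order of $3^n$, and bounds the order of the level-$n$ quotient from below. The constant $3$ enters precisely through this word-length estimate—the three non-trivial first-level sections of $t$ force the minimal length of a level-$n$ witness to grow like $3^n$—and combining this with the quotient-size bound yields the stated exponent. The ``honest inspection'' you anticipate would have to reproduce exactly these two estimates, so the detour through the general theorem gains nothing; the direct construction via the congruence subgroup property and level-$n$ elements is both what the paper describes and what is actually required.
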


The main idea behind both theorems for the lower bound is to use the congruence subgroup property with respect to level stabilisers to find elements which are difficult to distinguish from the identity using finite quotients. In particular, these elements are of level $n$ which implies that if you have a finite quotient which factors through a quotient by the level $k$ stabiliser, then these elements must be trivial in these quotients. One can then give bounds for the size of the quotient by the $n$-level stabiliser and give estimates for the size of minimal length elements of level $n$. For the upper bounds, one estimates how deep the level $n$-stabiliser needs to not contain your given element in terms of the word length.

There are many things one can study for residual finiteness growth of branch groups. For instance, it would be interesting if one could precisely compute the residual finiteness growth of Gupta-Sidki groups for all primes $p$. Therefore, we have the following modest question.
\begin{question}
Let $G_p$ be the Gupta-Sidki $p$-group where $p$ is prime. Compute the precise asymptotic growth of $\RF_{G_p}(n).$
\end{question}
\section{Conjugacy separability} 

Unlike the residual finiteness depth function, the conjugacy separability depth function does not behave well with respect to taking finite degree extensions or finite index subgroups. Indeed, Goryaga \cite{goryaga} gave an example of a non-conjugacy separable group that contained a conjugacy separable subgroup of index two, Martino and Minasyan \cite{martino_minasyan} constructed infinitely many examples of conjugacy separable groups that contained a subgroup of finite index that was not conjugacy separable. Therefore, results of this type are usually nontrivial to find.

\subsection{Virtually nilpotent and solvable groups}

For a finitely generated abelian group, every conjugacy class is equal to a singleton; hence the conjugacy separability depth function is equivalent to the residual finiteness depth function.

\begin{theorem}
Let $A$ be a finitely generated abelian group, then $$\Conj_A(n) \simeq \RF_A(n) \simeq \log(n).$$
\end{theorem}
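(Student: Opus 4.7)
The plan is to reduce the conjugacy separability depth function to the residual finiteness depth function, for which the asymptotics $\RF_A(n) \simeq \log(n)$ have already been established earlier in the text. The key observation that trivialises the conjugacy problem in this setting is that $A$ is abelian, so conjugation acts trivially on every element and hence $[g] = \{g\}$ for all $g \in A$.

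First, I would unpack the definition of the depth function with this observation. For $g, h \in A$ with $h \notin [g]$, the condition $h \notin [g]N = gN$ for a finite index normal subgroup $N \normleq A$ is equivalent to $g^{-1}h \notin N$, so
$$\D_A([g], h) \;=\; \min\bigl\{|A/N| \;:\; N \normfileq A,\; g^{-1}h \notin N\bigr\} \;=\; \F_A(g^{-1}h).$$
Second, I would translate the maximum over balls. If $g, h \in B_A(n)$, then by the triangle inequality for the word metric, $\|g^{-1}h\|_S \leq 2n$, so $g^{-1}h \in B_A(2n)$. Taking the maximum over all admissible pairs yields
$$\Conj_A(n) \;\leq\; \RF_A(2n),$$
which gives $\Conj_A(n) \preceq \RF_A(n)$. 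The reverse inequality $\RF_A(n) \leq \Conj_A(n)$ is already recorded in Section 2 of the paper, using that $[1] = \{1\}$ and hence residual finiteness sits inside conjugacy separability by specialising to pairs $(1, g)$.

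Combining the two inequalities yields $\Conj_A(n) \simeq \RF_A(n)$. The final step is to invoke the earlier theorem for infinite finitely generated abelian groups, which gives $\RF_A(n) \simeq \log(n)$, to conclude $\Conj_A(n) \simeq \log(n)$. There is no real obstacle in this proof: the commutativity of $A$ collapses conjugacy separability onto residual finiteness, and the only technical care needed is the constant factor of $2$ in the word length estimate, which is absorbed by the asymptotic equivalence $\simeq$.
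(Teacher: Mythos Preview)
Your proposal is correct and follows the same idea the paper records just before the theorem: in an abelian group every conjugacy class is a singleton, so the conjugacy depth function collapses to the residual finiteness depth function, and then one invokes the earlier $\RF_A(n)\simeq\log(n)$ result. You have simply made explicit the translation $\D_A([g],h)=\F_A(g^{-1}h)$ and the factor-of-$2$ ball estimate that the paper leaves implicit.
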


The situation for finitely generated nilpotent groups $N$ is very different, although conjugacy classes are still well-behaved, in the sense that if two elements $g, h \in N$ are conjugate, then $g = h x$ for some $x \in \gamma_2(N) = [N,N]$. This allows for induction on nilpotence class, since by definition the lower central series $\gamma_i(N)$ defined inductively as $\gamma_{i+1}(N) = [N,\gamma_i(N)]$ becomes trivial after a finite number of steps. A first step in this direction was made by Pengitore \cite{Pengitore_1} by giving lower and upper bounds for $\Conj_N(n)$ for a finitely generated nilpotent group. Later, this was generalised to virtually nilpotent groups by Der\'{e} and Pengitore in the paper \cite{DerePengitore_1}, where the authors use the notion of twisted conjugacy separability. A twisted conjugacy class $[g]_\varphi$ for an automorphism $\varphi: G \to G$ is defined as the elements $$[g]_\varphi = \left\{ h g \varphi(h)^{-1} \mid h \in G\right\}.$$ For the identity automorphism, we indeed retrieve the regular conjugacy classes. As a consequence of an effective version for separating twisted conjugacy classes, we get the following. 
\begin{theorem}
Let $G$ be a finitely generated virtually nilpotent group. If $G$ is virtually abelian, then there exists $k > 0$ such that $$ \log(n) \preceq \Conj_G(n) \preceq \log(n)^k, $$ if not, then there exists $1 < k_1 < k_2$ such that $$n^{k_1} \preceq \Conj_G(n) \preceq n^{k_2}.$$
\end{theorem}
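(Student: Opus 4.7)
The plan is to reduce conjugacy separability in $G$ to an effective form of twisted conjugacy separability inside a finite index nilpotent normal subgroup $N \trianglelefteq G$. Fix coset representatives $t_1, \ldots, t_r$ for $G/N$, and let $\varphi_i \colon N \to N$ denote conjugation by $t_i$. If $g_1, g_2 \in G$ satisfy $g_1 \not\sim_G g_2$ with $\|g_j\|_S \leq n$, then their images in the finite quotient $G/N$ must be conjugate there (otherwise $G/N$ already separates them). After passing to the same coset, one may write $g_j = h_j t_i$ with $h_j \in N$, and the conjugation formula $(m t_j)(h_1 t_i)(m t_j)^{-1}$ shows that nonconjugacy in $G$ translates into $h_2$ avoiding a finite union of twisted conjugacy classes in $N$ with respect to automorphisms $\psi_j$ built from the $\varphi_i$ together with the permutation action of the $t_i$ on the cosets.

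The heart of the argument is establishing effective twisted conjugacy separability for a finitely generated nilpotent $N$: given an automorphism $\varphi$ and elements $h_1, h_2 \in N$ of word length at most $n$ with $h_2 \notin [h_1]_\varphi$, produce a $\varphi$-invariant finite index normal subgroup $M \trianglelefteq N$ in which the images remain non-twisted-conjugate, with $[N:M]$ controlled by $n$. I would proceed by induction on the nilpotency class via the lower central series. In the abelian base case, $[h_1]_\varphi = h_1 \cdot \Im(\varphi - \id)$, and separating $h_2$ from this coset reduces to residual finiteness of the abelian quotient $N / \Im(\varphi - \id)$, giving logarithmic depth via the $\lcm$ estimates underlying the Prime Number Theorem. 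For the inductive step, twisted conjugacy modulo $\gamma_c(N)$ reduces to the lower class case, and the residual obstruction in $\gamma_c(N)$ is handled via the linear map induced by $\varphi - \id$ on the central layer, with Mal'cev coordinates controlling the word length distortion between levels.

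Translating back, a finite quotient of $N$ separating the twisted classes extends to a finite quotient of $G$ of size at most $[G:N] \cdot [N:M]^{[G:N]}$, preserving the asymptotic growth. In the virtually abelian case this yields the upper bound $\log(n)^k$, while in the non-virtually abelian case the polynomial distortion across the $c$-step lower central series produces the polynomial upper bound $n^{k_2}$ with $k_2$ depending on the Hirsch length and nilpotency class of $N$. For the lower bounds, $\log(n) \preceq \Conj_G(n)$ in the virtually abelian case is immediate from $\Conj_G(n) \geq \RF_G(n)$ together with the infinite cyclic subgroup lower bound. For the non-virtually abelian case, $n^{k_1} \preceq \Conj_G(n)$ extends Pengitore's earlier construction for finitely generated nilpotent groups, in which pairs of elements differing by a commutator deep in $\gamma_c(N)$ are exhibited whose nonconjugacy forces a quotient of large order; the twisted conjugacy reduction carries this lower bound to the virtually nilpotent setting.

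The most delicate obstacle will be the inductive step of effective twisted conjugacy separability, namely controlling how $\varphi$ interacts with the successive quotients $\gamma_i(N)/\gamma_{i+1}(N)$. Unlike ordinary conjugacy, where the obstruction lives simply in the commutator subgroup, twisted conjugacy obstructions involve the images of $\varphi - \id$ at each layer, which may have different ranks and may not align with any single Mal'cev basis. Ensuring polynomial control of word length throughout the induction, and arranging that the separating subgroup $M$ can be chosen $\varphi$-invariant (ideally characteristic) so that it extends cleanly to a normal subgroup of $G$ of comparable index, will require careful bookkeeping between the algebraic structure of $N$ and the geometric word length inherited from $G$.
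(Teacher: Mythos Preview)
Your proposal is correct and follows essentially the same approach the paper indicates: this is a survey, so the theorem is not proved in the paper itself but is attributed to Der\'{e} and Pengitore, who use precisely the reduction of conjugacy in $G$ to effective twisted conjugacy separability in a finite index nilpotent normal subgroup, together with Pengitore's earlier polynomial bounds for nilpotent groups via induction on nilpotency class along the lower central series. Your sketch of the abelian base case, the inductive step through $\gamma_i(N)/\gamma_{i+1}(N)$, the extension back to $G$ via the characteristic core, and the lower bounds via $\RF_G \preceq \Conj_G$ and Pengitore's commutator construction all align with the method the paper cites.
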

The first statement is not given explicitly in the paper, but follows from the methods in the paper. For almost all nilpotent groups, the exact function is unknown, with the exception of the Heisenberg groups which were studied in Pengitore \cite{Pengitore_1}. Again, it is an open question whether the conjugacy separability depth function is a quasi-isometric invariant for (virtually) nilpotent groups.

\begin{question}
Do there exist finitely generated nilpotent groups $N_1$ and $N_2$ such that $N_1 \sim_{QI} N_2$ but $\Conj_{N_1} \not \simeq \Conj_{N_2}$?
\end{question}

For polycyclic groups $G$ that are not virtually nilpotent, the behaviour of $\Conj_G(n)$ is widely open, although it is expected to be exponential.

\begin{question}
Show that if $G$ is virtually polycyclic but not virtually nilpotent, then $\Conj_G(n) \simeq \exp(n)$.
\end{question}

Beyond polycyclic groups, there are some results on effective conjugacy separability for other classes of finitely generated conjugacy separable solvable groups. For instance, the following result by Ferov and Pengitore \cite{ferov_pengitore} provides exact asymptotic bounds for conjugacy separability depth function for lamplighter groups, which provides the first asymptotic characterisation of conjugacy separability for a class of groups outside of abelian groups. Furthermore, exponential lower bounds and super exponential upper bounds have been provided for conjugacy separability for wreath products of abelian groups where the base group is finite and the acting is infinite.
\begin{theorem}
Let $A$ be a finite abelian group, and let $B$ be an infinite, finitely generated abelian group. If $B$ has torsion free rank $1$, then 
$$
\Conj_{A \wr B}(n) \approx 2^n.
$$ More generally, if $B$ is an infinite finitely generated abelian group of torsion free rank $k>1$, then
$$
2^n \preceq \Conj_{A \wr B}(n) \preceq 2^{n^{2k}}.
$$
\end{theorem}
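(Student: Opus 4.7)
My plan is to analyze conjugacy in $G = A \wr B = A^{(B)} \rtimes B$ explicitly, then bound $\Conj_G$ from both sides using finite quotients of the form $A \wr (B/B')$ for finite-index subgroups $B' \leq B$. Writing elements as pairs $(f, b)$ and using abelianness of $B$, a direct computation yields $(g,c)(f,b)(g,c)^{-1} = (c \cdot f + (1-b) g, b)$, where $A^{(B)}$ is viewed as a module over $\mathbb{Z}[B]$ via translation. Consequently $(f_1, b_1) \sim_G (f_2, b_2)$ if and only if $b_1 = b_2 = b$ and $f_1 - c \cdot f_2 \in (1-b) A^{(B)}$ for some $c \in B$, which splits the separation problem into distinguishing $B$-components in a finite quotient of $B$ and distinguishing cosets of the ideal $(1-b) A^{(B)}$ modulo $B$-translation.

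For the upper bound I would use the quotients $\pi_m \colon G \to A \wr (B/mB)$ of order $|A|^{m^k} m^k = 2^{O(m^k)}$ and show they preserve non-conjugacy of length-$n$ elements for $m$ polynomial in $n$. The base components of such elements are supported in a ball of radius $n$ and the $B$-parts have length at most $n$. For rank $k = 1$, the ring $\mathbb{Z}[B]/(t^m - 1)$ is a principal ideal ring whose ideals are given by divisors of $t^m - 1$, and a direct analysis of where $(1-t^r)$ for $|r| \leq n$ lands in the quotient shows $m = O(n)$ suffices, yielding the $2^{O(n)}$ upper bound. For $k \geq 2$, the multivariate ring $\mathbb{Z}[B]/(t_1^m - 1, \ldots, t_k^m - 1)$ requires $m = O(n^2)$, obtained via a Bezout-type estimate quantifying how the ideal $(1 - b)$ of total degree at most $n$ interacts with the congruence ideal, giving the $2^{O(n^{2k})}$ upper bound.

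For the lower bound I would exhibit an explicit family of non-conjugate pairs of word length $\Theta(n)$ forcing exponentially large separating quotients. Fix $A = \mathbb{F}_2$ and $b \in B$ of infinite order, let $n$ be a power of $2$, and set $x_n = (ab)^n = (f_n, b^n)$ with $f_n = \sum_{i=0}^{n-1} b^i \cdot \delta_0$, versus $y_n = b^n$. The conjugacy criterion gives non-conjugacy since $[f_n]$ is the all-ones vector in $A^{(\langle b \rangle)}/(1-b^n) A^{(\langle b \rangle)} \cong \mathbb{F}_2^n$. For any finite quotient $\pi = A \wr (B/B')/N$ with $\ell$ the order of the image of $b$, the separation condition forces $n \mid \ell$; otherwise $[f_n]$ reduces to $0$ in $A^{(\langle \bar b\rangle)}/(1 - \bar b^n)$ by parity. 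Once $n \mid \ell$, the $(t+1)$-primary component of $\mathbb{F}_2[\langle \bar b \rangle] \cong \mathbb{F}_2[t]/(t^\ell - 1)$ is a local ring $\mathbb{F}_2[t]/(t+1)^M$ with $M \geq n$, and in it the image of $f_n$ equals $(t+1)^{n-1}$. Since this element lies in every proper nonzero ideal of the local ring, the quotient of the base must have order at least $2^n$. For rank $k > 1$ the same bound follows by restricting $b$ to a rank-$1$ subgroup of $B$.

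The main obstacle is the lower bound's analysis of non-standard finite quotients, as the $B$-invariant submodule $N$ can be chosen flexibly. The key ideal-theoretic lemma I would need is that no proper ideal of the local ring $\mathbb{F}_2[t]/(t+1)^M$ containing $(1-t^n) = (t+1)^n$ avoids the element $(t+1)^{n-1}$ other than $(1-t^n)$ itself, which is immediate from the totally ordered chain of ideals $((t+1)^i)$; the other irreducible factors of $t^\ell - 1$ contribute only multiplicatively to the quotient size and cannot shrink the estimate. For the higher-rank upper bound, the technical heart is showing that $(1 - b) \subset \mathbb{Z}[B]$ for $b$ of total degree at most $n$ embeds faithfully (up to the $c$-translate ambiguity) into $\mathbb{Z}[B/mB]$ only for $m = O(n^2)$, reflecting a quadratic dependence of the effective ideal degree on the generator degree in multivariate polynomial rings.
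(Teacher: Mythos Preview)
The paper is a survey: this theorem is quoted from Ferov--Pengitore \cite{ferov_pengitore} and no proof is given here, so there is nothing in the present paper to compare your argument against line by line. Your overall strategy---reducing conjugacy in $A\wr B$ to the module-theoretic condition $f_1-c\cdot f_2\in(1-b)A^{(B)}$, passing to quotients $A\wr(B/B')$, and analysing the resulting group ring---is the natural one and is indeed the framework used in the cited source.

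That said, your lower bound argument has a genuine gap. You parametrise finite quotients as $A\wr(B/B')/N$ with $N$ a $B$-invariant submodule of the base, but an arbitrary normal subgroup $K\trianglelefteq A\wr(B/B')$ need not lie inside the base: it can project onto a nontrivial subgroup $D\leq B/B'$, and then $|Q|=|A^{(B/B')}/K_0|\cdot|C/D|$ with $K_0=K\cap A^{(B/B')}$, which is potentially much smaller than what you account for. Normality forces $I_D\cdot A^{(B/B')}\subseteq K_0$ (via commutators), and one must then argue that the image of $f_n$ in the $D$-coinvariants $A^{(C/D)}$ being nonzero already forces $|C/D|\geq n$ and $|A^{(C)}/K_0|\geq 2^n$; you do neither step. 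Relatedly, your claim that $(t+1)^{n-1}$ ``lies in every proper nonzero ideal'' of $\mathbb{F}_2[t]/(t+1)^M$ is only correct when $M=n$; for $M>n$ it is false, though the desired conclusion $|{\rm base}|\geq 2^n$ still follows by the chain-of-ideals argument applied to $((t+1)^n)+N$ rather than to $N$ alone. Finally, you fix $A=\mathbb{F}_2$, but the theorem is stated for arbitrary finite abelian $A$; you should instead pick $a\in A$ of prime order $p$ and work over $\mathbb{F}_p$. The upper-bound sketch is plausible but the ``B\'ezout-type estimate'' giving $m=O(n^2)$ for $k\geq 2$ is asserted rather than argued.
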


Continuing in this direction, super exponential upper and lower bounds have been provide for conjugacy separability of wreath products of abelian groups where both the base group and acting group are infinite by Ferov and Pengitore in \cite{ferov_pengitore2}.
\begin{theorem}
Let $A$ and $B$ be infinite finitely generated abelian group. If $B$ has torsion free rank $1$, then
$$
(\log n)^n \preceq \Conj_{A \wr B}(n) \preceq (\log n)^{n^2}.
$$
If $B$ has torsion free rank $k>1,$ then
$$
(\log n)^n \preceq \Conj_{A \wr B}(n) \preceq (\log n)^{n^{2k+2}}.
$$
\end{theorem}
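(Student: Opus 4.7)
The plan is to establish the upper and lower bounds separately using the standard identification of elements of $A \wr B$ with pairs $(f,b)$ where $f : B \to A$ has finite support and $b \in B$, together with the effective residual finiteness $\RF_A(n) \simeq \RF_B(n) \simeq \log n$ for infinite finitely generated abelian groups. Since $B$ is abelian, a direct computation shows that $(f_1, b_1)$ and $(f_2, b_2)$ are conjugate in $A \wr B$ exactly when $b_1 = b_2 =: b$ and $d \cdot f_1 - f_2 \in (b - 1) A^{(B)}$ for some $d \in B$; rephrasing this in the group ring $A[B]$ supplies the algebraic framework for both directions, in close analogy to the finite-$A$ case treated in the previous theorem.

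For the upper bound, if the $B$-components of two non-conjugate length-$n$ elements differ, residual finiteness of $B$ already produces a quotient of size $O(\log n)$. Otherwise fix the common element $b$; the supports of $f_1, f_2$ lie in $\Ball_B(n)$ of cardinality $\asymp n^k$, and their coefficients have $A$-norm at most $n$. The strategy is to construct a congruence quotient $(A/A') \wr (B/K)$ that still detects non-conjugacy: choose $K \leq B$ of finite index large enough that the failure of $d \cdot f_1 - f_2 \in (b-1) A^{(B)}$ persists after projection to $A[B/K]$, which requires $[B:K]$ of order $n^2$ in rank one and $n^{2k+2}$ in higher rank. The extra factors beyond the ball volume $n^k$ absorb the possible word length of a would-be conjugator and the degree inflation caused by multiplication by $b-1$ in the group ring. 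Then choose $A' \leq A$ of index $O(\log n)$ separating the residual $A$-discrepancy via effective residual finiteness of $A$. The resulting quotient has order at most $(\log n)^{[B:K]} \cdot [B:K]$, yielding the stated upper bounds.

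For the lower bound, exhibit non-conjugate pairs of word length $\asymp n$ whose separation requires quotients of size $(\log n)^n$. Fix $\beta \in B$ of infinite order and $a \in A$ of infinite order (passing to a free direct summand of $A$ if it has torsion). Construct lamp configurations $f_n, f'_n$ supported on $\beta^0, \beta^1, \ldots, \beta^{n-1}$ with coefficients $a_i, a_i' \in \langle a \rangle$ so that (i) no cyclic shift of $(a_0, \ldots, a_{n-1})$ equals $(a_0', \ldots, a_{n-1}')$, and (ii) in any congruence quotient $(A/A') \wr (B/K)$ with $[B:K]$ polynomial in $n$ and $[A:A'] < (\log n)^n$, some coordinate equality is forced to fail. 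Concretely, use the prime number theorem to produce $n$ pairwise coprime integer multipliers of size $O(n \log n)$ encoding independent residual obstructions of the lcm-type used in the proof of $\RF_\mathbb{Z}(n) \simeq \log n$; each individual obstruction demands a quotient of $A$ of size $\log n$, and by the pairwise coprimality they cannot all be realised in a quotient of $A$ of index smaller than $(\log n)^n$, even after applying a single shift of $B/K$ to align the supports.

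The principal technical obstacle is the quantitative counting in the upper bound, in particular pinning down the exponent $2k+2$ rather than a larger polynomial: one has to simultaneously control the ball volume in $B$ (contributing $n^k$), the length of a would-be conjugator (a further $n$), and the degree inflation from multiplication by $b-1$ in $A[B/K]$ (an additional $n^{k+1}$), while ensuring that the chosen congruence quotient does not identify distinct cosets of the ideal $(b-1) A^{(B)}$. The rank one case admits the sharper exponent $n^2$ because Euclidean divisibility in $\mathbb{Z}$ collapses some of these sources of growth. A second delicate point arises in the lower bound, where one must verify that the $n$ chosen residual obstructions remain mutually independent after passage to every congruence quotient and after any simultaneous cyclic shift of coordinates in $B/K$; this calls for a Chinese remainder argument together with a careful choice of prime moduli to prevent accidental alignment from collapsing obstructions.
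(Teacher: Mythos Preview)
The paper is a survey and does not contain a proof of this theorem; it merely states the result and attributes it to Ferov and Pengitore \cite{ferov_pengitore2}. There is therefore no proof in the paper against which to compare your proposal.

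That said, your outline is broadly aligned with the strategy one would expect from the cited source: the conjugacy criterion in $A \wr B$ via the group ring $A[B]$ is correct, congruence quotients $(A/A') \wr (B/K)$ are the natural separating homomorphisms, and the lower bound via lamp configurations with coprime residual obstructions in the infinite cyclic direct summand of $A$ is the standard mechanism. However, your proposal remains a plan rather than a proof. In particular, the exponent bookkeeping for the upper bound (why exactly $n^2$ in rank one and $n^{2k+2}$ in higher rank) is asserted rather than derived, and the lower-bound construction does not actually specify the elements $(f_n, b_n)$ and $(f_n', b_n')$ nor verify that \emph{every} finite quotient of $A \wr B$ of the stated size fails to separate them; you only discuss congruence quotients of the form $(A/A') \wr (B/K)$, but an arbitrary finite quotient of $A \wr B$ need not factor through one of these, and bridging that gap is nontrivial.
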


As an application of the Magnus embedding of the free metabelian group into the wreath product of infinite, finitely generated abelian groups, we obtain the following upper asymptotic bound for conjugacy separability of free metabelian group of rank $k$.
\begin{theorem}
If $S_{2,m}$ is the free metabelian group of rank $k$, then
$$
\Conj_{S_{2,m}}(n) \preceq (\log n)^{n^{2m+2}}.
$$
\end{theorem}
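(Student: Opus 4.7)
The plan is to exploit the Magnus embedding $\iota \colon S_{2,m} \hookrightarrow \mathbb{Z}^m \wr \mathbb{Z}^m$, which realises the free metabelian group of rank $m$ as a subgroup of the wreath product $A \wr B$ with $A = B = \mathbb{Z}^m$; in particular the top group has torsion-free rank $k = m$, matching the exponent in the preceding theorem. The standard formula sends each free generator $x_i$ of $S_{2,m}$ to a fixed element of bounded length in a natural finite generating set of $\mathbb{Z}^m \wr \mathbb{Z}^m$, so $\iota$ is Lipschitz with respect to the word metrics: there exists $C > 0$ such that $\|\iota(g)\|_T \leq C \|g\|_S$ for all $g \in S_{2,m}$.

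The algebraic heart of the argument is the classical theorem of Matthews that the Magnus embedding is \emph{conjugacy-distinguishing}: for $g, h \in S_{2,m}$, one has $g \sim_{S_{2,m}} h$ if and only if $\iota(g) \sim_{\mathbb{Z}^m \wr \mathbb{Z}^m} \iota(h)$. This is precisely what allows a conjugacy separation in the wreath product to be pulled back to $S_{2,m}$. Given non-conjugate $g, h \in S_{2,m}$ with $\|g\|_S, \|h\|_S \leq n$, the images $\iota(g), \iota(h)$ are non-conjugate in $\mathbb{Z}^m \wr \mathbb{Z}^m$ and have word length at most $Cn$. The preceding theorem then furnishes a surjection $\varphi \colon \mathbb{Z}^m \wr \mathbb{Z}^m \to Q$ onto a finite group $Q$, with $|Q|$ bounded above by a constant multiple of $(\log(Cn))^{(Cn)^{2m+2}}$, such that $\varphi(\iota(g))$ and $\varphi(\iota(h))$ are not $Q$-conjugate. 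The composition $\varphi \circ \iota$ factors through the finite subgroup $\varphi(\iota(S_{2,m})) \leq Q$, and non-$Q$-conjugacy a fortiori implies non-conjugacy in any subgroup containing both elements. Hence $\D_{S_{2,m}}([g], h) \leq |Q|$, and maximising over all pairs $g, h \in \Ball_{S_{2,m}}(n)$ with $h \notin [g]$ yields $\Conj_{S_{2,m}}(n) \preceq (\log n)^{n^{2m+2}}$.

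The main (and essentially only) obstacle is the conjugacy-distinguishing property of $\iota$: without it, non-conjugate pairs in $S_{2,m}$ might map to conjugate pairs in the wreath product, and no quotient of $\mathbb{Z}^m \wr \mathbb{Z}^m$ could then separate them. This fact is classical but non-trivial, and is the reason the Magnus embedding is the standard tool for proving conjugacy separability of free metabelian groups in the first place. The remaining ingredients, namely the Lipschitz control on $\iota$ and the compatibility of generating sets, are routine. I note in passing that this strategy cannot be reversed to produce a matching lower bound, since a separating quotient of $S_{2,m}$ need not extend over the ambient wreath product; the asymptotics of $\Conj_{S_{2,m}}(n)$ from below remain a genuinely separate question.
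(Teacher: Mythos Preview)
Your proposal is correct and follows precisely the approach indicated in the paper, which derives the bound ``as an application of the Magnus embedding of the free metabelian group into the wreath product of infinite, finitely generated abelian groups'' together with the preceding theorem on $\Conj_{A \wr B}$. You have supplied the details the survey omits, including the essential conjugacy-distinguishing property of the Magnus embedding (Matthews' theorem) and the Lipschitz control needed to transfer the bound; these are exactly the ingredients the paper's one-line justification is gesturing at.
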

A natural consequence following the above theorem is to see if one can provide nontrivial lower bounds for conjugacy separability of the free metabelian group. More generally, it is well known that all free solvable groups of arbitrary rank and derived length are conjugacy separable, so one may be interested in the asymptotic behaviour of conjugacy separability of these groups.
\begin{question}
Let $S_{d,m}$ be the free solvable group of derived length $d$ and rank $m$. Compute the asymptotic behaviour of $\Conj_{S_{d,m}}(n).$
\end{question}
Combining results from \cite{ferov_pengitore}, \cite{ferov_pengitore2} and \cite{ferov_pengitore3}, we obtain the following asymptotic bounds for wreath products of nilpotent groups where the base group is abelian and the acting group is a general nilpotent group. Note that a wreath product of nilpotent groups if conjugacy separable if and only if the base group is abelian.
\begin{theorem}
Let $A$ be a finitely generated abelian group, and let $N$ be an infinite, finitely generated abelian group. There exists a natural number $d$ such that if $A$ is finite, then
$$
2^n \preceq \Conj_{A \wr N}(n) \preceq 2^{n^{n^{d}}}.
$$
If $A$ is infinite, then
$$
(\log n)^n \preceq \Conj_{A \wr N}(n) \preceq n^{n^{n^{d}}}.
$$
\end{theorem}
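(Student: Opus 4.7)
The plan is to exploit the wreath product decomposition $A \wr N = A^{(N)} \rtimes N$. Writing elements as pairs $(f, m)$ with $f \colon N \to A$ of finite support and $m \in N$, and using that $N$ is abelian, conjugation by $(h, t)$ sends $(f, m)$ to $(h + T_t f - T_m h, m)$, where $T_s$ denotes translation by $s \in N$ acting on $A^{(N)}$. Thus $(f_1, m_1)$ and $(f_2, m_2)$ are conjugate if and only if $m_1 = m_2 =: m$ and $f_2 - T_t f_1 \in \Im(\id - T_m)$ for some $t \in N$. This coboundary-type obstruction is what must be certified to persist in a finite quotient.

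The lower bounds are inherited from the earlier Ferov--Pengitore theorems on wreath products of abelian groups stated above, since the present hypotheses are a special case. When $A$ is finite, the $2^n$ bound is witnessed by families $(f_n, e)$ with $f_n$ taking a fixed nontrivial value on a large ball in $N$, which forces a separating quotient of $N$ to have exponential index in order to recognise the support. When $A$ is infinite, the $(\log n)^n$ bound comes from the same geometric construction enriched by letting the values of $f_n$ run through an arithmetic progression in $A$, multiplying a logarithmic $A$-cost onto every one of the $n$ support points.

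For the upper bound, given non-conjugate $(f_1, m_1), (f_2, m_2) \in \Ball_{A \wr N}(n)$, I construct a separating quotient of the form $(A/I) \wr (N/L)$ with $I \normleq A$ and $L \fileq N$. If $m_1 \neq m_2$ in $N$, residual finiteness of abelian $N$ supplies $L$ with $[N:L] \preceq (\log n)^{\operatorname{rank}(N)}$, so the projection onto $N/L$ already separates the two elements. If $m_1 = m_2 = m$, $L$ (and $I$) must be chosen fine enough that the image of $f_2 - T_t f_1$ still avoids the image of $\id - T_{\bar m}$ in $(A/I)^{(N/L)}$ for every admissible $\bar t$. A uniform such choice is to take $L$ as a power of a principal congruence subgroup of $N$ for which the order of $T_m$ on $(A/I)^{(N/L)}$ is controlled. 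Bounding $[N:L] \preceq n^{n^d}$ and $|A/I|$ by a constant (when $A$ is finite) or by $n^{n^d}$ (when $A$ is infinite), and expanding $|A/I|^{[N:L]} \cdot [N:L]$, produces the asserted bounds $2^{n^{n^d}}$ and $n^{n^{n^d}}$ after absorbing constants into $d$.

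The main obstacle is the case $m_1 = m_2 \neq e$ in which $f_1, f_2$ differ by a coboundary that only degenerates on a fine sublattice of $N$: a naive choice of $L$ risks rendering them accidentally conjugate modulo $L$, so the argument must interlock the index $[N:L]$ with $\|m\|$, the support diameter of the $f_i$, and the order of $T_m$ in the target. Packaging these estimates so that a single exponent $d$ absorbs the ranks and torsion data of both $A$ and $N$ simultaneously is the crux of the argument, and it is this coarse but uniform bookkeeping that produces the iterated-exponential shape of the final estimate rather than a cleaner polynomial one.
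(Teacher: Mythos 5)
The survey offers no proof of this statement: it is recorded as a combination of the results of Ferov--Pengitore cited just before it, and the surrounding text makes clear that the intended hypothesis is that $N$ is an infinite finitely generated \emph{nilpotent} group (the word ``abelian'' in the statement is a slip, and likewise the second ``$A$ finite'' clause). This matters for your argument in two ways. First, if one reads the statement literally with $N$ abelian, it is an immediate corollary of the two preceding theorems, which give strictly sharper upper bounds ($2^{n^{2k}}$ and $(\log n)^{n^{2k+2}}$); you invoke those theorems for the lower bounds but then rebuild a weaker upper bound from scratch, which is redundant. Second, and more seriously, your entire analysis hinges on the computation $(h,t)(f,m)(h,t)^{-1} = (h + T_t f - T_m h, m)$, which uses $tmt^{-1}=m$; for the intended nilpotent $N$ the conjugacy criterion involves the $N$-conjugacy class of $m$ and a twisted coboundary condition, and the reduction to the abelian case (which is how the cited papers actually proceed, via conjugacy separability of $N$ and careful control of centralisers) is precisely where the extra tower of exponents in $2^{n^{n^d}}$ comes from. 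Your sketch never engages with this.

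Even within the abelian setting, the upper bound has a genuine gap at the step you yourself flag as the crux. Saying that $L$ should be ``a power of a principal congruence subgroup of $N$ for which the order of $T_m$ on $(A/I)^{(N/L)}$ is controlled'' is a restatement of the goal, not a construction. The substantive point is this: when $m$ has infinite order, $\Im(\id - T_m)$ is the kernel of the system of coset-sum maps $A^{(N)} \to A$ over cosets of $\langle m \rangle$, so non-conjugacy of $(f_1,m)$ and $(f_2,m)$ is witnessed by the coset-sum vectors of $f_1$ and $f_2$ differing up to translation; to preserve this in $(A/I)\wr(N/L)$ one must choose $L$ so that the finitely many relevant cosets of $\langle m\rangle$ meeting $\supp(f_1)\cup\supp(f_2)$ do not collapse, so that no \emph{new} translation $\bar t \in N/L$ (one that does not lift to a witness in $N$) aligns the reduced vectors, and choose $I$ so that the nonzero differences of coset sums survive in $A/I$. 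Each of these requires a quantitative argument interlocking $\|m\|$, the support diameter, and the subgroup growth of $N$, and it is exactly this analysis that produces (and justifies) the exponent $n^{n^d}$ for $[N:L]$ that you simply posit. Without it, the final arithmetic $|A/I|^{[N:L]}\cdot[N:L] \preceq 2^{n^{n^{d}}}$ (respectively $n^{n^{n^{d}}}$) is bookkeeping applied to an unproved estimate.
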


\subsection{Improving upper and lower bounds for free groups and surface groups}
Another class of groups for which the conjugacy separability depth function has been studied are free groups and surface groups in Lawton, Louder, and McReynolds \cite{LLM}. Both types of groups satisfy the property that for every $g \in G$, there exists a representation $$\rho_g: G \to \SL(n_g,\C)$$ such that if $h \in G$ is not conjugate to $g$, then $\Tr(\rho_g(g)) \neq \Tr(\rho_g(h))$. The coefficients of the elements $\rho_g(G) \subset \SL(n_g,\C)$ lie in a certain ring $R$, and then the authors use the fact that the difference in trace is non-zero to find a finite quotient field $F$ of the ring $R$ such that the trace is still non-zero, leading to a finite group $\SL(n_g,F)$ in which the elements are not conjugate. By estimating the dimension $n_g$ in terms of the word length of $g$, the following upper bound is found.

\begin{theorem}
\label{thm:conjfree}
Let $G$ be a surface group or a free group of finite rank, then the conjugacy separability depth function satisfies $$\Conj_G(n) \preceq n^{n^2}.$$
\end{theorem}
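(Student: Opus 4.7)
The plan is to make the representation-theoretic sketch above the theorem effective. Let $g, h \in G$ be non-conjugate with $\|g\|_S, \|h\|_S \le n$. By hypothesis, there is a representation $\rho_g \colon G \to \SL(n_g, \C)$ with $\Tr(\rho_g(g)) \neq \Tr(\rho_g(h))$; after a fixed choice of basis, we may arrange that $\rho_g$ lands in $\SL(n_g, R)$ for $R$ the order in a number field $K$ generated by the matrix entries of $\rho_g(s)$ as $s$ runs over a fixed finite generating subset $S$ of $G$. The key quantitative refinement at this stage is to ensure, as part of the construction of $\rho_g$, that both $[K : \mathbb{Q}]$ and the height of the entries of $\rho_g(s)$ are bounded by absolute constants depending only on $G$, so that arithmetically only the dimension $n_g$ varies with $g$.

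The core of the argument is then the linear dimension bound $n_g \preceq \|g\|_S \le n$. For free groups one arranges this by taking $\rho_g$ inside an explicit family built from tensor and symmetric powers of a faithful integral representation $F_k \hookrightarrow \SL(2,\mathbb{Z})$, where the number of factors needed to separate $g$ from every non-conjugate element is controlled by the cyclic word length of $g$; the analogous construction for surface groups proceeds through faithful representations into $\SL(2,\C)$ and their symmetric powers over the $\SL(2,\C)$-character variety. Granted $n_g \preceq n$, the matrices $\rho_g(g)$ and $\rho_g(h)$ are products of at most $n$ bounded generator matrices, so the logarithmic heights of their entries are at most $Cn$, and the difference $\alpha = \Tr(\rho_g(g)) - \Tr(\rho_g(h)) \in R$ is a nonzero algebraic integer of absolute norm at most $\exp(C'n)$.

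By the prime number theorem there is a rational prime $p \preceq n\log n$ not dividing $N_{K/\mathbb{Q}}(\alpha)$, and for any maximal ideal $\mathfrak{p} \subset R$ above $p$ the reduction $\bar{\rho}_g \colon G \to \SL(n_g, R/\mathfrak{p})$ is a homomorphism onto a finite group of order at most $|R/\mathfrak{p}|^{n_g^2} \le p^{[K:\mathbb{Q}]\, n_g^2} \preceq n^{n^2}$, in which $\alpha \notin \mathfrak{p}$ guarantees $\Tr(\bar{\rho}_g(g)) \neq \Tr(\bar{\rho}_g(h))$ and hence the non-conjugacy of $\bar{\rho}_g(g)$ and $\bar{\rho}_g(h)$. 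Passing to the quotient $G / \ker(\bar{\rho}_g)$ gives the desired bound $\Conj_G(n) \preceq n^{n^2}$. The main obstacle is the linear dimension bound $n_g \preceq \|g\|_S$: the hypothesis only asserts existence of a trace-distinguishing $\rho_g$, without any control on its size, so the geometric substance of the argument lies entirely in the explicit construction of trace-separating representations whose dimension grows only linearly in the word length of $g$, while the arithmetic reduction is a quantitative Mal'tsev-style argument over a fixed order.
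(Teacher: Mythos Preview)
Your proposal is correct and follows essentially the same approach as the paper. The paper is a survey and does not give a self-contained proof of this theorem; it only provides the sketch immediately preceding the statement, summarising the argument of Lawton, Louder, and McReynolds \cite{LLM}: trace-distinguishing representations $\rho_g \colon G \to \SL(n_g,\C)$ over a ring $R$, reduction to a finite quotient field $F$ of $R$ in which the trace difference survives, and the estimate of $n_g$ in terms of $\|g\|_S$. Your write-up expands exactly this outline, correctly isolating the linear bound $n_g \preceq \|g\|_S$ as the substantive geometric input and treating the arithmetic reduction via heights and the Prime Number Theorem as routine; this matches both the paper's sketch and the structure of the original \cite{LLM} argument.
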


It is unclear how far this bound is from being sharp. The only known lower bound is given by the residual finiteness depth function $\RF_G$, so in particular for the free group the best known lower bound follows from Section \ref{sec:freeRF}.

\begin{question}
Give new lower bounds for $\Conj_G(n)$ where $G$ is a surface group or a free group of finite rank. 
\end{question}

\subsection{Giving upper and lower bounds for RAAGs}
We know by Minasyan \cite{cs_raag_minasyan} that RAAGs are hereditarily conjugacy separable, that is all finite index subgroups of RAAG are conjugacy separable. Other than the nonabelian free groups and free abelian groups, no work has been done on the study of the asymptotic behaviour on the conjugacy separability of a RAAG. Therefore, one may be interested in the study of asymptotic conjugacy separability for these groups. 
\begin{question}
Let $G$ be a finite index subgroup of general right angled Artin group. Provide asymptotic upper and lower bounds for $\Conj_{G}(n).$
\end{question}

Moreover, it would be interesting to relate the asymptotic behaviour of conjugacy separability to the the geometric properties defining graph of the RAAG.
\begin{question}
Let $\Gamma$ be a connected simplicial graph, and let $G(\Gamma)$ be the associated right angled Artin group. Relate the asymptotic behaviour of $\Conj_{G(\Gamma)}(n)$ to the geometric properties of $\Gamma.$
\end{question}

\subsection{Fundamental groups of closed orientable $3$-manifolds}
It was demonstrated by Hamilton, Wilton, and Zalesskii in \cite{compact} that fundamental groups of closed orientable $3$-manifolds are conjugacy separable. However, there are no effective bounds so far, so it would be interesting to study the asymptotic behaviour of the conjugacy separability depth function for this class of groups and relate it to the pieces in the JSJ decomposition of the manifold.
\begin{question}
Let $M$ be a closed orientable $3$-manifold. Find asymptotic bounds for $\Conj_{\pi_1(M)}(n)$. 
\end{question}

One interesting avenue of study is when the closed $3$-manifolds admits a specific geometry. One may be interested also in relating conjugacy separability to factors in the JSJ decomposition of a general closed orientable $3$-manifold.
\begin{question}
Let $M$ be a closed orientable $3$-manifold. If $M$ admits one of the $8$ Thurston geometries, can one relate the asymptotic behaviour of $\Conj_{\pi_1(M)}(n)$ to the geometry it admits? More generally, how does the asymptotic behaviour of $\Conj_{\pi_1(M)}(n)$ relate to the asymptotic behaviour of conjugacy separability of the pieces that show up the in the JSJ decomposition of $M$?
\end{question}

\subsection{Grigorchuk's group and Gupta-Sidki $p$-groups}
It is known that Grigorchuk's group and the Gupta-Sidki $p$-groups for $p \neq 2$ are conjugacy separable by Gupta and Sidki\cite{gupta_sidki}. While the residual finiteness depth function has been for studied and almost completely characterised for Grigorchuk's group and more generally for a large class of branch groups, the upper bounds and lower bounds for asymptotic behaviour of conjugacy separability is unknown. We have a straight forward lower bound for conjugacy separability for these group coming from the lower bounds for asymptotic residual finiteness. Other than that, little is known. Hence, we have the following question.
\begin{question}
Let $G$ be Grigorchuck's' group or one of the Gupta-Sidki $p$-groups for $p \neq 2$. Provide asymptotic upper bounds for $\Conj_G(n)$. Furthermore, provide asymptotic lower bounds not coming from the asymptotic behaviour of residual finiteness.
\end{question}

\subsection{Computability of the Conjugacy Problem and Conjugacy Separability}
As mentioned by Theorem \ref{horrible_rf}, the complexity of residual finiteness can be arbitrarily high whereas the complexity of the word problem is at most polynomial. Therefore, it would be interesting to see if there is a similar result that can be found for conjugacy separability as seen in the following question.
\begin{question}
Let $f$ be an nondecreasing recursive function $f$. Does there exist a finitely presented conjugacy separable group $G$ where $f(n) \preceq \Conj_G(n)$ but where the conjugacy problem can be solved in polynomial time?
\end{question}

It is well known that there exist finitely presented groups which have  simple solutions to the word problem and arbitrarily complex solutions to the conjugacy problem. One may ask then if there exists finitely presented conjugacy separable groups where the asymptotic behaviour of residual finiteness is much simpler than the asymptotic behaviour of conjugacy separability.
\begin{question}
Let $f$ be a nondecreasing recursive function. Does there exist a finitely presented conjugacy separable group $G$ where $\RF_G(n) \preceq n^d$ for some natural number $d$ and where $f(n) \preceq \Conj_G(n)?$
\end{question}

We finish this subsection by asking what kinds of functions can arise as the asymptotic behaviour of conjugacy behaviour such as intermediate growth. Hence, we have this modest last question.
\begin{question}
Does there exist a finitely presented conjugacy separable group where such that $n^d \preceq \Conj_G(n)$ for all natural numbers $d$ and where $\Conj_G(n)$ has asymptotic complexity strictly less than that of $2^n?$
\end{question}

\subsection{Applications to residual finiteness of outer automorphism groups}
In this subsection  we combine known results to give a first known upper bound on residual finiteness depth function for $\Out(F_k)$ where $k>2$. First, we prove a following general lemma.
\begin{lemma}
\label{lemma:Out_upper}
Let $G$ be a finitely generated conjugacy separable group and suppose that $\Aut(G)$ is also finitely generated.
Suppose that there are functions $a,b,c,d,e \colon \mathbb{N} \to \mathbb{N}$ such that
\begin{itemize}
    \item[(i)] $|\Aut(Q)| \preceq a(n)$ whenever $Q$ is a finite quotient of $G$ of size at most $n$, ;
    \item[(ii)] $\mathop{nsub}_G(n) \leq b(n)$, where $\mathop{nsub}_G(n)$ is the number of normal subgroups of $G$ of index $n$;
    \item[(ii)] $\Conj_G(n) \preceq c(n)$;
    \item[(iv)] $\|\alpha(w)\| \preceq d(\|w\|)$ for all $w \in G$, whenever $\alpha \in \Ball_{\Aut(G)}(n)$;
    \item[(v)] for every $\alpha \in \Ball_{\Aut(G)}(n)\setminus\Inn(G)$ there is $w \in G$ such that $\alpha(w) \not\sim_G w$ and $\|w\| \leq e(n)$.
\end{itemize}
 Then
   \begin{displaymath}
       \RF_{\Out(G)}(n) \preceq a\left( c(d(e(n)))^{b(c(d(e(n))))}\right).
   \end{displaymath}
\end{lemma}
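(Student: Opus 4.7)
The plan is as follows. Given a representative $\alpha \in \Aut(G)$ of an element $[\alpha] \in \Out(G) \setminus \{1\}$ with $\|\alpha\|_{\Aut(G)} \leq n$, I want to construct a finite characteristic quotient $G/K$ of $G$ in which the induced automorphism $\bar{\alpha}$ is not inner; then the induced map $\Out(G) \to \Out(G/K)$ detects $[\alpha]$, and hypothesis (i) will bound $|\Out(G/K)| \leq |\Aut(G/K)|$ in terms of $[G:K]$. The main task is therefore to bound $[G:K]$ in terms of $n$ using the remaining four hypotheses in the right order.

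First, hypothesis (v) supplies a witness: some $w \in G$ with $\|w\| \leq e(n)$ satisfying $\alpha(w) \not\sim_G w$. Hypothesis (iv) then controls the length of the other witness, giving $\|\alpha(w)\| \preceq d(\|w\|) \preceq d(e(n))$. Since $w$ and $\alpha(w)$ are non-conjugate elements both lying in $\Ball_G(d(e(n)))$, hypothesis (iii) produces a finite index normal subgroup $N \normleq G$ with
\begin{displaymath}
[G : N] \preceq \Conj_G(d(e(n))) \preceq c(d(e(n)))
\end{displaymath}
such that the images of $w$ and $\alpha(w)$ in $G/N$ are non-conjugate.

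The subtle step is to replace $N$ by a characteristic subgroup $K \subseteq N$ without losing too much. The natural choice is $K = \bigcap_{\beta \in \Aut(G)} \beta(N)$; every $\beta(N)$ is a normal subgroup of $G$ of the same index $[G:N]$, and by hypothesis (ii) there are at most $b([G:N])$ such normal subgroups. Hence the intersection involves at most $b([G:N])$ distinct factors, yielding
\begin{displaymath}
[G : K] \leq [G : N]^{b([G:N])} \preceq c(d(e(n)))^{\,b(c(d(e(n))))}.
\end{displaymath}
Since $K \subseteq N$, the images of $w$ and $\alpha(w)$ are still non-conjugate in the finite characteristic quotient $G/K$, so the induced automorphism $\bar{\alpha} \in \Aut(G/K)$ cannot be inner (an inner automorphism preserves conjugacy classes). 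Thus $[\alpha]$ has nontrivial image in $\Out(G/K)$, and applying hypothesis (i) to the finite quotient $G/K$ yields
\begin{displaymath}
\D_{\Out(G)}(\{1\},[\alpha]) \leq |\Out(G/K)| \leq |\Aut(G/K)| \preceq a\bigl([G:K]\bigr) \preceq a\!\left(c(d(e(n)))^{\,b(c(d(e(n))))}\right),
\end{displaymath}
which is the claimed bound.

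I expect the only genuine obstacle to be the characteristic-core step; every other piece is either directly an application of one hypothesis or a routine estimate of word lengths. The key input that makes the characteristic core manageable is the normal subgroup growth bound (ii), and the proof essentially amounts to composing the five functions $a,b,c,d,e$ in the prescribed order while checking that each successive bound is fed the correct argument.
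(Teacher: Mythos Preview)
Your proof is correct and follows essentially the same route as the paper's: find a witness $w$ via (v), bound $\|\alpha(w)\|$ via (iv), separate the conjugacy classes in a finite quotient via (iii), pass to the characteristic core $K=\bigcap_{\beta\in\Aut(G)}\beta(N)$ and bound its index using (ii), then bound $|\Aut(G/K)|$ via (i). The only step you flag as subtle, the characteristic-core estimate, is handled exactly as in the paper.
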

\begin{proof}
    Suppose that $\alpha \in \Ball_{\Aut(G)}(n)\setminus\Inn(G)$ is given, then we see that there is an element $w \in \Ball_{G}(e(n))$ such that $w$ and $\alpha(w)$ are not conjugate. We see that $\alpha(w) \in \Ball_G(d(e(n)))$. Therefore, there is $N \unlhd G$ such that $wN$ and $\alpha(w)N$ are not conjugate in $G/N$ and $|G/N| \leq c(d(e(n)))$. Let $K$ be the characteristic core of $N$, i.e. 
        $$K = \bigcap_{\gamma \in \Aut(G)} \gamma(N).$$
    As $|G/\gamma(N)| = G/N$ for every $\gamma \in \Aut(G)$, we see that
    $$|G/K| \leq |G/N|^{b(G/N)} \leq c(d(e(n)))^{b(c(d(e(n))))}.$$
    As $K$ is characteristic, the canonical projection $\pi \colon G \to G/K$ induces a homomorphism $\tilde{\pi} \colon \Aut(G) \to \Aut(G/K)$. From the construction of $K$ we see that $\tilde{\pi}(\alpha)(wK)$ is not conjugate to $wK$, therefore $\tilde{\pi}(\alpha) \notin \Inn(G/K)$. I follows that
    \begin{displaymath}
        |\Out(G/K)| \leq |\Aut(G/K)| \leq a\left(|G/K|\right) \leq a\left( c(d(e(n)))^{b(c(d(e(n))))}\right),
    \end{displaymath}
    which concludes our proof.
\end{proof}

We now apply the lemma to the setting of free groups. First, we note that:
\begin{itemize}
    \item[(i)] it can be easily seen that if $Q$ is a finite quotient of $F_k$, then $|\Aut(Q)| \leq |Q|^k$ as every generator must be mapped to one of the elements of $Q$;
    \item[(ii)] it was showed by Newman \cite{newman_counting} that $\mathop{nsub}_{F_k}(n) \preceq n (n!)^{r-1}$;
    \item[(iii)] by Theorem \ref{thm:conjfree} we have that $\Conj_{F_k} \preceq n^{n^2}$;
    \item[(iv)] as $\Aut(F_k)$ is generated by the automorphisms corresponding to the elementary Nielsen transformations, we see that for every $\alpha \in \Ball_{\Aut(F_k)}(n)$ we have that $\|\alpha(w)\| \leq 2^{\|w\|}$;
    \item[(v)] one can show that for every $\alpha \in \Aut(F_k) \setminus \Inn(F_k)$ there exists and element $w \in F_k$ with $\|w\| \leq 2$ such that $w$ and $\alpha(w)$ are not conjugate in $F_k$.
\end{itemize}
Then we immediately get the following upper bound.
\begin{theorem}
\label{theorem:out_Fk_upper}
    \begin{displaymath}
            \RF_{\Out(F_k)}(n) \preceq
            \left(
                \left({2^n}^{2^{2n}}\right)^{{2^n}^{2^{2n}}\left({2^n}^{2^{2n}}!\right)^{k-1}}
            \right)^k.
    \end{displaymath}
\end{theorem}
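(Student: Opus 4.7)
The plan is to apply Lemma \ref{lemma:Out_upper} directly to $G = F_k$, verifying each of the five hypotheses (i)--(v) with the explicit functions listed in the bullet points preceding the theorem, and then unwinding the composite expression appearing in the lemma's conclusion. None of the verifications is difficult individually, so the proof reduces to a careful substitution once the five conditions are established.

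The first three items are essentially bookkeeping. For (i), any homomorphism $F_k \to Q$ is determined by the images of the $k$ free generators, so $|\Aut(Q)| \leq |Q|^k$, giving $a(m) = m^k$. For (ii), I would quote Newman's bound to get $b(m) \preceq m(m!)^{k-1}$. For (iii), Theorem \ref{thm:conjfree} yields $c(m) \preceq m^{m^2}$. For (iv), I would use that $\Aut(F_k)$ is generated by the finite set of elementary Nielsen transformations, each of which at most doubles the length of any word in the chosen basis; hence for every $\alpha \in \Ball_{\Aut(F_k)}(n)$ one has $\|\alpha(w)\| \leq 2^n \cdot \|w\|$, which, applied to words $w$ of bounded length, gives the required $d$.

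The delicate input is (v): we need to show that for every $\alpha \in \Aut(F_k) \setminus \Inn(F_k)$ there is $w \in F_k$ with $\|w\| \leq 2$ such that $\alpha(w) \not\sim_{F_k} w$. The tool I would invoke is the classical structural fact that an automorphism of $F_k$ is inner if and only if it preserves the conjugacy class of each basis generator $x_i$ \emph{and} of each product $x_i x_j$; since the set $\{x_i^{\pm 1},\, x_i^{\pm 1} x_j^{\pm 1}\}$ exhausts all cyclically reduced words of length at most $2$, the required $w$ exists in $\Ball_{F_k}(2)$. This is the main obstacle: it rests on Grossman's theorem that pointwise-inner automorphisms of free groups are inner, plus the finer observation that the conjugacy classes already distinguished by length-$\leq 2$ words suffice to detect non-innerness.

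With the five functions identified, substituting into the conclusion of Lemma \ref{lemma:Out_upper} is mechanical:
\begin{align*}
d(e(n)) &\preceq 2^n,\\
y := c(d(e(n))) &\preceq (2^n)^{(2^n)^2} = {2^n}^{2^{2n}},\\
b(y) &\preceq {2^n}^{2^{2n}} \left({2^n}^{2^{2n}}!\right)^{k-1},
\end{align*}
and then applying $a(\cdot) = (\cdot)^k$ to $y^{b(y)}$ produces precisely the stated upper bound on $\RF_{\Out(F_k)}(n)$. Thus once (v) is in place, the remainder of the argument is a purely formal unwinding.
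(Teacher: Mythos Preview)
Your proposal is correct and follows the same approach as the paper: the paper's argument for this theorem is precisely to verify the five hypotheses of Lemma~\ref{lemma:Out_upper} with the functions in the bulleted list and then substitute. Your write-up is in fact more detailed than the paper's on point~(v), where the paper merely asserts the existence of such a $w$ of length at most~$2$, and your correction of~(iv) to $\|\alpha(w)\|\le 2^{n}\|w\|$ rather than $2^{\|w\|}$ is the intended reading.
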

This immediately provides an answer to Question \ref{question:outer}, albeit a slightly unsatisfying one. Hence we pose a new question.
\begin{question}
    Let $F_k$ be the free group of rank $k \geq 3$. Provide nontrivial upper asymptotic bounds for $\RF_{\Out(F_k)}(n)$ better than those given by Theorem \ref{theorem:out_Fk_upper}.
\end{question}

\section{Subgroup separability}
In this section we list the known results on $\RF_{G,H}(n)$ where $H$ is a finitely generated subgroup of a finitely generated group $G$. 

\subsection{Free groups and surface groups}

Let $G$ be a free group of rank $r > 1$ or a surface group of genus $g>1$ containing a finitely generated subgroup $H$. In \cite{Louder_McReynolds_Patel}, Louder, McReynolds, and Patel construct a representation $\rho_H: G \to \GL(V)$ such that the subgroup $\rho_H(H)$ is closed for the subspace topology on $\rho(G)$ induced by the Zariski topology on $\GL(V)$. As a consequence, they find the following result on effective separability.

\begin{theorem}
Let $G$ be a free group of rank $r > 1$ or a surface group of genus $g>1$ containing a finitely generated subgroup $H$, then there exists $d > 0$ such that $$\RF_{G,H}(n) \preceq n^d.$$
\end{theorem}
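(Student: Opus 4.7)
The plan is to invoke the representation $\rho_H \colon G \to \GL(V)$ constructed by Louder, McReynolds, and Patel, for which $\rho_H(H)$ is closed in the subspace Zariski topology on $\rho_H(G)$. After clearing denominators, we may assume $\rho_H$ takes values in $\GL_N(R)$ for some finitely generated commutative ring $R$: this is $\mathbb{Z}$ for free groups via the standard embedding into $\SL_2(\mathbb{Z})$, and a finitely generated subring of a number field for surface groups via a faithful Fuchsian representation. Closedness of $\rho_H(H)$ in $\rho_H(G)$ means there exist polynomials $f_1,\dots,f_s$ in the matrix entries, with coefficients in $R$ of some fixed degree and fixed bit-complexity (depending on $H$ but not on the element we wish to separate), such that
$$\rho_H(H) \;=\; \{\, x \in \rho_H(G) \mid f_1(x) = \dots = f_s(x) = 0\,\}.$$

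First I would fix $g \in G \setminus H$ of word length at most $n$, so $\rho_H(g) \notin \rho_H(H)$ and hence some value $c := f_i(\rho_H(g))$ is a nonzero element of $R$. The central quantitative input is that the entries of $\rho_H(g)$ have \emph{bit-complexity} at most linear in $n$: even though the entries themselves may grow exponentially (each generator acts by a matrix of bounded operator norm), their logarithmic size is linear in $n$. Since the $f_i$ have fixed degree and bounded coefficients, it follows that the bit-complexity of $c$ is polynomial in $n$.

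Next I would find a maximal ideal $\mathfrak{m}$ of $R$ of norm polynomial in $n$ such that $c \not\equiv 0 \pmod{\mathfrak{m}}$. Because the number of prime ideals containing $c$ grows at most logarithmically in the norm of $c$, an application of the Prime Number Theorem (or an effective Chebotar\"ev density theorem when $R$ is a number ring) produces such an $\mathfrak{m}$ with $|R/\mathfrak{m}|$ still polynomial in $n$. Composing $\rho_H$ with reduction modulo $\mathfrak{m}$ then yields a homomorphism $G \to \GL_N(R/\mathfrak{m})$ to a finite group of order at most $|R/\mathfrak{m}|^{N^2}$, and this bound is polynomial in $n$. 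Because every $f_j$ vanishes on $\rho_H(H)$ while $f_i(\rho_H(g)) \not\equiv 0 \pmod{\mathfrak{m}}$, the image of $g$ does not lie in the image of $H$ in this finite quotient, giving $\D_G(H,g) \leq C\, n^{d}$ as required.

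The main obstacle is the technical package pairing (a) the production of a representation $\rho_H$ over a ring $R$ amenable to a reduction-modulo-$\mathfrak{m}$ argument, with (b) control on the bit-complexity of $\rho_H(g)$ linearly in $\|g\|$. For free groups both pieces are essentially Bou-Rabee's original argument via $\SL_2(\mathbb{Z})$. For surface groups, (a) requires the full Zariski-closedness statement of Louder--McReynolds--Patel (not merely LERF), while (b) is straightforward for any linear representation defined over a number ring. A secondary book-keeping issue is tracking how the exponent $d$ depends on $H$, which enters through the representation dimension $N$ and through the degrees and sizes of the polynomials $f_1,\dots,f_s$; once this is absorbed into the constant, the polynomial bound follows from the standard number-theoretic reduction.
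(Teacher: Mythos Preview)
Your proposal is correct and matches the approach the survey attributes to Louder--McReynolds--Patel: the paper does not prove this theorem itself but only records that those authors construct a representation $\rho_H$ with $\rho_H(H)$ Zariski-closed in $\rho_H(G)$, and you have accurately supplied the standard reduction-modulo-primes argument that turns Zariski-closedness plus linear bit-complexity of matrix entries into a polynomial bound on the separating quotient.

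One small caveat: the parenthetical ``via the standard embedding into $\SL_2(\mathbb{Z})$'' and ``via a faithful Fuchsian representation'' is misleading, since a single fixed faithful representation of $G$ will not in general make an arbitrary finitely generated $H$ Zariski-closed; the representation $\rho_H$ genuinely depends on $H$ (built, roughly, from Hall's virtual-retract theorem in the free case and Scott's theorem in the surface case), and its dimension $N$ is what forces the exponent $d$ to depend on $H$. You acknowledge this dependence at the end, so the argument stands, but the phrasing suggests a simpler picture than the actual construction.
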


The only known lower bound is given by the residual finiteness depth function $\RF_{G}(n)$. Note that Hagen and Patel \cite{Hagen_Patel} study a related notion where they search for every subgroup $H$ and $g \notin H$ a subgroup $H_0$ of finite index with $H \subset H_0$ and $g \notin H_0$. The authors show that in this case, a linear upper bound for the index of $H_0$ can be found, but this does not lead to a polynomial upper bound for the normal core of $H_0$. 

\subsection{Nilpotent groups}

In \cite{dere_pengitore_2}, Der\'{e} and Pengitore not only give a bound on the function $\RF_{G,H}(n)$ for $G$ a torsion-free finitely generated nilpotent group, but they also give an estimate on the constants depending on the size of the subgroup $H$. Indeed, if $S$ is a finite generating set for $G$, one can define a norm on finitely generated subgroups $H$ as $\Vert H \Vert_S \leq n$ if and only if $B_{G,S}(n) \cap H$ generates the subgroup $H$. In this way, one could define a new residual finiteness depth function which also takes into account the norm of the group $H$, namely $$\Sub_{G,S}(n) = \max \left\{D_G(H,g) \mid \Vert H \Vert_S \leq n, \Vert g \Vert_S \leq n \right\}.$$
Again, it is a standard argument to show that $\Sub_{G,S}(n)$ does not depend on the generating set for $S$. 

\begin{theorem}
Let $N$ be a torsion-free finitely generated nilpotent group of rank $r$, then $$\log(n) \preceq \RF_{N,H}(n) \preceq \log(n)^r$$ for every subgroup $H$ of $N$. Moreover, there exists $k > 0$ such that $$n \preceq \Sub_N(n) \preceq n^k.$$
\end{theorem}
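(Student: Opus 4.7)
The plan is to exploit the Mal'cev coordinate description of the torsion-free finitely generated nilpotent group $N$. Fix a Mal'cev basis $e_1, \ldots, e_h$ (with $h$ the Hirsch length of $N$) adapted to the upper central series; every $g \in N$ can be uniquely written as $g = e_1^{a_1} \cdots e_h^{a_h}$ with $a_i \in \mathbb{Z}$, and standard estimates relate $\|g\|_S$ to the coordinates polynomially. Together with the standard cofinal family of congruence-type normal subgroups $N_p$ of $N$ with $|N/N_p| \leq p^h$, this coordinate description is the engine for all four inequalities.

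For the upper bound $\RF_{N,H}(n) \preceq \log(n)^r$, I would first reduce to the case where $H$ equals its isolator $\sqrt{H}$ by handling $g \in \sqrt{H} \setminus H$ via the finiteness of $\sqrt{H}/H$ and reducing the other case to an isolated subgroup. For isolated $H$, the image of $g$ in the coset space $N/H$ is captured by a subset of the Mal'cev coordinates, which for $\|g\|_S \leq n$ are bounded by $n^c$ for a constant $c$ depending on $N$. A Prime Number Theorem style argument supplies a prime $p \leq C \log n$ that avoids the finitely many coordinates describing the relation $g \in HN_p$, giving $g \notin HN_p$ with $|N/N_p| \leq (C \log n)^h$. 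For the lower bound $\log(n) \preceq \RF_{N,H}(n)$, the point is to extract an infinite cyclic subgroup $\langle t \rangle$ whose image in a suitable quotient $N/H'$ (with $H' \leq H$ chosen compatible with the coordinates) remains unbounded; then number-theoretic constructions as in the residual finiteness of $\mathbb{Z}$ produce elements $t^k$ of word length $\leq n$ whose separation from $H$ forces quotients of size $\succeq \log n$.

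For the upper bound $\Sub_N(n) \preceq n^k$, the key refinement is that the hypothesis $\|H\|_S \leq n$ forces $H$ to be generated by elements whose Mal'cev coordinates are polynomially bounded in $n$; consequently the sublattice structure describing $H$ in Mal'cev coordinates has bounded complexity in $n$. Rerunning the upper bound argument for $\RF_{N,H}(n)$ with this refined control on $H$ shows that a prime $p$ of size $p \leq C n^d$ (rather than $\log n$) always suffices, yielding a quotient of size $p^h \leq C' n^{dh}$. For the lower bound $n \preceq \Sub_N(n)$, I would pick an element $g \in N$ of infinite order and take $H_n = \langle g^n \rangle$; then $\|H_n\|_S \leq n\|g\|_S + O(1)$ and $g \notin H_n$, but any finite quotient $N/K$ separating $g$ from $H_n$ must have the image of $g$ of order not dividing $n$, forcing $|N/K| \geq n$.

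The main obstacle is the interaction between the Mal'cev coordinate description of $g$ and the sublattice structure of $H$: finding the separating prime requires controlling finitely many explicit polynomials in the coordinates of $g$ together with coefficients coming from a basis of $H$, and giving uniform bounds on the sizes of these polynomials (in terms of $n$ alone for the $\RF$ bound, and in terms of both $\|g\|$ and $\|H\|$ for $\Sub$) is the technical heart of the argument. A secondary obstacle is that the stated lower bound $\log(n) \preceq \RF_{N,H}(n)$ requires some nondegeneracy of $H$ (such as infinite index, since otherwise the depth function is bounded), which must be extracted from the structure theory of isolators and the existence of surjections from $N$ onto $\mathbb{Z}$.
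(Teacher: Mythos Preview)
The paper is a survey and does not contain a proof of this theorem; it simply attributes the result to Der\'{e} and Pengitore \cite{dere_pengitore_2} and states it. So there is no proof in the paper to compare your proposal against. That said, your overall strategy via Mal'cev coordinates, isolators, and congruence-type quotients $N_p$ is the standard machinery for these questions and is almost certainly what is used in the cited source.

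One concrete error: in your lower bound for $\Sub_N$, the claim that separating $g$ from $H_n = \langle g^n \rangle$ forces the image of $g$ to have order not dividing $n$, and hence $|N/K| \geq n$, is false. In $N/K$, if $\bar g$ has order $m$, then $\bar g \in \langle \bar g^{\,n} \rangle$ iff $nj \equiv 1 \pmod m$ has a solution, iff $\gcd(n,m)=1$. So separation only requires $\gcd(n,m)>1$, and the smallest usable quotient has size the smallest prime factor of $n$; for even $n$ a quotient of size $2$ already works. The fix is immediate: restrict to prime $n$. For a prime $p$, taking $g$ a generator and $H_p=\langle g^p\rangle$ gives $\|H_p\|_S \leq p\|g\|_S$, while any separating quotient must have $\bar g$ of order divisible by $p$, hence $|N/K|\geq p$. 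Bertrand's postulate then yields $\Sub_N(n)\succeq n$.

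A second point you already flag but should make precise: the lower bound $\log(n)\preceq \RF_{N,H}(n)$ genuinely fails when $H$ has finite index (the depth function is then bounded), so the statement as written in the survey tacitly assumes $[N:H]=\infty$. Your plan to produce the bound via an infinite cyclic image is correct in spirit, but the reduction should go through a surjection $N \to \mathbb{Z}$ (which exists since $N$ is torsion-free nilpotent and nontrivial) chosen so that the image of $H$ is a proper subgroup; this is exactly where infinite index and the isolator structure are needed.
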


For infinite abelian groups $A$, is it known that $\RF_{A,H}(n) \simeq \log(n)$ if $H$ has infinite index in $A$ (otherwise the function $\RF_{A,H}$ is bounded) and that $\Sub_A(n) \simeq n$. For general nilpotent groups, the exact function is unknown.

\begin{question}
If $N$ is a torsion-free finitely generated nilpotent group with infinite index subgroup $H$, does there exists $d > 0$ such that$$\RF_{N,H}(n) \simeq \log(n)^d?$$ Does there exists $k > 0$ such that $$\Sub_{N}(n) \simeq n^k?$$
\end{question}

A natural next step is consider polycyclic groups. These groups are well known to be subgroup separable and that every subgroup is finite generated. Thus, one can ask if there is a similar split in behaviour between virtually nilpotent groups and virtually polycyclic groups that are not virtually nilpotent in the asymptotic behaviour of separability of subgroup. One can also ask if there is a similar characterisation of the asymptotic behaviour of subgroup separability. Therefore, we have the following question.
\begin{question}
Let $G$ be a virtually polycyclic group that is not virtually nilpotent, and let $H \leq G$ be a subgroup. Show that there exist integers $k_1, k_2$ such that either
$$
\log(n)^{k_1} \preceq \RF_{G,H}(n) \preceq \log(n)^{k_2}
$$
or
$$
n^{k_1} \preceq \RF_{G,H}(n) \preceq n^{k_2}.
$$
Moreover, determine the conditions on the subgroup $H$ so that $\RF_{G,H}(n)$ has polynomial growth. Finally, demonstrate that $\Sub_{G}(n)$ has exponential growth. To be more specific, show that
$$
\Sub_{G}(n) \approx 2^n.
$$
\end{question}

\subsection{Grigorchuk's group and the Gupta-Sidki p-groups}
The last interesting collection of groups which have separable subgroups and satisfy subgroup separability that we want to mention is Grigorchuk's group and the Gupta-Sidki $p$-groups. We have by Grigorchuk and Wilson \cite{grigorchuk_subgroup_separability} that Grigorchuk's group is subgroup separable, and it was demonstrated by Garrido \cite{garrido}, Francoeur and Leeman \cite[Corollary 4.24]{subgroup_induction} that Gupta-Sidki $G_p$ for primes $p$ are subgroup separable. Hence, we have the following questions.

\begin{question}
Let $G$ be the Grigorchuk's group or the Gupta-Sidki $p$-groups, and let $H \leq G.$ Find asymptotic upper and lower bounds for $\RF_{G,H}(n)$. Additionally, find asymptotic upper and lower bounds for $\Sub_{G}(n).$
\end{question}

\bibliographystyle{plain}
\bibliography{bibs}
\end{document}